\newtheorem{theorem}{Theorem}[section]
\newtheorem{lemma}[theorem]{Lemma}
\newtheorem{corollary}[theorem]{Corollary}
\newtheorem{remark}[theorem]{Remark}
\newtheorem{definition}[theorem]{Definition}
\newcommand{\bi}{\begin{itemize}}
\newcommand{\ei}{\end{itemize}}
\newcommand{\ba}{\begin{array}}
\newcommand{\ea}{\end{array}}
\begin{document}

\title{\textbf{TRFD: A derivative-free trust-region method based on finite differences for composite nonsmooth optimization}}


\author{D\^an\^a Davar\thanks{Department of Mathematical Engineering, ICTEAM Institute, Université catholique de Louvain, B-1348 Louvain-la-Neuve, Belgium (dana.davar@uclouvain.be). This author was supported by the French Community of Belgium (FSR program).} 
\and Geovani N. Grapiglia\thanks{Department of Mathematical Engineering, ICTEAM Institute, Université catholique de Louvain, B-1348 Louvain-la-Neuve, Belgium (geovani.grapiglia@uclouvain.be). This author was partially supported by FRS-FNRS, Belgium (Grant CDR J.0081.23).} }

\date{October 11, 2024}

\maketitle

\begin{abstract}
In this work we present TRFD, a derivative-free trust-region method based on finite differences for minimizing composite functions of the form $f(x)=h(F(x))$, where $F$ is a black-box function assumed to have a Lipschitz continuous Jacobian, and $h$ is a known convex Lipschitz function, possibly nonsmooth. The method approximates the Jacobian of $F$ via forward finite differences. We establish an upper bound for the number of evaluations of $F$ that TRFD requires to find an $\epsilon$-approximate stationary point. For L1 and Minimax problems, we show that our complexity bound reduces to $\mathcal{O}(n\epsilon^{-2})$ for specific instances of TRFD, where $n$ is the number of variables of the problem. Assuming that $h$ is monotone and that the components of $F$ are convex, we also establish a worst-case complexity bound, which reduces to $\mathcal{O}(n\epsilon^{-1})$ for Minimax problems. Numerical results are provided to illustrate the relative efficiency of TRFD in comparison with existing derivative-free solvers for composite nonsmooth optimization.
\end{abstract}

\section{Introduction}\label{sec:1}

\subsection{Problem and Contributions}

We are interested in composite optimization problems of the form
\begin{equation}
\text{Minimize}\,\,f(x)\equiv h(F(x))\,\,\,\text{subject to}\,\,\, x\in\Omega,
\label{eq:first}
\end{equation}
where $F:\mathbb{R}^n \to \mathbb{R}^m$ is assumed to be continuously differentiable with Lipschitz continuous Jacobian, $h : \mathbb{R}^m \to \mathbb{R}$ is a Lipschitz convex function (possibly nonsmooth), and $\Omega \subset \mathbb{R}^n$ is a nonempty closed convex set. Specifically, we assume that $F(\,\cdot\,)$ is only accessible through an exact zeroth-order oracle, meaning that the analytical form of $F(\,\cdot\,)$ is unknown, and that for any $x$, all we can compute is the exact function value $F(x)$. This situation occurs when $F(x)$ is obtained as the output of a black-box software or as the result of some simulation. Examples include aerodynamic shape optimization \cite{karbasian}, optimization of cardiovascular geometries \cite{marsden, russ} or tuning of algorithmic parameters \cite{audet}, just to mention a few. Standard first-order methods for solving composite optimization problems (see, e.g., \cite{Fletcher,Yuan,Cartis}) require the computation of the Jacobian matrices of $F(\,\cdot\,)$ at the iterates, which are not readily available when $F(\,\cdot\,)$ is accessible via a zeroth-order oracle. Therefore, in this setting one needs to rely on derivative-free methods \cite{CSV2,AH,LMW}.

One of the main classes of derivative-free methods is the class of model-based trust-region methods (see, e.g., \cite{CSV}). At each iteration, this type of method builds linear or quadratic interpolation models for the components of $F(\,\cdot\,)$, considering carefully chosen points around the current iterate. Then, the corresponding model of the objective $f(\,\cdot\,)$ is approximately minimized subject to a trust-region constraint. If the resulting trial point provides a sufficient decrease in the objective function, the point is accepted as the next iterate and the radius of the trust-region may increase. Otherwise, the trial point is rejected, and the process is repeated with a reduced trust-region radius. For the class of unconstrained composite optimization problems with $h$ being convex and Lipschitz continuous, Grapiglia, Yuan and Yuan \cite{grapiglia2016derivative} proposed a model-based version of the trust-region method of Fletcher \cite{Fletcher}. They proved that their derivative-free method takes at most $\mathcal{O}\left(n^{2}|\log(\epsilon^{-1})|\epsilon^{-2}\right)$ evaluations of $F(\,\cdot\,)$ to find an $\epsilon$-approximate stationary point of $f(\,\cdot\,)$. Considering a wider class of model-based trust-region methods, Garmanjani, Júdice and Vicente \cite{garmanjani2016trust} established an improved evaluation complexity bound of $\mathcal{O}\left(n^{2}\epsilon^{-2}\right)$, also assuming that $h(\,\cdot\,)$ is convex and Lipschitz continuous. For the case in which $h(\,\cdot\,)$ is not necessarily convex, \textit{Manifold Sampling} trust-region methods have been proposed by Larson and Menickelly \cite{LM1,LM2} under the general assumption that 
\begin{equation*}
h(z)\in\left\{h_{j}(z)\,:\,j\in\left\{1,\ldots,q\right\}\right\},\quad\forall z\in\mathbb{R}^{m},
\end{equation*}
where $h_{j}:\mathbb{R}^{m}\to\mathbb{R}$ is a Lipschitz continuous differentiable function, with $\nabla h_{j}(\,\cdot\,)$ also Lipschitz continuous. In particular, the variants of Manifold Sampling recently proposed in \cite{LM2} are currently the state-of-the-art solvers for derivative-free composite optimization problems. 

In recent years, improved evaluation complexity bounds have been obtained for derivative-free methods designed to smooth unconstrained optimization\footnote{Problem (\ref{eq:first}) with $\Omega=\mathbb{R}^{n}$, $m=1$ and $h(z)=z$.}. Specifically, Grapiglia \cite{grapiglia2,grapiglia} proved evaluation complexity bounds of $\mathcal{O}\left(n\epsilon^{-2}\right)$ for qua\-dratic re\-gu\-larization methods with finite-difference gradient approximations. Moreover, for convex problems, a bound of $\mathcal{O}\left(n\epsilon^{-1}\right)$ was also established in \cite{grapiglia}. Motivated by these advances, in the present work we propose a derivative-free trust-region method based on finite differences for composite problems of the form (\ref{eq:first}). At its $k$-$th$ iteration, our new method (called TRFD) approximates the Jacobian matrix of $F$ at $x_{k}$, $J_{F}(x_{k})$, with a matrix $A_{k}$ obtained via forward finite differences. The stepsize $\tau_{k}$ used in the finite differences and the trust-region radius $\Delta_{k}$ are jointly updated in a way that ensures an error bound
\begin{equation*}
    \|J_{F}(x_{k})-A_{k}\|_{2}\leq\mathcal{O}\left(\Delta_{k}\right),\quad\forall k.
\end{equation*}
Assuming that $f(\,\cdot\,)$ is bounded below by $f_{low}$, and denoting by $L_{h,p}$ the Lipschitz constant of $h(\,\cdot\,)$ with respect to a $p$-norm, and by $L_{J}$ the Lipschitz constant of $J_{F}(\,\cdot\,)$ with respect to the Euclidean norm, we show that TRFD requires no more than
\begin{equation}
\mathcal{O}\left(n c_{2,p}(n)^{2}c_{p,2}(m) L_{h,p} L_{J} (f(x_{0})-f_{low})\epsilon^{-2}\right)
\label{eq:bound1}
\end{equation}
evaluations of $F(\,\cdot\,)$ to find an $\epsilon$-approximate stationary point of $f(\,\cdot\,)$ on $\Omega$, where $c_{2,p}(n)$ and $c_{p,2}(m)$ are positive constants such that
\begin{equation*}
    \|x\|_{2}\leq c_{2,p}(n)\|x\|_{p},\quad\forall x\in\mathbb{R}^{n},\quad\text{and}\quad \|z\|_{p}\leq c_{p,2}(m)\|z\|_{2},\quad\forall z\in\mathbb{R}^{m}.
\end{equation*}
For L1 and Minimax problems, which are composite problems defined respectively by $h(z)=\|z\|_{1}$ and $h(z)=\min_{i=1,\ldots,m}\left\{z_{i}\right\}$, we show that the complexity bound (\ref{eq:bound1}) reduces to $\mathcal{O}\left(n\epsilon^{-2}\right)$ for specific instances of TRFD. This represents an improvement with respect to the complexity bound of $\mathcal{O}\left(n^{2}\epsilon^{-2}\right)$ proved in \cite{garmanjani2016trust} in the context of composite nonsmooth optimization. For the case where $h(\,\cdot\,)$ is monotone and the components of $F(\,\cdot\,)$ are convex functions, we also show that TRFD requires no more than
\begin{equation}
\mathcal{O}\left(n c_{2,p}(n)^{2}c_{p,2}(m) L_{h,p} L_{J}\Delta_{*}^{2}\epsilon^{-1}\right)
    \label{eq:bound2}
\end{equation}
evaluations of $F(\,\cdot\,)$ to find an $\epsilon$-approximate minimizer of $f(\,\cdot\,)$ on $\Omega$, where $\Delta_{*}$ is a sufficiently large upper bound on the trust-region radii. For Minimax problems, we show that the bound (\ref{eq:bound2}) reduces to $\mathcal{O}\left(n \epsilon^{-1}\right)$ for specific instances of TRFD. To the best of our knowledge, this is the first time that evaluation complexity bounds with linear dependence on $n$ are obtained for a deterministic DFO method in the context of composite nonsmooth optimization problems of the form (\ref{eq:first}). Finally, we present numerical results that illustrate the relative efficiency of TRFD on L1 and Minimax problems.
\\[0.2cm]
\subsection{Contents}

The paper is organized as follows. In Section \ref{sec:2}, we prove the relevant auxiliary results. In Section \ref{sec:3}, we analyze the evaluation complexity of the new method for nonconvex
and convex problems. Finally, in Section \ref{sec:4}, we report numerical results for L1 and Minimax problems.

\subsection{Notations}

Throughout the paper, given $p\in \mathbb{N}_{\infty}:=\left\{1,2,3,\ldots\right\}\cup\left\{\infty\right\}$, $\|\,\cdot\,\|_p$ denotes the $p$-norm of vectors or matrices (depending on the context); and $\|\,\cdot\,\|_F$ denotes the Frobenius norm. Given $x \in \Omega$, $y\in\mathbb{R}^{n}$ and $r>0$, we consider the sets 
\begin{equation*}
\Omega - \{x\} := \{s\in\mathbb{R}^{n}: x+s \in \Omega\}\quad\text{and}\quad B_{p}[y;r]=\left\{s\in\mathbb{R}^{n}\,:\,\|s-y\|_{p}\leq r\right\}. 
\end{equation*}
In addition, $[A]_j$ denotes the $j$-$th$ column of the matrix $A\in \mathbb{R}^{m\times n}$, while $[Ad]_i$ denotes the $i$-$th$ coordinate of the vector $Ad \in \mathbb{R}^{m}$. 

\section{Assumptions and Auxiliary results}\label{sec:2}

Through the paper, we will consider the following assumptions:
\\
\\
\textbf{A1.} $\Omega \subset \mathbb{R}^n$ is a nonempty closed convex set.
\\
\textbf{A2.} $F: \mathbb{R}^n \to \mathbb{R}^m$ is differentiable and its Jacobian $J_F$ is $L_J$-Lipschitz on $\mathbb{R}^n$ with respect to the Euclidean norm, that is, $$\|J_F(x)-J_F(y)\|_2 \leq L_J\|x-y\|_2, \quad \forall x,y \in \mathbb{R}^n.$$
\\
\textbf{A3.} $h:\mathbb{R}^m \to \mathbb{R}$ is convex and $L_{h,p}$-Lipschitz continuous on $\mathbb{R}^m$ with respect to the $p$-norm, that is, $$|h(z)-h(w)| \leq L_{h,p}\|z-w\|_p, \quad \forall z,w \in \mathbb{R}^m.$$

\begin{remark}
By A2, given $x,y\in\mathbb{R}^{n}$ we have
\begin{equation*}
\|F(y)-F(x)-J_{F}(x)(y-x)\|_{2}\leq\dfrac{L_{J}}{2}\|y-x\|_{2}^{2}.
\end{equation*}
\label{rem:mais1}
\end{remark}

\begin{remark}
Since all norms are equivalent on Euclidean spaces, given \\$p\in\left\{1,2,\ldots,+\infty\right\}$, there exist positive constants $c_{2,p}(n), c_{p,2}(m) \geq 1$ such that
\begin{equation}
\|x\|_{2}\leq c_{2,p}(n)\|x\|_{p},\quad\forall x\in\mathbb{R}^{n},\quad\text{and}\quad\|z\|_{p}\leq c_{p,2}(m)\|z\|_{2},\quad\forall z\in\mathbb{R}^{m}.
\label{eq:norms}
\end{equation}
\label{rem:mais2}
\end{remark}

\noindent The lemma below provides a necessary condition for a solution of \eqref{eq:first}.
\vspace{0.2cm}
\begin{lemma}\label{motiv}
     Suppose that A1-A3 hold. If $x^*$ is a solution of \eqref{eq:first}, then \begin{equation}\label{sol_crit}
        f(x^*) \leq h(F(x^*)+J_F(x^*)s), \quad \forall s \in \Omega - \{x^*\}.
    \end{equation}
\end{lemma}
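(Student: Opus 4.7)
The plan is to derive (\ref{sol_crit}) as a first-order optimality condition for $x^*$ by probing directions $s\in\Omega-\{x^*\}$, taking advantage of the convexity of $\Omega$ (to stay feasible), the quadratic remainder bound for $F$ from Remark \ref{rem:mais1} (to linearize $F$), and the convexity plus Lipschitz continuity of $h$ (to control the resulting error).

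First I would fix an arbitrary $s\in\Omega-\{x^*\}$, so $x^*+s\in\Omega$, and for $t\in(0,1]$ set $x_t:=x^*+ts=(1-t)x^*+t(x^*+s)$, which lies in $\Omega$ by convexity (A1). Since $x^*$ is a solution of \eqref{eq:first}, I have
\begin{equation*}
f(x^*)\le f(x_t)=h(F(x^*+ts)).
\end{equation*}

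Next I would compare $F(x^*+ts)$ to its linearization $F(x^*)+tJ_F(x^*)s$ using Remark \ref{rem:mais1}, which gives $\|F(x^*+ts)-F(x^*)-tJ_F(x^*)s\|_2\le \tfrac{L_J}{2}t^2\|s\|_2^2$. Converting the $2$-norm to the $p$-norm with (\ref{eq:norms}) and applying A3,
\begin{equation*}
h(F(x^*+ts))\le h\bigl(F(x^*)+tJ_F(x^*)s\bigr)+\tfrac{L_{h,p}c_{p,2}(m)L_J}{2}t^2\|s\|_2^2 .
\end{equation*}
Now, writing $F(x^*)+tJ_F(x^*)s=(1-t)F(x^*)+t\bigl(F(x^*)+J_F(x^*)s\bigr)$ and invoking convexity of $h$ (A3),
\begin{equation*}
h\bigl(F(x^*)+tJ_F(x^*)s\bigr)\le (1-t)h(F(x^*))+t\,h\bigl(F(x^*)+J_F(x^*)s\bigr).
\end{equation*}

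Combining the three displayed inequalities yields
\begin{equation*}
f(x^*)\le (1-t)f(x^*)+t\,h\bigl(F(x^*)+J_F(x^*)s\bigr)+\tfrac{L_{h,p}c_{p,2}(m)L_J}{2}t^2\|s\|_2^2,
\end{equation*}
so that $f(x^*)\le h\bigl(F(x^*)+J_F(x^*)s\bigr)+\tfrac{L_{h,p}c_{p,2}(m)L_J}{2}t\|s\|_2^2$ after dividing by $t>0$. Letting $t\downarrow 0$ gives (\ref{sol_crit}). The only part that needs care is separating the two approximation errors, namely the $O(t^2)$ nonlinearity of $F$ (killed by Lipschitzness of $h$ and a norm-equivalence constant) and the $O(t)$ interpolation gap (killed by convexity of $h$); once these are treated independently the limit is immediate.
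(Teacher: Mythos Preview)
Your proof is correct and follows essentially the same approach as the paper's: both combine the quadratic remainder bound for $F$, the Lipschitz continuity of $h$, and the convexity of $h$ to obtain the inequality $f(x^*)\le (1-t)f(x^*)+t\,h(F(x^*)+J_F(x^*)s)+O(t^2)$ along the segment $x^*+ts$. The only difference is cosmetic---the paper argues by contradiction and then minimizes the resulting quadratic in $t$, whereas you divide by $t$ and let $t\downarrow 0$, which is slightly more direct.
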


\begin{proof}
Suppose by contradiction that $f(x^*) > h(F(x^*)+J_F(x^*)\hat{s})$ for some $\hat{s} \in \Omega - \{x^*\}$. Then $\hat{s}\neq 0$ and there exists $\delta>0$ such that
\begin{equation}
h(F(x^{*})+J_{F}(x^{*})\hat{s})<f(x^{*})-\delta
\label{eq:mais1}
\end{equation}
and
\begin{equation}
\delta\leq L_{h,p}c_{p,2}(m)L_{J}\|\hat{s}\|_{2}^{2}.
\label{eq:mais2}
\end{equation}

\noindent Given $\alpha \in [0,1]$, let $\hat{x}(\alpha)=x^*+\alpha\hat{s}$. Then, using assumptions A3 and A2, and Remarks \ref{rem:mais1} and \ref{rem:mais2}, we have
\begin{align*}
    f(\hat{x}(\alpha)) &= h(F(\hat{x}(\alpha)))=\left[h(F(x^{*}+\alpha\hat{s}))-h(F(x^{*})+J_{F}(x^{*})\alpha\hat{s})\right]\\ &\quad+h(F(x^{*})+J_{F}(x^{*})\alpha\hat{s})\\ 
    &\leq |h(F(x^*+\alpha\hat{s})) - h(F(x^*)+J_{F}(x^*)\alpha \hat{s})| + h(F(x^*)+J_F(x^*)\alpha \hat{s}) \\
    &\leq L_{h,p}\|F(x^*+\alpha\hat{s})-F(x^*)-J_{F}(x^*)\alpha \hat{s}\|_{p} \\ &\quad+ h((1-\alpha)F(x^*) +\alpha(F(x^*)+J_F(x^*)\hat{s})) \\
    &\leq L_{h,p} c_{p,2}(m)\|F(x^*+\alpha\hat{s})-F(x^*)-J_{F}(x^*)\alpha \hat{s}\|_{2} \\ &\quad+ (1-\alpha)h(F(x^*)) +\alpha h(F(x^{*})+J_{F}(x^{*})\hat{s}) \\
    &\leq \frac{L_{h,p} c_{p,2}(m)L_J\|\hat{s}\|_{2}^{2}}{2}\alpha^2+(1-\alpha)f(x^*) + \alpha h(F(x^*)+J_F(x^*)\hat{s}).
\end{align*}
Then, by (\ref{eq:mais1}), it follows that
\begin{eqnarray}
f(\hat{x}(\alpha))&<&\frac{L_{h,p} c_{p,2}(m)L_J\|\hat{s}\|_{2}^{2}}{2}\alpha^{2}+(1-\alpha)f(x^{*})+\alpha\left(f(x^{*})-\delta\right)\nonumber\\
&=& \frac{L_{h,p} c_{p,2}(m)L_J\|\hat{s}\|_{2}^{2}}{2}\alpha^{2}+f(x^{*})-\delta\alpha.
\label{eq:mais3}
\end{eqnarray}
Minimizing the right-hand side of the inequality above with respect to $\alpha$, we obtain
\begin{equation*}
    \alpha^* = \frac{\delta}{L_{h,p} c_{p,2}(m)L_J\|\hat{s}\|_{2}^{2}}.
\end{equation*}
It follows from (\ref{eq:mais2}) that $\alpha^{*}\in [0,1]$. Thus, by (\ref{eq:mais3}) we would have 
\begin{equation*}
f(\hat{x}(\alpha^{*}))< f(x^{*})-\dfrac{\delta^{2}}{2L_{h,p}c_{p,2}(m)L_{J}\|\hat{s}\|_{2}^{2}}<f(x^{*}),  
\end{equation*}
which contradicts the assumption that $x^*$ is a solution of \eqref{eq:first}. Therefore, we conclude that \eqref{sol_crit} is true.
\end{proof}

\noindent Lemma \ref{motiv} motivates the following definition of stationarity, which in the unconstrained case corresponds to the definition considered by Yuan \cite{yuan1985superlinear}.
\vspace{0.2cm}
\begin{definition}
    We say that $x^* \in \Omega$ is a stationary point of $f$ on 
$\Omega$ when $x^*$ satisfies condition \eqref{sol_crit}.
\end{definition}
\vspace{0.2cm}
\noindent Given $p\in\mathbb{N}_{\infty}$ and $r>0$, let us define $\psi_{p,r}:\Omega\to\mathbb{R}$ by \begin{equation}\label{psir}
    \psi_{p,r}(x) = \frac{1}{r}\left(h(F(x)) - \min_{s\in \Omega - \{x\} \atop \|s\|_p \leq r} h(F(x)+J_F(x)s)\right).
\end{equation}
\vspace{0.2cm}
\noindent From the definition of $\psi_{p,r}(\,\cdot\,)$, we have the following result.

\begin{lemma}\label{newfirst}
    Suppose that A1-A3 hold and let $\psi_{p,r}(\,\cdot\,)$ be defined by \eqref{psir}. Then, \\ \\ (a) $\psi_{p,r}(x) \geq 0 \quad \forall x \in \Omega$; \\ (b) $\psi_{p,r}(x^{*})=0$ if, and only if, $x^*$ is a stationary point of $f$ on $\Omega$.
\end{lemma}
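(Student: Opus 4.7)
The plan is to handle (a) and the two directions of (b) separately, with convexity of $\Omega$ and $h$ doing the heavy lifting in the forward direction of (b).

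For part (a), I would observe that $s = 0$ is feasible in the minimization defining $\psi_{p,r}(x)$: since $x \in \Omega$, we have $0 \in \Omega - \{x\}$, and trivially $\|0\|_{p} = 0 \leq r$. Hence
\begin{equation*}
\min_{s \in \Omega - \{x\},\,\|s\|_{p}\leq r} h(F(x) + J_{F}(x)s) \leq h(F(x) + J_{F}(x)\cdot 0) = h(F(x)),
\end{equation*}
so the bracketed expression in the definition of $\psi_{p,r}(x)$ is nonnegative. Dividing by $r > 0$ gives (a).

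For the reverse implication of (b), I would argue directly from the definition of stationarity: if $x^{*}$ satisfies \eqref{sol_crit}, then in particular $h(F(x^{*})) \leq h(F(x^{*}) + J_{F}(x^{*})s)$ for every feasible $s$ with $\|s\|_{p} \leq r$, so the minimum equals $h(F(x^{*}))$ and $\psi_{p,r}(x^{*})=0$.

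The forward implication of (b) is the one requiring care, since stationarity demands the inequality for every $s \in \Omega - \{x^{*}\}$, while $\psi_{p,r}(x^{*}) = 0$ only provides information for $s$ inside a norm ball. To bridge this gap I would use a scaling/convexity argument. Given an arbitrary $\hat{s} \in \Omega - \{x^{*}\}$ with $\hat{s}\neq 0$, pick $t \in (0,1]$ small enough that $\|t\hat{s}\|_{p} \leq r$ (for instance $t = \min\{1, r/\|\hat{s}\|_{p}\}$). By convexity of $\Omega$, $t\hat{s} = (1-t)\cdot 0 + t \hat{s}$ lies in $\Omega - \{x^{*}\}$, so this scaled direction is feasible in the minimization. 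The hypothesis $\psi_{p,r}(x^{*}) = 0$ therefore yields
\begin{equation*}
h(F(x^{*})) \;\leq\; h\bigl(F(x^{*}) + J_{F}(x^{*})(t\hat{s})\bigr) \;=\; h\bigl((1-t)F(x^{*}) + t(F(x^{*}) + J_{F}(x^{*})\hat{s})\bigr).
\end{equation*}
Applying convexity of $h$ (assumption A3) to the right-hand side and subtracting $(1-t)h(F(x^{*}))$ from both sides gives $t\,h(F(x^{*})) \leq t\,h(F(x^{*}) + J_{F}(x^{*})\hat{s})$, and dividing by $t > 0$ produces exactly \eqref{sol_crit}. Since $\hat{s}$ was arbitrary, $x^{*}$ is stationary. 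The only subtlety is ensuring $t\hat{s}$ stays in $\Omega - \{x^{*}\}$, which is automatic from convexity of $\Omega$ and the fact that both $0$ and $\hat{s}$ belong to $\Omega - \{x^{*}\}$.
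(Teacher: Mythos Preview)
Your proof is correct and follows essentially the same approach as the paper's: both use $s=0$ for (a), argue directly from the definition of stationarity for one direction of (b), and for the other direction scale an arbitrary $\hat{s}$ into the ball and invoke convexity of $h$. The only cosmetic difference is that the paper separates the cases $\hat{s}\in B_{p}[0;r]$ and $\hat{s}\notin B_{p}[0;r]$, whereas you treat both at once via $t=\min\{1,r/\|\hat{s}\|_{p}\}$; you are also slightly more explicit than the paper in verifying $t\hat{s}\in\Omega-\{x^{*}\}$ from convexity of $\Omega$.
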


\begin{proof}
     Given $x \in \Omega$, we have \begin{align*}
        \min_{s\in \Omega - \{x\} \atop \|s\|_p \leq r} h(F(x)+J_F(x)s) &\leq h(F(x)).
    \end{align*}
    Then, by (\ref{psir}), we have $\psi_{p,r}(x) \geq 0$, and so (a) is true. \\ \\
    To prove (b), let us first assume that $x^{*}\in\Omega$ is a stationary point of $f$. By Definition 2.4, \begin{equation*}
        h(F(x^*)) \leq h(F(x^*)+J_F(x^*)s) \quad \forall s \in \Omega - \{x^*\},
    \end{equation*} 
    and so,
        \begin{align*}
        h(F(x^*)) &\leq \min_{s\in \Omega - \{x^*\} \atop \|s\|_p \leq r} h(F(x^*)+J_F(x^*)s).
    \end{align*} 
    Therefore, in view of (\ref{psir}), we have $\psi_{p,r}(x^*)\leq 0$. Combining this fact with (a), we conclude that $\psi_{p,r}(x^*)=0$. \\ \\
    Now, suppose that $\psi_{p,r}(x^*)=0$. Then, if \fbox{$\tilde{s}\in (\Omega - \{x^*\})\cap B_{p}[0;r]$}, we have 
    \begin{align}
        h(F(x^*)) = \min_{s\in \Omega - \{x^*\} \atop \|s\|_p \leq r} h(F(x^*)+J_F(x^*)s) \leq h(F(x^*)+J_F(x^*)\tilde{s}). \label{eq:case1}
    \end{align}
    On the other hand, suppose that \fbox{$\tilde{s} \in (\Omega - \{x^*\}) \setminus B_{p}[0;r]$}, and define $\gamma=r/\|\tilde{s}\|_{p}$. Notice that $\gamma\in (0,1)$. Then, by (\ref{eq:case1}) and A3 we have
    \begin{align*}
        h(F(x^*)) \leq h(F(x^*)+J_F(x^*)\gamma \tilde{s}) &= h((1-\gamma)F(x^*)+\gamma(F(x^*)+J_F(x^*)\tilde{s})) \\
        &\leq (1-\gamma)h(F(x^*)) + \gamma h(F(x^*)+J_F(x^*)\tilde{s}),
    \end{align*} 
    which implies that \begin{equation}\label{eq:case2}
        h(F(x^*)) \leq h(F(x^*)+J_F(x^*)\tilde{s}).
    \end{equation} As a result, combining \eqref{eq:case1} and \eqref{eq:case2}, we conclude that $x^*$ is a stationary point of $f$ on $\Omega$. Therefore, (b) is also true.
\end{proof}

\noindent In view of Lemma \ref{newfirst}, we will use $\psi_{p,r}(x)$ as a stationarity measure for problem \eqref{eq:first}. 
\vspace{0.2cm}
\begin{remark}
When $\Omega=\mathbb{R}^n$, $m=1$ and $h(z)=z \; \forall z \in \mathbb{R}$, then problem \eqref{eq:first} reduces to the smooth unconstrained problem $\min_{x\in\mathbb{R}^{n}}\,F(x)$, for which $\psi_{2,r}(x)=\|\nabla F(x)\|_{2}$.
\end{remark}
\vspace{0.2cm}
In the context of derivative-free optimization, $J_F(x)$ is unknown. Consequently, $\psi_{p,r}(x)$ is not computable. For a given $x\in \Omega$, our new method will compute a matrix $A \approx J_F(x)$ leading to the following approximate stationarity measure
\begin{align}
    \eta_{p,r}(x;A) := \frac{1}{r}\left(h(F(x)) - \min_{s\in \Omega - \{x\} \atop \|s\|_p \leq r} h(F(x)+As)\right).
    \label{eq:criticality}
\end{align}
The error $|\psi_{p,r}(x)-\eta_{p,r}(x;A)|$ depends on how well $A$ approximates $J_F(x)$. The next lemma provides an error bound when $A$ is computed by forward finite differences.
\vspace{0.2cm}
\begin{lemma}\label{firstlem}
    Suppose that A2 holds. Given $x \in \mathbb{R}^n$ and $\tau >0$, let $A \in \mathbb{R}^{m\times n}$ be defined by \begin{equation}\label{fg}
        [A]_j=\frac{F(x+\tau e_j)-F(x)}{\tau}, \quad j=1, ...,n.
    \end{equation} Then \begin{equation}
        \|A - J_F(x)\|_2 \leq \frac{L_J\sqrt{n}}{2}{\tau}.
        \label{fdap}
    \end{equation}
\end{lemma}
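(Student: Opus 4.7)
The plan is to work column-by-column on the error matrix $A-J_F(x)$, apply the quadratic bound from Remark 2.1 to each column, and then pass from the column-wise Euclidean bounds to a spectral-norm bound via the Frobenius norm.

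First, I would fix $j \in \{1,\ldots,n\}$ and rewrite the $j$-th column of $A-J_F(x)$ as
\begin{equation*}
[A]_{j} - [J_F(x)]_{j} \;=\; \frac{F(x+\tau e_j) - F(x)}{\tau} - J_F(x)e_j \;=\; \frac{1}{\tau}\bigl(F(x+\tau e_j) - F(x) - J_F(x)(\tau e_j)\bigr),
\end{equation*}
where I used that $[J_F(x)]_j = J_F(x)e_j$. Applying Remark 2.1 with $y = x+\tau e_j$ (so $\|y-x\|_2 = \tau$) gives $\|F(x+\tau e_j) - F(x) - J_F(x)(\tau e_j)\|_2 \leq \frac{L_J}{2}\tau^2$, and dividing by $\tau$ yields the column-wise bound $\|[A-J_F(x)]_j\|_2 \leq \frac{L_J \tau}{2}$.

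Second, I would pass from columns to the full matrix. Using the standard inequality $\|M\|_2 \leq \|M\|_F$ together with the identity $\|M\|_F^2 = \sum_{j=1}^n \|[M]_j\|_2^2$, I obtain
\begin{equation*}
\|A - J_F(x)\|_2 \;\leq\; \|A - J_F(x)\|_F \;=\; \left(\sum_{j=1}^{n}\|[A-J_F(x)]_j\|_2^{2}\right)^{1/2} \;\leq\; \left(n \cdot \tfrac{L_J^{2}\tau^{2}}{4}\right)^{1/2} \;=\; \frac{L_J \sqrt{n}}{2}\tau,
\end{equation*}
which is exactly the stated bound (\ref{fdap}).

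There is no real obstacle here: the proof is essentially a two-line application of Remark 2.1 plus the elementary $\|\cdot\|_2 \leq \|\cdot\|_F$ comparison, and the factor $\sqrt{n}$ arises precisely from summing $n$ identical column-wise error estimates under the Frobenius norm. The only point worth stating carefully is the identification $[J_F(x)]_j = J_F(x)e_j$, which ensures that the $j$-th finite difference column is directly compared to the $j$-th column of the true Jacobian.
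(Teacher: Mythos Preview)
Your proposal is correct and follows essentially the same approach as the paper: both bound each column via the quadratic Taylor estimate from Remark~\ref{rem:mais1}, then pass to the spectral norm through the Frobenius norm to pick up the $\sqrt{n}$ factor. There is no substantive difference between the two arguments.
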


\begin{proof} Given $j\in \{1,...,n\}$, it follows from A2 that $$\left\|F(x+\tau e_j) - F(x) - J_F(x)\tau e_j \right\|_2 \leq \frac{L_J}{2}\|\tau e_j\|_2^2 = \frac{L_J}{2}\tau^2.$$ Thus, by \eqref{fg} we have $$\|[A]_j - [J_F(x)]_j\|_2 \leq \frac{L_J}{2}\tau.$$
Consequently,
\begin{equation*}\|A-J_F(x)\|_F^2 = \sum_{j=1}^{n}\|[A]_j-[J_F(x)]_j\|_2^2 \leq n\left(\frac{L_J}{2}\tau\right)^2.\end{equation*} Therefore $$\|A-J_F(x)\|_2 \leq \|A-J_F(x)\|_F \leq \frac{L_J\sqrt{n}}{2}\tau,$$ and so \eqref{fdap} is true.
\end{proof}
\vspace{0.2cm}
\begin{remark}
Definition (\ref{fg}) implies that the $i$-$th$ row of the corresponding matrix $A$ is a forward finite-difference approximation to $\nabla F_{i}(x)$.
\end{remark}
\vspace{0.2cm}
\noindent Using Lemma \ref{firstlem}, we can establish an upper bound for the error $|\psi_{p,r}(x)-\eta_{p,r}(x;A)|$ when $A$ is constructed by forward finite differences.
\vspace{0.2cm}
\begin{lemma}\label{psieta}
    Suppose that A1-A3 hold. Given $x \in \Omega$ and $\tau>0$, let $A\in \mathbb{R}^{m\times n}$ be defined by \eqref{fg}. Then, \begin{equation}\label{eq13}
        \left|\psi_{p,r}(x) - \eta_{p,r}(x;A)\right| \leq \frac{L_{h,p}L_Jc_{p,2}(m)c_{2,p}(n)\sqrt{n}}{2} {\tau}, \quad \forall x \in \Omega.
    \end{equation}
\end{lemma}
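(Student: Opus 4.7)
The plan is to bound the difference $|\psi_{p,r}(x) - \eta_{p,r}(x;A)|$ by the standard ``difference of minima'' trick, then control the integrand pointwise using Lipschitz continuity of $h$, the operator-norm bound from Lemma \ref{firstlem}, and the norm-equivalence constants from Remark \ref{rem:mais2}.

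Concretely, I would start by writing
\[
\psi_{p,r}(x) - \eta_{p,r}(x;A) = \frac{1}{r}\left(\min_{s\in(\Omega-\{x\})\cap B_p[0;r]} h(F(x)+As) - \min_{s\in(\Omega-\{x\})\cap B_p[0;r]} h(F(x)+J_F(x)s)\right),
\]
and invoke the elementary fact that for any two real-valued functions $g_1,g_2$ on a common nonempty domain $S$, one has $\left|\inf_S g_1 - \inf_S g_2\right| \leq \sup_S |g_1-g_2|$. Applied here with $g_1(s)=h(F(x)+As)$ and $g_2(s)=h(F(x)+J_F(x)s)$ on $S=(\Omega-\{x\})\cap B_p[0;r]$, this yields
\[
\left|\psi_{p,r}(x) - \eta_{p,r}(x;A)\right| \leq \frac{1}{r}\sup_{s\in S}\left|h(F(x)+As) - h(F(x)+J_F(x)s)\right|.
\]

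Next, for any feasible $s$, I would use A3 to get $|h(F(x)+As) - h(F(x)+J_F(x)s)| \leq L_{h,p}\,\|(A-J_F(x))s\|_p$, then apply the right-hand inequality of \eqref{eq:norms} to pass from the $p$-norm on $\mathbb{R}^m$ to the Euclidean norm, bound $\|(A-J_F(x))s\|_2 \leq \|A-J_F(x)\|_2\,\|s\|_2$, and finally use the left-hand inequality of \eqref{eq:norms} together with $\|s\|_p\leq r$ to get $\|s\|_2\leq c_{2,p}(n)\,r$. Plugging in the estimate $\|A-J_F(x)\|_2\leq L_J\sqrt{n}\,\tau/2$ from Lemma \ref{firstlem} and dividing by $r$ gives exactly \eqref{eq13}.

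There is essentially no obstacle here: each step is either a standard inequality (difference of infima, Lipschitzness, sub-multiplicativity of the operator norm, norm equivalence) or a direct appeal to a previously established lemma. The only thing to watch is that the feasible set $(\Omega-\{x\})\cap B_p[0;r]$ is nonempty (it contains $s=0$), so the minima in the definitions of $\psi_{p,r}$ and $\eta_{p,r}$ are well-defined and the ``difference of minima'' bound applies cleanly.
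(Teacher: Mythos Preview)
Your proposal is correct and follows essentially the same approach as the paper: the paper picks explicit minimizers $\tilde{s}$ and $\hat{s}$ for each of the two functions and bounds each one-sided difference separately, which is precisely how one proves the ``difference of infima'' inequality you invoke abstractly. The subsequent chain of estimates (Lipschitzness of $h$, norm equivalence via $c_{p,2}(m)$ and $c_{2,p}(n)$, operator-norm bound, and Lemma~\ref{firstlem}) is identical to the paper's.
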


\begin{proof}
By A3, it follows that $s\mapsto h(F(x)+J_{F}(x)s)$ defines a continuous function. Then, by A1 and the Weierstrass Theorem, there exists $\Tilde{s} \in \left(\Omega - \{x\}\right)\cap B_{p}[0;r]$ such that $$\min_{s\in \Omega - \{x\} \atop \|s\|_p \leq r} h(F(x)+J_F(x)s) = h(F(x)+J_F(x)\Tilde{s}).$$ Then, by A3 and \eqref{fdap}, 
\begin{align}
        \psi_{p,r}(x) - \eta_{p,r}(x;A) &= \frac{1}{r}\left[h(F(x))-h(F(x)+J_F(x)\Tilde{s}) - \left(h(F(x))-\min_{s\in \Omega - \{x\} \atop \|s\|_p \leq r} h(F(x)+As)\right)\right] \notag\\
        &= \frac{1}{r}\left[\min_{s\in \Omega - \{x\} \atop \|s\|_p \leq r} h(F(x)+As)-h(F(x)+J_F(x)\Tilde{s})\right] \notag\\
        &\leq \frac{1}{r}\left[h(F(x)+A\Tilde{s})-h(F(x)+J_F(x)\Tilde{s})\right] \notag\\
        &\leq \frac{L_{h,p}}{r}\|(A-J_F(x))\Tilde{s}\|_p \notag\\
        &\leq \frac{L_{h,p}c_{p,2}(m)}{r}\|(A-J_F(x))\Tilde{s}\|_2,\notag \\
        &\leq \frac{L_{h,p}c_{p,2}(m)}{r}\|A-J_F(x)\|_2\|\Tilde{s}\|_2,\notag \\
        &\leq \frac{L_{h,p}c_{p,2}(m)}{r}\|A-J_F(x)\|_2c_{2,p}(n)\|\Tilde{s}\|_p,\notag \\
        &\leq \frac{L_{h,p}L_Jc_{p,2}(m)c_{2,p}(n)\sqrt{n}}{2}\tau \label{lemma2.5}.
    \end{align}
\normalsize
    In a similar way, considering $\hat{s} \in (\Omega - \{x\})\cap B_{p}[0;r]$ such that $$\min_{s\in \Omega - \{x\} \atop \|s\|_p \leq r} h(F(x)+As) = h(F(x)+A\hat{s}),$$ we can show that \begin{equation}\label{lemma2.5other}
        \eta_{p,r}(x;A) - \psi_{p,r}(x) \leq \frac{L_{h,p}L_Jc_{p,2}(m)c_{2,p}(n)\sqrt{n}}{2}\tau.
    \end{equation} Combining \eqref{lemma2.5} and \eqref{lemma2.5other}, we see that (\ref{eq13}) is true.
\end{proof}

\noindent Now, using Lemma \ref{psieta}, we can bound $\eta_{p,r}(x;A)$ from below when $\psi_{p,r}(x)>\epsilon$ and $A$ is defined by (\ref{fg}) with $\tau$ being sufficiently small.
\vspace{0.2cm}
\begin{lemma}\label{lemeta}
    Suppose that A1-A3 hold. Given $x\in \Omega$ and $\tau>0$, let $A$ be defined by \eqref{fg}. Given $r,\epsilon>0$, if $\psi_{p,r}(x)>\epsilon$ and
    \begin{equation}\label{tau_suff}
        \tau \leq \frac{\max\left\{2\eta_{p,r}(x;A), \epsilon\right\}}{L_{h,p}L_Jc_{p,2}(m)c_{2,p}(n)\sqrt{n}},
    \end{equation}
    then $$\eta_{p,r}(x;A) > \frac{\epsilon}{2}.$$
\end{lemma}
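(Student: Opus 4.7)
The plan is to combine the error bound from Lemma \ref{psieta} with the hypothesis $\psi_{p,r}(x) > \epsilon$ to derive a lower bound on $\eta_{p,r}(x;A)$, and then use the prescribed bound on $\tau$ to compare with $\epsilon/2$. Let me abbreviate $K := L_{h,p} L_J c_{p,2}(m) c_{2,p}(n) \sqrt{n}$, so that (\ref{eq13}) reads $|\psi_{p,r}(x) - \eta_{p,r}(x;A)| \leq \tfrac{K}{2}\tau$, and (\ref{tau_suff}) reads $K\tau \leq \max\{2\eta_{p,r}(x;A),\epsilon\}$, i.e. $\tfrac{K}{2}\tau \leq \max\{\eta_{p,r}(x;A),\epsilon/2\}$.

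First I would apply Lemma \ref{psieta} to get
\begin{equation*}
\eta_{p,r}(x;A) \geq \psi_{p,r}(x) - |\psi_{p,r}(x) - \eta_{p,r}(x;A)| > \epsilon - \tfrac{K}{2}\tau,
\end{equation*}
where the strict inequality uses $\psi_{p,r}(x) > \epsilon$. Substituting the upper bound on $\tfrac{K}{2}\tau$ coming from \eqref{tau_suff} then gives
\begin{equation*}
\eta_{p,r}(x;A) > \epsilon - \max\bigl\{\eta_{p,r}(x;A),\, \epsilon/2\bigr\}.
\end{equation*}

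Finally I would split into two cases according to which term attains the maximum. If $\max\{\eta_{p,r}(x;A),\epsilon/2\} = \eta_{p,r}(x;A)$, the inequality becomes $\eta_{p,r}(x;A) > \epsilon - \eta_{p,r}(x;A)$, i.e.\ $2\eta_{p,r}(x;A) > \epsilon$, so $\eta_{p,r}(x;A) > \epsilon/2$. If instead $\max\{\eta_{p,r}(x;A),\epsilon/2\} = \epsilon/2$, then directly $\eta_{p,r}(x;A) > \epsilon - \epsilon/2 = \epsilon/2$. In either case we conclude $\eta_{p,r}(x;A) > \epsilon/2$, as required.

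There is no substantive obstacle here: the proof is essentially a one-line application of Lemma \ref{psieta} combined with the hypothesis and the choice of $\tau$. The only mild care needed is in handling the two branches of the $\max$ correctly so that the strict inequality $\psi_{p,r}(x) > \epsilon$ is preserved all the way to the conclusion; writing the bound as $\tfrac{K}{2}\tau \leq \max\{\eta_{p,r}(x;A),\epsilon/2\}$ from the start keeps the case analysis clean.
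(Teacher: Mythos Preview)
Your proof is correct and follows essentially the same approach as the paper: both combine the error bound from Lemma~\ref{psieta} with the hypothesis $\psi_{p,r}(x)>\epsilon$ and the assumed bound on $\tau$. The only cosmetic difference is that the paper argues by contradiction (assuming $\eta_{p,r}(x;A)\leq\epsilon/2$ and deriving $\psi_{p,r}(x)\leq\epsilon$), while you argue directly via a case split on the $\max$; the underlying computation is identical.
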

\begin{proof}
    From Lemma \ref{psieta} and \eqref{tau_suff}, we have
\begin{align}
    \psi_{p,r}(x) \leq \left|\psi_{p,r}(x)-\eta_{p,r}(x;A)\right| + \eta_{p,r}(x;A) \leq \max\left\{\eta_{p,r}(x;A), \frac{\epsilon}{2}\right\} + \eta_{p,r}(x;A).
    \label{eq:eta_eps}
\end{align}
Suppose that $\eta_{p,r}(x;A) \leq \frac{\epsilon}{2}$. Then from \eqref{eq:eta_eps} we would have $\psi_{p,r}(x) \leq \epsilon$, contradicting the assumption that $\psi_{p,r}(x) > \epsilon$. Therefore, we must have $\eta_{p,r}(x;A) >\epsilon/2$. 
\end{proof}

\noindent Given $0<r_1\leq r_2$, the next lemma establishes the relation between $\eta_{p,r_1}(x;A)$ and $\eta_{p,r_2}(x;A)$ for any given $x\in \Omega$ and $A\in \mathbb{R}^{m\times n}$.
\vspace{0.2cm}
\begin{lemma}
\label{lem:last}
Suppose that A1 and A3 hold. Given $x\in \Omega$, $A\in \mathbb{R}^{m\times n}$ and $0<r_1\leq r_2$, we have $$\eta_{p,r_1}(x;A)\geq \eta_{p,r_2}(x;A).$$
\end{lemma}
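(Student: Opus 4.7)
The plan is to exploit convexity of both $h$ and $\Omega-\{x\}$ via a scaling argument: shrinking a feasible step for the larger radius $r_2$ by the factor $r_1/r_2$ produces a feasible step for the smaller radius $r_1$, and the convex combination with $0$ controls how much the objective value of the inner minimization can grow.

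First, let $s_2 \in (\Omega-\{x\})\cap B_p[0;r_2]$ attain the minimum in the definition of $\eta_{p,r_2}(x;A)$; such a minimizer exists by Weierstrass, as already observed in the proof of Lemma \ref{psieta}. Set $\lambda = r_1/r_2 \in (0,1]$ and $\hat{s} = \lambda s_2$. Since $x \in \Omega$, the set $\Omega - \{x\}$ is convex and contains $0$, so $\hat{s} = \lambda s_2 + (1-\lambda)\cdot 0 \in \Omega - \{x\}$; moreover $\|\hat{s}\|_p = \lambda\|s_2\|_p \leq \lambda r_2 = r_1$, hence $\hat{s}$ is feasible for the $r_1$-problem.

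Second, using convexity of $h$ (assumption A3), I would compute
\begin{align*}
h(F(x)+A\hat{s}) &= h\bigl(\lambda(F(x)+As_2) + (1-\lambda)F(x)\bigr) \\
&\leq \lambda\, h(F(x)+As_2) + (1-\lambda)\,h(F(x)).
\end{align*}
Taking the $r_1$-minimum on the left and rearranging yields
\begin{equation*}
h(F(x)) - \min_{s\in \Omega - \{x\},\,\|s\|_p \leq r_1} h(F(x)+As) \;\geq\; \lambda\bigl[h(F(x)) - h(F(x)+As_2)\bigr].
\end{equation*}
Dividing by $r_1$ and recalling that $\lambda/r_1 = 1/r_2$ and that $h(F(x)+As_2)$ equals the $r_2$-minimum, the right-hand side becomes exactly $\eta_{p,r_2}(x;A)$, giving the desired inequality $\eta_{p,r_1}(x;A)\geq \eta_{p,r_2}(x;A)$.

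There is no real obstacle here; the only point to be careful about is the feasibility of $\hat{s}$, which is why I invoke convexity of $\Omega-\{x\}$ together with $0\in\Omega-\{x\}$ rather than just shrinking naively. Assumption A2 on the Jacobian is not needed, matching the hypotheses of the lemma statement (only A1 and A3).
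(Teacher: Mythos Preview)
Your proof is correct and follows essentially the same scaling argument as the paper: take the minimizer $s^*$ for the $r_2$-problem, scale it by $r_1/r_2$ to obtain a feasible point for the $r_1$-problem, and use convexity of $h$ to control the resulting model value. Your version is slightly more explicit in justifying that the scaled step remains in $\Omega-\{x\}$ (via convexity of $\Omega-\{x\}$ and $0\in\Omega-\{x\}$), a point the paper leaves implicit.
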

\begin{proof} 
Denote \fbox{$\alpha=\frac{r_1}{r_2}$} and let $s^{*}\in \left(\Omega-\left\{x\right\}\right)\cap B_{p}[0;r_2]$ be such that
\begin{equation*}
\min_{s\in \Omega - \{x\} \atop \|s\|_p \leq r_2} h(F(x)+As)=h(F(x)+As^{*}).
\end{equation*}
Then, $\|\alpha s^*\| \leq r_1$ and so $$\min_{s\in \Omega - \{x\} \atop \|s\|_p \leq r_1} h(F(x)+As) \leq h(F(x)+\alpha As^*),$$ which implies that \small \begin{equation}\label{psialpha}
    \eta_{p,r_1}(x;A) = \frac{1}{r_1}\left(h(F(x)) - \min_{s\in \Omega - \{x\} \atop \|s\|_p \leq r_1} h(F(x)+As)\right) \geq \frac{1}{r_1}\left(h(F(x)) - h(F(x) + \alpha As^*)\right).
\end{equation}
\normalsize
On the other hand, using the convexity of $h$ (from A3) and the fact that $\alpha \in (0,1]$, we also have \begin{align}
    h(F(x)+\alpha As^*) &= h((1-\alpha)F(x)+\alpha(F(x)+As^*)) \notag\\
    &\leq (1-\alpha)h(F(x)) + \alpha h(F(x)+As^*) \notag\\
    &= h(F(x)) + \alpha [h(F(x)+As^*) - h(F(x))]\label{conv_h}.
\end{align}
Finally, combining \eqref{psialpha}, \eqref{conv_h} and the definition of $\alpha$, we obtain \small $$ \eta_{p,r_1}(x;A) \geq \frac{\alpha}{r_1} [h(F(x))-h(F(x)+As^*)] = \frac{1}{r_2} [h(F(x))-h(F(x)+As^*)] = \eta_{p,r_2}(x;A),$$ \normalsize and the proof is complete.
\end{proof}

\section{Trust-region method based on finite differences}\label{sec:3}

Leveraging the auxil\-iary results established in the previous section, we propose a derivative-free \textbf{T}rust-\textbf{R}egion Method based on \textbf{F}inite-\textbf{D}ifference Jacobian approximations, which we refer to as TRFD.
\begin{mdframed}
\noindent\textbf{Algorithm 1:} \textbf{TRFD} 
\\[0.2cm]
\noindent\textbf{Step 0.} Given $x_0 \in \Omega$, $\epsilon > 0$, $\sigma > 0$, $\alpha \in (0, 1)$, $\theta\in (0,1]$ and the Lipschitz constant $L_{h,p}$ of $h(\,\cdot\,)$, define
\begin{equation*}
\tau_{0}=\dfrac{\epsilon}{L_{h,p}\sigma c_{p,2}(m)c_{2,p}(n)\sqrt{n}},
\end{equation*}
where $c_{p,2}(m)$ and $c_{2,p}(n)$ satisfy (\ref{eq:norms}). Choose $\Delta_{0}$ and $\Delta_{*}$ such that $\tau_{0}\sqrt{n}\leq\Delta_{0}\leq\Delta_{*}$ and set $k:=0$.
\\[0.2cm]
\noindent\textbf{Step 1.} Construct $A_{k} \in \mathbb{R}^{m\times n}$ with \begin{equation*}
    [A_{k}]_j = \frac{F(x_k+\tau_{k}e_j)-F(x_k)}{\tau_{k}}, \quad j=1,...,n.
\end{equation*}
and compute $\eta_{p,\Delta_{*}}(x_{k};A_{k})$ defined in (\ref{eq:criticality}).
\\[0.2cm]
\noindent\textbf{Step 2.} If $\eta_{p,\Delta_{*}}(x_{k};A_{k})\geq\epsilon/2$, go to Step 3. Otherwise, define $x_{k+1}=x_{k}$, $\Delta_{k+1}=\Delta_{k}$, $\tau_{k+1}=\frac{1}{2}\tau_{k}$, set $k:=k+1$ and go to Step 1.
\\[0.2cm]
\noindent\textbf{Step 3} Let $d_{k}^{*}$ be a solution of the trust-region subproblem 
\begin{equation}
\begin{aligned}
\min_{d\in\mathbb{R}^{n}} \quad & h(F(x_k)+A_{k}d)\\
\textrm{s.t.} \quad & \|d\|_p \leq \Delta_{k}\\
& x_k+d \in \Omega.
\end{aligned}
\label{eq:subproblem}
\end{equation}
Compute $d_{k}\in B_{p}[0;\Delta_{k}]\cap\left(\Omega-\left\{x_{k}\right\}\right)$ such that
\begin{equation}\label{sdcdf}
    h(F(x_k))-h(F(x_k)+A_{k}d_{k}) \geq \theta\left[h(F(x_{k}))-h(F(x_{k})+A_{k}d_{k}^{*})\right].
    \end{equation}
\noindent\textbf{Step 4.} Compute \begin{equation}\label{ratio_generalset}
    \rho_{k}=\frac{h(F(x_k))-h(F(x_k+d_{k}))}{h(F(x_k))-h(F(x_k)+A_{k}d_{k})}.
\end{equation} 
If $\rho_{k}\geq\alpha$, define $x_{k+1}=x_{k}+d_{k}$, $\Delta_{k+1}=\min\left\{2\Delta_{k},\Delta_{*}\right\}$, $\tau_{k+1}=\tau_{k}$, set $k:=k+1$ and go to Step 1. 
\\[0.2cm]
\noindent\textbf{Step 5} Set $x_{k+1}=x_{k}$, $\Delta_{k+1}=\frac{1}{2}\Delta_{k}$. If $\tau_{k}\sqrt{n}\leq\Delta_{k+1}$, define $\tau_{k+1}=\tau_{k}$, $A_{k+1}=A_{k}$, set $k:=k+1$ and go to Step 3. Otherwise, define $\tau_{k+1}=\frac{1}{2}\tau_{k}$, set $k:=k+1$ and go to Step 1.
\end{mdframed}
\vspace{0.3cm}
In TRFD, we have four types of iterations:
\vspace{0.2cm}
\begin{enumerate}
\item\textbf{Unsuccessful iterations of type I} ($\mathcal{U}^{(1)}$): those where $\eta_{p,\Delta_{*}}(x_{k};A_{k})<\epsilon/2$.
\item\textbf{Successful iterations} ($\mathcal{S}$): those where $\eta_{p,\Delta_{*}}(x_{k};A_{k})\geq\epsilon/2$ and $\rho_{k}\geq\alpha$.
\item\textbf{Unsuccessful iterations of type II} ($\mathcal{U}^{(2)}$): those where $\eta_{p,\Delta_{*}}(x_{k};A_{k})\geq\epsilon/2$, $\rho_{k}<\alpha$, and $\tau_{k}\sqrt{n}\leq\Delta_{k+1}$.
\item\textbf{Unsuccessful iterations of type III} ($\mathcal{U}^{(3)}$): those where $\eta_{p,\Delta_{*}}(x_{k};A_{k})\geq\epsilon/2$, $\rho_{k}<\alpha$, and $\tau_{k}\sqrt{n}>\Delta_{k+1}$.
\end{enumerate}
\vspace{0.2cm}
\noindent The lemma below shows that the finite-difference stepsize $\tau_{k}$ is always bounded from above by $\Delta_{k}/\sqrt{n}$.
\vspace{0.2cm}
\begin{lemma}
\label{lem:3.1}
Given $T\geq 1$, let $\left\{\tau_{k}\right\}_{k=0}^{T}$ and $\left\{\Delta_{k}\right\}_{k=0}^{T}$ be generated by TRFD.
Then
\begin{equation}
\tau_{k}\sqrt{n}\leq\Delta_{k},
\label{eq:3.1}
\end{equation}
\text{for} $k=0,\ldots,T$.
\end{lemma}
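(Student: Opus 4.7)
The plan is a straightforward induction on $k$, leveraging the case distinction between the four iteration types described right before the lemma.

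For the base case $k=0$, the bound $\tau_0\sqrt{n}\leq\Delta_0$ is built into Step 0 of the algorithm, so nothing is to prove. For the inductive step, I would assume $\tau_k\sqrt{n}\leq\Delta_k$ and check each of the four possible updates that the algorithm can perform to pass from $(\tau_k,\Delta_k)$ to $(\tau_{k+1},\Delta_{k+1})$.

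If the iteration is of type $\U^{(1)}$ (Step 2 branch), then $\Delta_{k+1}=\Delta_k$ and $\tau_{k+1}=\tau_k/2$, so the bound is trivially preserved (it only improves). If the iteration is successful ($\mathcal{S}$), then $\tau_{k+1}=\tau_k$ and $\Delta_{k+1}=\min\{2\Delta_k,\Delta_*\}\geq\Delta_k$ (using the side fact, easily seen from Step 0 and the Step 4 update, that $\Delta_k\leq\Delta_*$ holds throughout), so again the bound is preserved. For type $\U^{(2)}$ (Step 5, first branch), the algorithm sets $\tau_{k+1}=\tau_k$ exactly when the test $\tau_k\sqrt{n}\leq\Delta_{k+1}=\Delta_k/2$ is met, so the bound holds by construction. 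For type $\U^{(3)}$ (Step 5, second branch), we have $\tau_{k+1}=\tau_k/2$ and $\Delta_{k+1}=\Delta_k/2$, so
\begin{equation*}
\tau_{k+1}\sqrt{n}=\tfrac{1}{2}\tau_k\sqrt{n}\leq\tfrac{1}{2}\Delta_k=\Delta_{k+1}
\end{equation*}
by the induction hypothesis.

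There is no real obstacle: the lemma is essentially a bookkeeping statement verifying that each branch of the algorithm has been designed to preserve the invariant. The only mild subtlety is remembering to record, as part of the induction (or as a separate one-line observation), that $\Delta_k\leq\Delta_*$ holds at every iteration, which is needed to conclude $\Delta_k\leq\min\{2\Delta_k,\Delta_*\}$ in the successful case.
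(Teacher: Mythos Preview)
Your proposal is correct and follows exactly the same approach as the paper's proof: induction on $k$ with the base case given by Step~0 and the inductive step handled by a case analysis over the four iteration types $\mathcal{U}^{(1)}$, $\mathcal{S}$, $\mathcal{U}^{(2)}$, $\mathcal{U}^{(3)}$. Your extra remark that $\Delta_k\leq\Delta_*$ throughout is a nice clarification for the successful case; the paper simply asserts $\Delta_{k+1}\geq\Delta_k$ there without spelling this out.
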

\begin{proof}
We will prove this result by induction over $k$. In view of the choice of $\Delta_{0}$ at Step 0 of TRFD, we see that (\ref{eq:3.1}) is true for $k=0$. Assuming that (\ref{eq:3.1}) is true for some $k\in\left\{0,\ldots,T-1\right\}$, we will show that it is also true for $k+1$. Considering our classification of iterations, we have four possible cases.
\\[0.2cm]
\noindent\textbf{Case I:} $k\in\mathcal{U}^{(1)}$.
\\[0.2cm]
\noindent In this case, by Step 2 of TRFD, we have $\tau_{k+1}=\frac{1}{2}\tau_{k}$ and $\Delta_{k+1}=\Delta_{k}$. Thus, by the induction assumption, 
\begin{equation*}
\tau_{k+1}\sqrt{n}=\frac{1}{2}\tau_{k}\sqrt{n}<\tau_{k}\sqrt{n}\leq\Delta_{k}=\Delta_{k+1},
\end{equation*}
that is, (\ref{eq:3.1}) holds for $k+1$.
\\[0.2cm]
\noindent\textbf{Case II:} $k\in\mathcal{S}$.
\\[0.2cm]
In this case, by Step 4 of TRFD, we have $\tau_{k+1}=\tau_{k}$ and $\Delta_{k+1}\geq\Delta_{k}$. Thus, by the induction assumption,
\begin{equation*}
\tau_{k+1}\sqrt{n}=\tau_{k}\sqrt{n}\leq\Delta_{k}\leq\Delta_{k+1},
\end{equation*}
which means that (\ref{eq:3.1}) is true for $k+1$.
\\[0.2cm]
\noindent\textbf{Case III:} $k\in\mathcal{U}^{(2)}$
\\[0.2cm]
In this case, by Step 5 of TRFD, we have $\tau_{k}\sqrt{n}\leq\Delta_{k+1}$, and $\tau_{k+1}=\tau_{k}$. Thus
\begin{equation*}
\tau_{k+1}\sqrt{n}=\tau_{k}\sqrt{n}\leq\Delta_{k+1},
\end{equation*}
that is, (\ref{eq:3.1}) is true for $k+1$.
\\[0.2cm]
\noindent\textbf{Case IV:} $k\in\mathcal{U}^{(3)}$.
\\[0.2cm]
In this case, by Step 5 of TRFD we have $\tau_{k+1}=\frac{1}{2}\tau_{k}$ and $\Delta_{k+1}=\frac{1}{2}\Delta_{k}$. Thus, by the induction assumption, 
\begin{equation*}
\tau_{k+1}\sqrt{n}=\frac{1}{2}\tau_{k}\sqrt{n}\leq\frac{1}{2}\Delta_{k}=\Delta_{k+1},
\end{equation*}
and so (\ref{eq:3.1}) is true for $k+1$, which concludes the proof. 
\end{proof}
\vspace{0.2cm}
\noindent In view of Lemmas \ref{firstlem} and \ref{lem:3.1}, the matrices $A_{k}$ in TRFD satisfy
\begin{equation*}
    \|J_{F}(x_{k})-A_{k}\|_{2}\leq\dfrac{L_{J}}{2}\Delta_{k}\quad\forall k.
\end{equation*}
Thanks to this error bound, we can derive the following sufficient condition for an iteration to be successful.
\vspace{0.2cm}
\begin{lemma}
\label{lem:3.2}
Suppose that A1-A3 hold. If $\psi_{p,\Delta_{*}}(x_{k})>\epsilon$ and 
\begin{equation}
\Delta_{k}\leq\dfrac{(1-\alpha)\theta\eta_{p,\Delta_{*}}(x_{k};A_{k})}{L_{h,p}L_{J}c_{p,2}(m)c_{2,p}(n)^{2}},
\label{eq:3.2}
\end{equation}
then $k\in\mathcal{S}$.
\end{lemma}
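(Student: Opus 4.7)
The proof splits into two pieces matching the definition of a successful iteration: the criticality test at Step 2 must pass (i.e.\ $\eta_{p,\Delta_{*}}(x_{k};A_{k}) \geq \epsilon/2$) and the ratio test at Step 4 must pass (i.e.\ $\rho_{k}\geq \alpha$).

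\textbf{Step 2 passes.} My plan is to invoke Lemma \ref{lemeta} at $x=x_{k}$, $r=\Delta_{*}$, $A=A_{k}$. Its required smallness of $\tau_k$ will be checked by chaining Lemma \ref{lem:3.1}, which gives $\tau_{k}\leq\Delta_{k}/\sqrt{n}$, with the standing hypothesis \eqref{eq:3.2}, and then absorbing the factor $(1-\alpha)\theta/c_{2,p}(n)$, which is at most $1$ since $\alpha\in(0,1)$, $\theta\in(0,1]$ and $c_{2,p}(n)\geq 1$. This delivers $\tau_{k}\leq \eta_{p,\Delta_{*}}(x_{k};A_{k})/(L_{h,p}L_{J}c_{p,2}(m)c_{2,p}(n)\sqrt{n})$, which is below the threshold in \eqref{tau_suff}; combined with $\psi_{p,\Delta_{*}}(x_{k})>\epsilon$, Lemma \ref{lemeta} yields $\eta_{p,\Delta_{*}}(x_{k};A_{k})>\epsilon/2$.

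\textbf{Step 4 passes.} The standard trust-region manipulation gives
\begin{equation*}
\rho_{k}-\alpha = (1-\alpha) - \frac{h(F(x_{k})+A_{k}d_{k})-h(F(x_{k}+d_{k}))}{h(F(x_{k}))-h(F(x_{k})+A_{k}d_{k})},
\end{equation*}
so it suffices to produce an upper bound on the modelling error in the numerator and a lower bound on the predicted reduction in the denominator. For the modelling error, I apply the $L_{h,p}$-Lipschitz bound to reduce to $\|F(x_{k}+d_{k})-F(x_{k})-A_{k}d_{k}\|_{p}$, split by the triangle inequality into a quadratic term handled by Remark \ref{rem:mais1} and a linear term handled by $\|A_{k}-J_{F}(x_{k})\|_{2}\leq (L_{J}/2)\Delta_{k}$ (this last bound combines Lemma \ref{firstlem} with Lemma \ref{lem:3.1}, which is the crucial place where the algorithm's coupling of $\tau_{k}$ and $\Delta_{k}$ pays off). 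Pushing the norm equivalences \eqref{eq:norms} and $\|d_{k}\|_{p}\leq\Delta_{k}$ through and using $c_{2,p}(n)\leq c_{2,p}(n)^{2}$ to merge constants, I get the upper bound $L_{h,p}L_{J}c_{p,2}(m)c_{2,p}(n)^{2}\Delta_{k}^{2}$. For the predicted reduction, the sufficient-decrease condition \eqref{sdcdf} together with the identity $h(F(x_{k}))-h(F(x_{k})+A_{k}d_{k}^{*})=\Delta_{k}\eta_{p,\Delta_{k}}(x_{k};A_{k})$ and Lemma \ref{lem:last} (applied with $r_{1}=\Delta_{k}\leq\Delta_{*}=r_{2}$) delivers the lower bound $\theta\Delta_{k}\eta_{p,\Delta_{*}}(x_{k};A_{k})$.

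Combining the two, $\rho_{k}\geq\alpha$ collapses to $\Delta_{k}\leq(1-\alpha)\theta\eta_{p,\Delta_{*}}(x_{k};A_{k})/(L_{h,p}L_{J}c_{p,2}(m)c_{2,p}(n)^{2})$, which is precisely \eqref{eq:3.2}. The only mildly delicate point is bookkeeping the norm-equivalence constants so that $c_{p,2}(m)$ and $c_{2,p}(n)^{2}$ come out matching the form of the hypothesis; once one uses Lemma \ref{lem:3.1} to replace $\tau_{k}$ by $\Delta_{k}$ inside $\|A_{k}-J_{F}(x_{k})\|_{2}$, both the Jacobian approximation error and the second-order nonlinearity of $F$ naturally contribute terms of order $\Delta_{k}^{2}$ with the same constant factor, and the desired conclusion follows.
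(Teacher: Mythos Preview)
Your proof is correct and follows essentially the same route as the paper: use Lemma~\ref{lem:3.1} together with \eqref{eq:3.2} and $(1-\alpha)\theta/c_{2,p}(n)\leq 1$ to verify the hypothesis of Lemma~\ref{lemeta} for Step~2, then bound the modelling error by $L_{h,p}L_{J}c_{p,2}(m)c_{2,p}(n)^{2}\Delta_{k}^{2}$ (via Remark~\ref{rem:mais1}, Lemma~\ref{firstlem}, Lemma~\ref{lem:3.1}, and the norm equivalences) and the predicted reduction by $\theta\Delta_{k}\eta_{p,\Delta_{*}}(x_{k};A_{k})$ (via \eqref{sdcdf} and Lemma~\ref{lem:last}) to handle Step~4. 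One minor slip: your displayed identity for $\rho_{k}-\alpha$ is algebraically incorrect---the right relation (and the one the paper uses) is
\[
1-\rho_{k}=\dfrac{h(F(x_{k}+d_{k}))-h(F(x_{k})+A_{k}d_{k})}{h(F(x_{k}))-h(F(x_{k})+A_{k}d_{k})},
\]
and bounding this by $1-\alpha$ gives $\rho_{k}\geq\alpha$; your subsequent estimates apply verbatim to this corrected form.
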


\begin{proof}
From Step 0 of TRFD, we have $\alpha\in (0,1)$ and $\theta\in (0,1]$. Then, it follows from Lemma \ref{lem:3.1}, (\ref{eq:3.2}) and $c_{2,p}(n)\geq 1$ that
\begin{equation*}
\tau_{k}\leq\dfrac{\Delta_{k}}{\sqrt{n}}\leq\dfrac{2\eta_{p,\Delta_{*}}(x_{k};A_{k})}{L_{h,p}L_{J}c_{p,2}(m)c_{2,p}(n)\sqrt{n}}.
\end{equation*}
Since $\psi_{p,\Delta_{*}}(x_{k})>\epsilon$, by Lemma \ref{lemeta} we get
\begin{equation*}
\eta_{p,\Delta_{*}}(x_{k};A_{k})\geq\epsilon/2.
\end{equation*}
Therefore, to conclude that $k\in S$, it remains to show that $\rho_{k}\geq\alpha$. On the one hand, by A3, (\ref{eq:norms}) and A2 we have
\begin{eqnarray*}
& &h(F(x_{k}+d_{k}))-h(F(x_{k})+A_{k}d_{k})\nonumber\\
&=& h(F(x_{k}+d_{k}))-h(F(x_{k})+J_{F}(x_{k})d_{k})+h(F(x_{k})+J_{F}(x_{k})d_{k})-h(F(x_{k})+A_{k}d_{k})\nonumber\\
&\leq & \left|h(F(x_{k}+d_{k}))-h(F(x_{k})+J_{F}(x_{k})d_{k})\right|+\left|h(F(x_{k})+J_{F}(x_{k})d_{k})-h(F(x_{k})+A_{k}d_{k})\right|\nonumber\\
&\leq & L_{h,p}\|F(x_{k}+d_{k})-F(x_{k})-J_{F}(x_{k})d_{k}\|_{p}+L_{h,p}\|(J_{F}(x_{k})-A_{k})d_{k}\|_{p}\nonumber\\
&\leq & L_{h,p}c_{p,2}(m)\|F(x_{k}+d_{k})-F(x_{k})-J_{F}(x_{k})d_{k}\|_{2}+L_{h,p}c_{p,2}(m)\|J_{F}(x_{k})-A_{k}\|_{2}\|d_{k}\|_{2}\nonumber\\
&\leq & L_{h,p}c_{p,2}(m)\frac{L_{J}}{2}\|d_{k}\|_{2}^{2}+L_{h,p}c_{p,2}(m)\frac{L_{J}\sqrt{n}}{2}\tau_{k}\|d_{k}\|_{2}\nonumber\\
&\leq & \left(0.5\right)L_{h,p}L_{J}c_{p,2}(m)c_{2,p}(n)^{2}\|d_{k}\|_{p}^{2}+\left(0.5\right)L_{h,p}L_{J}c_{p,2}(m)c_{2,p}(n)\tau_{k}\sqrt{n}\|d_{k}\|_{p}\nonumber\\
&\leq & \left(0.5\right)L_{h,p}L_{J}c_{p,2}(m)c_{2,p}(n)^{2}\Delta_{k}^{2}+\left(0.5\right)L_{h,p}L_{J}c_{p,2}(m)c_{2,p}(n)\tau_{k}\sqrt{n}\Delta_{k}.
\end{eqnarray*}
\normalsize
Then, by Lemma \ref{lem:3.1} and $c_{2,p}(n)\geq 1$, we have
\begin{eqnarray}
h(F(x_{k}+d_{k}))-h(F(x_{k})+A_{k}d_{k})&\leq &\left(0.5\right)L_{h,p}L_{J}c_{p,2}(m)c_{2,p}(n)\left(c_{2,p}(n)+1\right)\Delta_{k}^{2}\nonumber\\
&\leq &L_{h,p}L_{J}c_{p,2}(m)c_{2,p}(n)^{2}\Delta_{k}^{2}.
\label{eq:3.4}
\end{eqnarray}
On the other hand, by (\ref{sdcdf}) and (\ref{eq:criticality}) we have
\begin{eqnarray*}
h(F(x_{k}))-h(F(x_{k})+A_{k}d_{k})&\geq &\theta\left[h(F(x_{k}))-h(F(x_{k})+A_{k}d_{k}^{*})\right]\\ 
&=&\theta\Delta_{k}\left[\dfrac{1}{\Delta_{k}}\left(h(F(x_{k}))-h(F(x_{k})+A_{k}d_{k}^{*})\right)\right]\\
&=&\theta\Delta_{k}\eta_{p,\Delta_{k}}(x_{k};A_{k}).
\end{eqnarray*}
Since $\Delta_{k}\leq\Delta_{*}$, it follows from Lemma \ref{lem:last} that
\begin{equation}
h(F(x_{k}))-h(F(x_{k})+A_{k}d_{k})\geq\theta\Delta_{k}\eta_{p,\Delta_{*}}(x_{k};A_{k}).
    \label{eq:3.5}
\end{equation}
Now, combining (\ref{ratio_generalset}), (\ref{eq:3.4}), (\ref{eq:3.5}) and (\ref{eq:3.2}), we obtain
\begin{eqnarray*}
1-\rho_{k}&=&\dfrac{h(F(x_{k}))-h(F(x_{k})+A_{k}d_{k})-\left[h(F(x_{k}))-h(F(x_{k}+d_{k}))\right]}{h(F(x_{k}))-h(F(x_{k})+A_{k}d_{k})}\\
&=&\dfrac{h(F(x_{k}+d_{k}))-h(F(x_{k})+A_{k}d_{k})}{h(F(x_{k}))-h(F(x_{k})+A_{k}d_{k})}\leq \dfrac{L_{h,p}L_{J}c_{p,2}(m)c_{2,p}(n)^{2}\Delta_{k}^{2}}{\theta\Delta_{k}\eta_{p,\Delta_{*}}(x_{k};A_{k})}\\
& = &\dfrac{L_{h,p}L_{J}c_{p,2}(m)c_{2,p}(n)^{2}\Delta_{k}}{\theta\eta_{p,\Delta_{*}}(x_{k};A_{k})}\leq 1-\alpha.
\end{eqnarray*}
Therefore, $\rho_{k}\geq\alpha$, and we conclude that $k\in\mathcal{S}$.
\end{proof}
\vspace{0.2cm}
\noindent Now we can obtain a lower bound on the trust-region radii.
\vspace{0.2cm}
\begin{lemma}
\label{lem:3.3}
Suppose that A1-A3 hold and, given $T\geq 1$, let $\left\{\Delta_{k}\right\}_{k=0}^{T}$ be generated by TRFD. If 
\begin{equation*}
\psi_{p,\Delta_{*}}(x_{k})>\epsilon,\quad\text{for}\quad k=0,\ldots,T-1,
\label{eq:3.6}
\end{equation*}
then
\begin{equation}
\Delta_{k}\geq\dfrac{(1-\alpha)\theta\epsilon}{4L_{h,p}\max\left\{\sigma,L_{J}\right\}c_{p,2}(m)c_{2,p}(n)^{2}}\equiv\Delta_{\min}(\epsilon),\quad\text{for}\quad k=0,\ldots,T.
\label{eq:3.7}
\end{equation}
\end{lemma}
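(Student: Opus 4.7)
My plan is to prove \eqref{eq:3.7} by induction on $k$, leveraging Lemma \ref{lem:3.2} (in its contrapositive form) to handle the unsuccessful iterations in which the radius is actually halved, and invoking the update rules of TRFD directly in the other cases.

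For the base case $k=0$, I will start from the initialization $\Delta_0 \geq \tau_0\sqrt{n}$ prescribed in Step~0 and substitute the explicit value
$$\tau_0 = \frac{\epsilon}{L_{h,p}\sigma c_{p,2}(m)c_{2,p}(n)\sqrt{n}}$$
to obtain $\Delta_0 \geq \epsilon/(L_{h,p}\sigma c_{p,2}(m)c_{2,p}(n))$. I would then compare this lower bound with $\Delta_{\min}(\epsilon)$ and show that the ratio is at least $4\max\{\sigma,L_J\}c_{2,p}(n)/[(1-\alpha)\theta\sigma]$, which exceeds $1$ since $\max\{\sigma,L_J\}\geq\sigma$, $c_{2,p}(n)\geq 1$, and $(1-\alpha)\theta\in(0,1)$. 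Thus $\Delta_0\geq\Delta_{\min}(\epsilon)$.

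For the inductive step, I assume $\Delta_k \geq \Delta_{\min}(\epsilon)$ for some $k\in\{0,\ldots,T-1\}$ and split according to the four iteration types. If $k\in\mathcal{U}^{(1)}$, then Step~2 gives $\Delta_{k+1}=\Delta_k$; if $k\in\mathcal{S}$, then Step~4 gives $\Delta_{k+1}\geq\Delta_k$; in both cases the bound is immediate from the induction hypothesis. The only nontrivial cases are $k\in\mathcal{U}^{(2)}\cup\mathcal{U}^{(3)}$, where Step~5 sets $\Delta_{k+1}=\frac{1}{2}\Delta_k$. In these cases, $\eta_{p,\Delta_{*}}(x_k;A_k)\geq\epsilon/2$ (since we are not in $\mathcal{U}^{(1)}$) and $k\notin\mathcal{S}$. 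Using $\psi_{p,\Delta_{*}}(x_k)>\epsilon$ together with the contrapositive of Lemma \ref{lem:3.2}, I get
$$\Delta_k \;>\; \frac{(1-\alpha)\theta\,\eta_{p,\Delta_{*}}(x_k;A_k)}{L_{h,p}L_J c_{p,2}(m)c_{2,p}(n)^2} \;\geq\; \frac{(1-\alpha)\theta\epsilon}{2L_{h,p}L_J c_{p,2}(m)c_{2,p}(n)^2}.$$
Dividing by $2$ and using $L_J\leq\max\{\sigma,L_J\}$ yields $\Delta_{k+1}\geq\Delta_{\min}(\epsilon)$, closing the induction.

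The main subtlety, and the reason for the $\max\{\sigma,L_J\}$ appearing in the definition of $\Delta_{\min}(\epsilon)$, is that two very different mechanisms feed into the lower bound: the initialization is driven by $\sigma$ (through $\tau_0$), while the halving analysis is driven by $L_J$ (through Lemma \ref{lem:3.2}). Taking the maximum in the denominator ensures that $\Delta_{\min}(\epsilon)$ is simultaneously dominated by the base-case bound and the inductive-step bound. Beyond this bookkeeping, no new estimate is required — the proof is essentially a case analysis built on top of the already-established sufficient condition for success.
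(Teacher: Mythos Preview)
Your proposal is correct and follows essentially the same approach as the paper: an induction on $k$ that uses the contrapositive of Lemma~\ref{lem:3.2} to handle the radius-halving case, together with the explicit formula for $\tau_0$ to establish the base case. The only cosmetic difference is that the paper first proves the intermediate bound $\Delta_k\ge\tilde{\Delta}_{\min}(\epsilon):=\min\{\Delta_0,\,(1-\alpha)\theta\epsilon/(4L_{h,p}L_Jc_{p,2}(m)c_{2,p}(n)^2)\}$ and then shows $\tilde{\Delta}_{\min}(\epsilon)\ge\Delta_{\min}(\epsilon)$, whereas you fold this comparison directly into the base case and the inductive step.
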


\begin{proof}
First, let us prove by induction that 
\begin{equation}
\Delta_{k}\geq\min\left\{\Delta_{0},\dfrac{(1-\alpha)\theta\epsilon}{4L_{h,p}L_{J}c_{p,2}(m)c_{2,p}(n)^{2}}\right\}\equiv\tilde{\Delta}_{\min}(\epsilon),\quad\text{for}\quad k=0,\ldots,T.
\label{eq:3.8}
\end{equation}
Clearly, the inequality in (\ref{eq:3.8}) is true for $k=0$. Suppose that the inequality in (\ref{eq:3.8}) is true for some $k\in\left\{0,\ldots,T-1\right\}$. If $k\in\mathcal{U}^{(1)}$, then $\Delta_{k+1}=\Delta_{k}\geq\tilde{\Delta}_{\min}(\epsilon)$ and so (\ref{eq:3.8}) holds for $k+1$. Now, suppose that $k\notin\mathcal{U}^{(1)}$. In this case, we have $\eta_{p,\Delta_{*}}(x_{k};A_{k})\geq\epsilon/2$. Thus, if 
\begin{equation}
\Delta_{k}\leq\dfrac{(1-\alpha)\theta\epsilon}{2L_{h,p}L_{J}c_{p,2}(m)c_{2,p}(n)^{2}},
\label{eq:3.9}
\end{equation}
then by Lemma \ref{lem:3.2}, $k\in\mathcal{S}$. Consequently, Step 4 of TRFD and the induction assumption imply that
\begin{equation*}
\Delta_{k+1}=\min\left\{2\Delta_{k},\Delta_{*}\right\}\geq\min\left\{\Delta_{k},\Delta_{*}\right\}=\Delta_{k}\geq\tilde{\Delta}_{\min}(\epsilon).
\end{equation*}
Now, suppose that (\ref{eq:3.9}) is not true. Since in any case we have $\Delta_{k+1}\geq\frac{1}{2}\Delta_{k}$, we will have
\begin{equation*}
\Delta_{k+1}\geq\dfrac{1}{2}\Delta_{k}>\dfrac{(1-\alpha)\theta\epsilon}{4L_{h,p}L_{J}c_{p,2}(m)c_{2,p}(n)^{2}}\geq\tilde{\Delta}_{\min}(\epsilon).
\end{equation*}
This shows that (\ref{eq:3.8}) is true. Finally, it follows from Step 0 of TRFD that
\begin{equation*}
\Delta_{0}\geq  \tau_{0}\sqrt{n}=\dfrac{\epsilon}{L_{h,p}\sigma c_{p,2}(m)c_{2,p}(n)}\\
\geq \dfrac{(1-\alpha)\theta\epsilon}{4L_{h,p}\max\left\{\sigma,L_{J}\right\}c_{p,2}(m)c_{2,p}(n)^{2}}.
\end{equation*}
Thus, from the definition of $\tilde{\Delta}_{\min}(\epsilon)$ in (\ref{eq:3.8}), we see that
\begin{equation}
\tilde{\Delta}_{\min}(\epsilon)\geq\dfrac{(1-\alpha)\theta\epsilon}{4L_{h,p}\max\left\{\sigma,L_{J}\right\}c_{p,2}(m)c_{2,p}(n)^{2}}=\Delta_{\min}(\epsilon).
\label{eq:3.10}
\end{equation}
Then, combining (\ref{eq:3.8}) and (\ref{eq:3.10}), we conclude that (\ref{eq:3.7}) is true.
\end{proof}

\subsection{Worst-Case Complexity Bound for Nonconvex Problems}

\noindent Given $j\in\left\{0,1,2,\ldots,\right\}$, let
\begin{eqnarray*}
\mathcal{S}_{j}&=&\left\{0,1,\ldots,j\right\}\cap\mathcal{S},\\
\mathcal{U}^{(i)}_{j}&=&\left\{0,1,\ldots,j\right\}\cap\mathcal{U}^{(i)},\quad i\in\left\{1,2,3\right\}.
\end{eqnarray*}
Also, let 
\begin{equation}
    T_{g}(\epsilon)=\inf\left\{k\in\mathbb{N}\,:\,\psi_{p,\Delta_{*}}(x_{k})\leq\epsilon\right\}
    \label{eq:hitting}
\end{equation}
be the index of the first iteration in which $\left\{x_{k}\right\}_{k\geq 0}$ reaches an $\epsilon$-approximate stationary point, if it exists. Our goal is to obtain a finite upper bound for $T_{g}(\epsilon)$. Assuming that $T_{g}(\epsilon)\geq 1$, it follows from the notation above that
\begin{eqnarray}
T_{g}(\epsilon)&=& \left|\mathcal{S}_{T_{g}(\epsilon)-1}\cup\left(\mathcal{U}_{T_{g}(\epsilon)-1}^{(1)}\cup \mathcal{U}_{T_{g}(\epsilon)-1}^{(2)}\cup \mathcal{U}_{T_{g}(\epsilon)-1}^{(3)}\right)\right|\nonumber\\
&\leq & \left|\mathcal{S}_{T_{g}(\epsilon)-1}\right|+\left|\mathcal{U}_{T_{g}(\epsilon)-1}^{(1)}\cup\mathcal{U}_{T_{g}(\epsilon)-1}^{(3)}\right|+\left|\mathcal{U}_{T_{g}(\epsilon)-1}^{(2)}\cup \mathcal{U}_{T_{g}(\epsilon)-1}^{(3)}\right|.
\label{eq:motivation}
\end{eqnarray}
In the next three lemmas, we will provide upper bounds for each of the three terms in (\ref{eq:motivation}). To that end, let us consider the following additional assumption:
\\[0.2cm]
\textbf{A4.} There exists $f_{low}\in\mathbb{R}$ such that $f(x)\geq f_{low}$ for all $x\in\mathbb{R}^{n}$.
\\[0.2cm]
The next lemma provides an upper bound on $\left|\mathcal{S}_{T_{g}(\epsilon)-1}\right|$.
\vspace{0.2cm}
\begin{lemma}
\label{lem:3.4}
Suppose that A1-A4 hold and that $T_g(\epsilon)\geq 1$. Then
\begin{equation}
|\mathcal{S}_{T_{g}(\epsilon)-1}|\leq \dfrac{8L_{h,p}\max\left\{\sigma,L_{J}\right\}c_{p,2}(m)c_{2,p}(n)^{2}(f(x_{0})-f_{low})}{\alpha (1-\alpha)\theta^{2}}\epsilon^{-2}.
\label{eq:3.11}
\end{equation}
\end{lemma}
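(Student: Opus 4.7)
The plan is the standard trust-region telescoping argument, using the fact that objective values are monotone nonincreasing across iterations and strictly decrease on successful ones by a controlled amount.

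First, I would observe that on any iteration that is not successful (i.e.\ $k \in \mathcal{U}^{(1)} \cup \mathcal{U}^{(2)} \cup \mathcal{U}^{(3)}$), the algorithm sets $x_{k+1} = x_k$, so $f(x_{k+1}) = f(x_k)$. Hence, if I let $N = T_g(\epsilon)$, the telescoping sum
\begin{equation*}
f(x_0) - f(x_N) \;=\; \sum_{k=0}^{N-1} \bigl(f(x_k) - f(x_{k+1})\bigr) \;=\; \sum_{k \in \mathcal{S}_{N-1}} \bigl(f(x_k) - f(x_{k+1})\bigr)
\end{equation*}
picks up contributions only from the successful iterations. By assumption A4, the left-hand side is bounded above by $f(x_0) - f_{low}$, so it suffices to obtain a uniform lower bound on the per-iteration decrease on $\mathcal{S}_{N-1}$.

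For the per-iteration bound, fix $k \in \mathcal{S}_{N-1}$. From the definition of $\mathcal{S}$ and Step 4 of TRFD, $\rho_k \geq \alpha$, so
\begin{equation*}
f(x_k) - f(x_{k+1}) \;\geq\; \alpha\bigl[h(F(x_k)) - h(F(x_k)+A_k d_k)\bigr].
\end{equation*}
The model-decrease estimate (3.5), which was already derived inside the proof of Lemma 3.2 as a direct consequence of the fraction-of-Cauchy condition \eqref{sdcdf} and Lemma \ref{lem:last}, gives $h(F(x_k)) - h(F(x_k)+A_k d_k) \geq \theta \Delta_k \eta_{p,\Delta_*}(x_k;A_k)$. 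Since $k \in \mathcal{S}_{N-1}$, the iteration is successful (so in particular not of type $\mathcal{U}^{(1)}$), whence $\eta_{p,\Delta_*}(x_k;A_k) \geq \epsilon/2$; moreover, since $k \leq N-1 = T_g(\epsilon)-1$, we have $\psi_{p,\Delta_*}(x_j) > \epsilon$ for $j = 0, \ldots, k$, so Lemma \ref{lem:3.3} yields $\Delta_k \geq \Delta_{\min}(\epsilon)$. Combining these,
\begin{equation*}
f(x_k) - f(x_{k+1}) \;\geq\; \alpha \theta \Delta_{\min}(\epsilon) \cdot \tfrac{\epsilon}{2} \;=\; \dfrac{\alpha(1-\alpha)\theta^{2}\,\epsilon^{2}}{8L_{h,p}\max\{\sigma,L_{J}\}\,c_{p,2}(m)\,c_{2,p}(n)^{2}}.
\end{equation*}

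Summing this bound over $k \in \mathcal{S}_{N-1}$ and comparing with $f(x_0) - f_{low}$ yields \eqref{eq:3.11} after rearrangement. There is no real obstacle here; every ingredient has already been assembled. The only small care needed is to verify that the hypothesis $\psi_{p,\Delta_*}(x_k) > \epsilon$ of Lemma \ref{lem:3.3} is in force for all $k \leq N-1$, which is immediate from the definition \eqref{eq:hitting} of $T_g(\epsilon)$ as an infimum.
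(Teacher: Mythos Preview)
Your proof is correct and follows essentially the same approach as the paper: both arguments use the telescoping sum $f(x_0)-f(x_N)=\sum_{k\in\mathcal{S}_{N-1}}(f(x_k)-f(x_{k+1}))\leq f(x_0)-f_{low}$, bound the per-iteration decrease on $\mathcal{S}$ via $\rho_k\geq\alpha$, the model-decrease estimate (3.5), $\eta_{p,\Delta_*}(x_k;A_k)\geq\epsilon/2$, and $\Delta_k\geq\Delta_{\min}(\epsilon)$ from Lemma~\ref{lem:3.3}, and then rearrange. The only cosmetic difference is that the paper re-derives (3.5) inline rather than citing it.
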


\begin{proof}
Let $k\in S_{T_{g}(\epsilon)-1}$, that is, $\eta_{p,\Delta_{*}}(x_{k};A_{k})\geq\epsilon/2$ and $\rho_{k}\geq\alpha$. Then, by (\ref{sdcdf}), (\ref{eq:criticality}), $\Delta_{k}\leq\Delta_{*}$ and Lemma \ref{lem:last}, we have
\begin{eqnarray*}
f(x_{k})-f(x_{k+1})&=&h(F(x_{k}))-h(F(x_{k}+d_{k}))\\
&\geq &\alpha\left[h(F(x_{k}))-h(F(x_{k})+A_{k}d_{k})\right]\\
&\geq &\alpha\theta\left[h(F(x_{k}))-h(F(x_{k})+A_{k}d_{k}^{*})\right]\\
&=&\alpha\theta\Delta_{k}\left[\frac{1}{\Delta_{k}}\left(h(F(x_{k}))-h(F(x_{k})+A_{k}d_{k}^{*})\right)\right]\\
&=&\alpha\theta\Delta_{k}\eta_{p,\Delta_{k}}(x_{k};A_{k})\\
&\geq &\alpha\theta\Delta_{k}\eta_{p,\Delta_{*}}(x_{k};A_{k})\\
&\geq &\dfrac{\alpha\theta\epsilon}{2}\Delta_{k}.
\end{eqnarray*}
Consequently, it follows from Lemma \ref{lem:3.3} that
\begin{equation}
f(x_{k})-f(x_{k+1})\geq \dfrac{\alpha(1-\alpha)\theta^{2}}{8L_{h,p}\max\left\{\sigma,L_{J}\right\}c_{p,2}(m)c_{2,p}(n)^{2}}\epsilon^{2}\quad\text{when}\,\,k\in\mathcal{S}_{T_{g}(\epsilon)-1}.
\label{eq:3.12}
\end{equation}
Let $\mathcal{S}_{T_{g}(\epsilon)-1}^{c}=\left\{0,1,\ldots,T_{g}(\epsilon)-1\right\}\setminus \mathcal{S}_{T_{g}(\epsilon)-1}$. Notice that, when $k\in\mathcal{S}_{T_{g}(\epsilon)-1}^{c}$, then $f(x_{k+1})=f(x_{k})$. Thus, it follows from A4 and (\ref{eq:3.12}) that
\begin{eqnarray*}
f(x_{0})-f_{low}&\geq & f(x_{0})-f(x_{T_{g}(\epsilon)})=\sum_{k=0}^{T_{g}(\epsilon)-1}f(x_{k})-f(x_{k+1})\\
&= & \sum_{k\in\mathcal{S}_{T_{g}(\epsilon)-1}}f(x_{k})-f(x_{k+1})+\sum_{k\in\mathcal{S}_{T_{g}(\epsilon)-1}^{c}}f(x_{k})-f(x_{k+1})\\
&=&\sum_{k\in\mathcal{S}_{T_{g}(\epsilon)-1}}f(x_{k})-f(x_{k+1})\\
&\geq &|\mathcal{S}_{T_{g}(\epsilon)-1}|\dfrac{\alpha(1-\alpha)\theta^{2}}{8L_{h,p}\max\left\{\sigma,L_{J}\right\}c_{p,2}(m)c_{2,p}(n)^{2}}\epsilon^{2},
\end{eqnarray*}
which implies that (\ref{eq:3.11}) is true.
\end{proof}
\vspace{0.2cm}
The next lemma provides an upper bound on  $\left|\mathcal{U}_{T_{g}(\epsilon)-1}^{(1)}\cup\mathcal{U}_{T_{g}(\epsilon)-1}^{(3)}\right|$.
\begin{lemma}
\label{lem:3.6}
Suppose that A1-A3 hold and that $T_{g}(\epsilon)\geq 2$. If $T\in\left\{2,\ldots,T_{g}(\epsilon)\right\}$, then
\begin{equation}
\left|\mathcal{U}_{T-1}^{(1)}\cup\mathcal{U}_{T-1}^{(3)}\right|\leq\left\lceil\left|\log_{2}\left(\frac{\tau_{0}\sqrt{n}}{\Delta_{\min}(\epsilon)}\right)\right|\right\rceil,
\label{eq:3.18}
\end{equation}
where $\Delta_{\min}(\epsilon)$ is defined in \eqref{eq:3.7}.
\end{lemma}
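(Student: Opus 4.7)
The plan is to track how $\tau_k$ evolves across the iteration types. Inspecting Steps 2, 4 and 5 of TRFD, I observe that $\tau_k$ is halved precisely when $k\in\mathcal{U}^{(1)}\cup\mathcal{U}^{(3)}$ and is preserved on the remaining iterations. Writing $N=\left|\mathcal{U}_{T-1}^{(1)}\cup\mathcal{U}_{T-1}^{(3)}\right|$ and letting $k^{*}$ denote the largest index in this set (assuming $N\ge 1$; otherwise the bound is trivial), the non-increasingness of $\tau_k$ gives $\tau_{k^{*}}=\tau_{0}/2^{N-1}$.

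The crux is to prove that at this final halving step one still has $\tau_{k^{*}}\sqrt{n}>\Delta_{\min}(\epsilon)$. I would split into the two sub-cases by the type of $k^{*}$. If $k^{*}\in\mathcal{U}^{(3)}$, the defining inequality of $\mathcal{U}^{(3)}$ is exactly $\tau_{k^{*}}\sqrt{n}>\Delta_{k^{*}+1}$, and since $k^{*}+1\le T\le T_{g}(\epsilon)$ we have $\psi_{p,\Delta_{*}}(x_{j})>\epsilon$ for $j\le T-1$, so Lemma \ref{lem:3.3} applies and gives $\Delta_{k^{*}+1}\ge\Delta_{\min}(\epsilon)$. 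If $k^{*}\in\mathcal{U}^{(1)}$, then $\psi_{p,\Delta_{*}}(x_{k^{*}})>\epsilon$ while $\eta_{p,\Delta_{*}}(x_{k^{*}};A_{k^{*}})<\epsilon/2$, so the contrapositive of Lemma \ref{lemeta} (applied with $r=\Delta_{*}$) forces $\tau_{k^{*}}>\epsilon/(L_{h,p}L_{J}c_{p,2}(m)c_{2,p}(n)\sqrt{n})$, and a direct comparison of this with $\Delta_{\min}(\epsilon)/\sqrt{n}$ shows that it is the larger quantity, because their ratio reduces to $4\max\{\sigma,L_{J}\}c_{2,p}(n)/((1-\alpha)\theta L_{J})\ge 4$ in view of $\max\{\sigma,L_J\}\ge L_J$, $c_{2,p}(n)\ge 1$, $\theta\in(0,1]$ and $1-\alpha<1$.

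Once $\tau_{k^{*}}\sqrt{n}>\Delta_{\min}(\epsilon)$ is in place, substituting $\tau_{k^{*}}=\tau_{0}/2^{N-1}$ yields $2^{N-1}<\tau_{0}\sqrt{n}/\Delta_{\min}(\epsilon)$, whence $N<1+\log_{2}(\tau_{0}\sqrt{n}/\Delta_{\min}(\epsilon))$; since $N$ is an integer, this gives $N\le\lceil\log_{2}(\tau_{0}\sqrt{n}/\Delta_{\min}(\epsilon))\rceil$. In the remaining edge case $\tau_{0}\sqrt{n}\le\Delta_{\min}(\epsilon)$ one argues that $N$ must be $0$: any halving would produce $\tau_{k^{*}}\sqrt{n}>\Delta_{\min}(\epsilon)\ge\tau_{0}\sqrt{n}$, contradicting $\tau_{k^{*}}\le\tau_{0}$. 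The absolute value inside the ceiling in \eqref{eq:3.18} absorbs this case, so the unified bound holds. The main obstacle I anticipate is the $\mathcal{U}^{(1)}$ sub-case, since the lower bound on $\tau_{k^{*}}$ there comes from a different inequality (Lemma \ref{lemeta}) than from the algorithm's internal test, and one must verify by hand that this indirect bound is still strong enough to exceed $\Delta_{\min}(\epsilon)/\sqrt{n}$; the $\mathcal{U}^{(3)}$ sub-case is essentially by construction.
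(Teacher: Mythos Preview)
Your proof is correct and uses the same two key ingredients as the paper (Lemma~\ref{lemeta} to rule out $\mathcal{U}^{(1)}$ and Lemma~\ref{lem:3.3} to rule out $\mathcal{U}^{(3)}$), but the organization differs. The paper argues by contradiction: assuming the count exceeds the ceiling, it locates the first index $k_*$ at which the count equals the ceiling, deduces $\tau_k\sqrt{n}\le\Delta_{\min}(\epsilon)$ for every $k\ge k_*$, and then shows that no such $k$ can lie in $\mathcal{U}^{(1)}\cup\mathcal{U}^{(3)}$, so the count freezes---contradiction. You instead work directly with the \emph{last} halving index $k^{*}$, establish $\tau_{k^{*}}\sqrt{n}>\Delta_{\min}(\epsilon)$ via the same case split, and extract the bound from $\tau_{k^{*}}=\tau_{0}/2^{N-1}$. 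Your route is slightly leaner since it avoids tracking all iterations beyond the threshold. As a side remark, your edge case $\tau_{0}\sqrt{n}\le\Delta_{\min}(\epsilon)$ is actually vacuous: by \eqref{eq:3.22} one has $\tau_{0}\sqrt{n}/\Delta_{\min}(\epsilon)=4\max\{\sigma,L_{J}\}c_{2,p}(n)/(\sigma(1-\alpha)\theta)\ge 4$, so the absolute value in \eqref{eq:3.18} is never active.
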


\begin{proof}
Suppose by contradiction that 
\begin{equation}
\left|\mathcal{U}_{T-1}^{(1)}\cup\mathcal{U}_{T-1}^{(3)}\right|>\left\lceil\left|\log_{2}\left(\frac{\tau_{0}\sqrt{n}}{\Delta_{\min}(\epsilon)}\right)\right|\right\rceil.
\label{eq:3.19}
\end{equation}
Notice that 
\begin{equation}
\left|\mathcal{U}_{0}^{(1)}\cup\mathcal{U}_{0}^{(3)}\right|\leq 1\quad\text{and}\quad
\left|\mathcal{U}_{k+1}^{(1)}\cup\mathcal{U}_{k+1}^{(3)}\right|\leq \left|\mathcal{U}_{k}^{(1)}\cup\mathcal{U}_{k}^{(3)}\right|+1, \quad \forall k.
\label{eq:3.20}
\end{equation}
It follows from (\ref{eq:3.19}) and (\ref{eq:3.20}) that there exists $k_{*}\in\left\{0,\ldots,T-2\right\}$ such that 
\begin{equation*}
\left|\mathcal{U}_{k_{*}}^{(1)}\cup\mathcal{U}_{k_{*}}^{(3)}\right|=\left\lceil\left|\log_{2}\left(\frac{\tau_{0}\sqrt{n}}{\Delta_{\min}(\epsilon)}\right)\right|\right\rceil.
\end{equation*}
In view of (\ref{eq:3.20}), \fbox{for any $k\in\mathbb{N}\cap [k_{*},T-1]$} we have
\begin{equation*}
\left|\mathcal{U}_{k}^{(1)}\cup\mathcal{U}_{k}^{(3)}\right|\geq\left|\mathcal{U}_{k_{*}}^{(1)}\cup\mathcal{U}_{k_{*}}^{(3)}\right|\geq\left|\log_{2}\left(\frac{\tau_{0}\sqrt{n}}{\Delta_{\min}(\epsilon)}\right)\right| \geq -\log_{2}\left(\frac{\Delta_{\min}(\epsilon)}{\tau_{0}\sqrt{n}}\right).
\end{equation*}
Thus
\begin{equation*}
-\left|\mathcal{U}_{k}^{(1)}\cup\mathcal{U}_{k}^{(3)}\right|\leq \log_{2}\left(\frac{\Delta_{\min}(\epsilon)}{\tau_{0}\sqrt{n}}\right),
\end{equation*}
and so 
\begin{equation}
\tau_{k}=(0.5)^{\left|\mathcal{U}_{k}^{(1)}\cup\,\mathcal{U}_{k}^{(3)}\right|}\tau_{0}=2^{-\left|\mathcal{U}_{k}^{(1)}\cup\,\mathcal{U}_{k}^{(3)}\right|}\tau_{0}\leq\dfrac{\Delta_{\min}(\epsilon)}{\sqrt{n}}.
\label{eq:3.21}
\end{equation}
By (\ref{eq:3.21}), the definition of $\Delta_{\min}(\epsilon)$ in (\ref{eq:3.7}), and $c_{2,p}(n)\geq 1$, we have
\begin{equation*}
\tau_{k}\leq \dfrac{\epsilon}{L_{h,p}L_{J}c_{p,2}(m)c_{2,p}(n)\sqrt{n}}.    
\end{equation*}
Since we also have $\psi_{p,\Delta_{*}}(x_{k})>\epsilon$, it follows from Lemma \ref{lemeta} that $\eta_{p,\Delta_{*}}(x_{k})\geq \epsilon/2$. Therefore, the $k$-$th$ iteration is not an unsuccessful iteration of type I, i.e., \fbox{$k\notin\mathcal{U}^{(1)}$}. In addition, (\ref{eq:3.21}) and Lemma \ref{lem:3.3} imply that
\begin{equation*}
\tau_{k}\sqrt{n}\leq\Delta_{\min}(\epsilon)\leq\Delta_{k+1},
\end{equation*}
which means that the $k$-$th$ iteration is not an unsuccessful iteration of type III, i.e., \fbox{$k\notin\mathcal{U}^{(3)}$}. In summary, \fbox{$k\notin\mathcal{U}^{(1)}\cup\mathcal{U}^{(3)}$} and so
\begin{equation*}
\left|\mathcal{U}_{k}^{(1)}\cup\,\mathcal{U}_{k}^{(3)}\right|=\left|\mathcal{U}_{k-1}^{(1)}\cup\,\mathcal{U}_{k-1}^{(3)}\right|.
\end{equation*}
Thus, for any $k_{*}<k\leq T-1$, 
\begin{equation*}
\left|\mathcal{U}_{k}^{(1)}\cup\,\mathcal{U}_{k}^{(3)}\right|=\left|\mathcal{U}_{k-1}^{(1)}\cup\,\mathcal{U}_{k-1}^{(3)}\right|=\ldots=\left|\mathcal{U}_{k_{*}}^{(1)}\cup\,\mathcal{U}_{k_{*}}^{(3)}\right|
\end{equation*}
In particular,
\begin{equation*}
\left|\mathcal{U}_{T-1}^{(1)}\cup\,\mathcal{U}_{T-1}^{(3)}\right|=\left|\mathcal{U}_{k_{*}}^{(1)}\cup\,\mathcal{U}_{k_{*}}^{(3)}\right|=\left\lceil\left|\log_{2}\left(\frac{\tau_{0}\sqrt{n}}{\Delta_{\min}(\epsilon)}\right)\right|\right\rceil,
\end{equation*}
contradicting (\ref{eq:3.19}).
\end{proof}

\begin{remark}
By the definition of $\tau_{0}$ (at Step 0 of TRFD) and the definition of $\Delta_{\min}(\epsilon)$ in \eqref{eq:3.7}, we have
\begin{equation}
\dfrac{\tau_{0}\sqrt{n}}{\Delta_{\min}(\epsilon)}
=\dfrac{4\max\left\{\sigma,L_{J}\right\}c_{2,p}(n)}{\sigma(1-\alpha)\theta}.
\label{eq:3.22}
\end{equation}
\normalsize
\end{remark}
\vspace{0.2cm}
The lemma below provides an upper bound on  $\left|\mathcal{U}_{T_{g}(\epsilon)-1}^{(2)}\cup\,\mathcal{U}_{T_{g}(\epsilon)-1}^{(3)}\right|$.

\begin{lemma}
\label{lem:3.5}
Suppose that A1-A3 hold and that $T_{g}(\epsilon)\geq 1$. If $T\in\left\{1,\ldots,T_{g}(\epsilon)\right\}$, then
\begin{equation}
\left|\mathcal{U}_{T-1}^{(2)}\cup\,\mathcal{U}_{T-1}^{(3)}\right|\leq\log_{2}\left(\dfrac{4L_{h,p}\max\left\{\sigma,L_{J}\right\}c_{p,2}(m)c_{2,p}(n)^{2}\Delta_{0}}{(1-\alpha)\theta}\epsilon^{-1}\right)+|\mathcal{S}_{T-1}|.
\label{eq:3.13}
\end{equation}
\end{lemma}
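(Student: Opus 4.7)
The plan is to track the trust-region radius multiplicatively across the iterations $0,1,\ldots,T-1$ and then invoke the lower bound from Lemma~\ref{lem:3.3}. According to the four cases in the algorithm description: on a successful iteration ($k\in\mathcal{S}$) we have $\Delta_{k+1}=\min\{2\Delta_k,\Delta_*\}\leq 2\Delta_k$; on an unsuccessful iteration of type~I ($k\in\mathcal{U}^{(1)}$) we have $\Delta_{k+1}=\Delta_k$; and on unsuccessful iterations of types~II and~III ($k\in\mathcal{U}^{(2)}\cup\mathcal{U}^{(3)}$) we have $\Delta_{k+1}=\tfrac{1}{2}\Delta_k$. So each iteration contributes a factor of $2$, $1$, or $\tfrac{1}{2}$ to the radius depending on its type, and these factors multiply.

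Therefore, by a simple telescoping of the update rule for $\Delta_k$,
\begin{equation*}
\Delta_{T}\leq 2^{|\mathcal{S}_{T-1}|}\,\left(\tfrac{1}{2}\right)^{|\mathcal{U}_{T-1}^{(2)}\cup\mathcal{U}_{T-1}^{(3)}|}\,\Delta_{0}.
\end{equation*}
Since the hypothesis $T\leq T_g(\epsilon)$ guarantees $\psi_{p,\Delta_*}(x_k)>\epsilon$ for $k=0,\ldots,T-1$, Lemma~\ref{lem:3.3} applies and yields the lower bound $\Delta_{T}\geq\Delta_{\min}(\epsilon)$. Combining the two inequalities,
\begin{equation*}
\Delta_{\min}(\epsilon)\leq 2^{|\mathcal{S}_{T-1}|-|\mathcal{U}_{T-1}^{(2)}\cup\mathcal{U}_{T-1}^{(3)}|}\,\Delta_{0}.
\end{equation*}

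Taking $\log_2$ on both sides and rearranging gives
\begin{equation*}
|\mathcal{U}_{T-1}^{(2)}\cup\mathcal{U}_{T-1}^{(3)}|\leq |\mathcal{S}_{T-1}|+\log_{2}\!\left(\dfrac{\Delta_{0}}{\Delta_{\min}(\epsilon)}\right).
\end{equation*}
Substituting the explicit form of $\Delta_{\min}(\epsilon)$ from \eqref{eq:3.7} produces exactly the bound~\eqref{eq:3.13}. No serious obstacle is expected; the only care needed is the initial check that when $T=1$ the telescoping still holds (the base case is $\Delta_0 \leq \Delta_0$), and that the cap $\min\{2\Delta_k,\Delta_*\}$ only makes $\Delta_{k+1}$ smaller, so it does not affect the one-sided inequality we use.
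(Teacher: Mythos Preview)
Your proof is correct and follows essentially the same approach as the paper: both track the multiplicative evolution of the trust-region radius across iterations, combine it with the lower bound $\Delta_T\geq\Delta_{\min}(\epsilon)$ from Lemma~\ref{lem:3.3}, and take logarithms. The only cosmetic difference is that the paper works with the reciprocals $\nu_k=1/\Delta_k$ whereas you work with $\Delta_k$ directly, which makes no substantive difference.
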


\begin{proof}
By the update rules for $\Delta_{k}$ in TRFD, we have
\begin{eqnarray*}
\Delta_{k+1}&=&\frac{1}{2}\Delta_{k},\,\,\text{if}\,\,k\in\mathcal{U}_{T-1}^{(2)}\cup\mathcal{U}_{T-1}^{(3)},\\
\Delta_{k+1}&=&\Delta_{k},\quad\text{if}\,\,k\in\mathcal{U}_{T-1}^{(1)},\\
\Delta_{k+1}&\leq & 2\Delta_{k},\,\,\,\text{if}\,\,k\in\mathcal{S}_{T-1}.
\end{eqnarray*}
In addition, by Lemma \ref{lem:3.3} we have
\begin{equation*}
\Delta_{k}\geq \Delta_{\min}(\epsilon)\quad\text{for}\quad k=0,\ldots,T,
\end{equation*}
where $\Delta_{\min}(\epsilon)$ is defined in (\ref{eq:3.7}). Thus, considering $\nu_{k}=1/\Delta_{k}$, it follows that
\begin{eqnarray}
2\nu_{k}&=&\nu_{k+1},\quad\text{if}\,\,k\in\mathcal{U}_{T-1}^{(2)}\cup\mathcal{U}_{T-1}^{(3)},\label{eq:3.14}\\
\nu_{k}&=&\nu_{k+1},\quad\text{if}\,\,k\in\mathcal{U}_{T-1}^{(1)},\label{eq:3.15}\\
\frac{1}{2}\nu_{k}&\leq & \nu_{k+1},\quad\text{if}\,\,k\in\mathcal{S}_{T-1},
\label{eq:3.16}
\end{eqnarray}
and
\begin{equation}
\nu_{k}\leq\Delta_{\min}(\epsilon)^{-1}\quad\text{for}\quad k=0,\ldots,T.
\label{eq:3.17}
\end{equation}
In view of (\ref{eq:3.14})-(\ref{eq:3.17}), we have
\begin{equation*}
2^{\left|\mathcal{U}_{T-1}^{(2)}\cup\,\,\mathcal{U}_{T-1}^{(3)}\right|}\left(0.5\right)^{\left|\mathcal{S}_{T-1}\right|}\nu_{0}\leq\nu_{T}\leq\Delta_{\min}(\epsilon)^{-1}.
\end{equation*}
Then, taking the logarithm in both sides we get
\begin{equation*}
\left|\mathcal{U}_{T-1}^{(2)}\cup\,\mathcal{U}_{T-1}^{(3)}\right|-|\mathcal{S}_{T-1}|\leq\log_{2}\left(\frac{\Delta_{\min}(\epsilon)^{-1}}{\nu_{0}}\right)=\log_{2}\left(\frac{\Delta_{0}}{\Delta_{\min}(\epsilon)}\right),
\end{equation*}
which together with (\ref{eq:3.7}) implies that (\ref{eq:3.13}) is true.
\end{proof}
\vspace{0.2cm}
\noindent Now, combining the previous results, we obtain the following worst-case complexity bound on the number of iterations required by TRFD to find an $\epsilon$-approximate stationary point.
\vspace{0.2cm}
\begin{theorem}
\label{thm:3.1}
Suppose that A1-A4 hold and let $T_{g}(\epsilon)$ be defined by \eqref{eq:hitting}. Then
\begin{eqnarray}
T_{g}(\epsilon)&\leq &\dfrac{16L_{h,p}\max\left\{\sigma,L_{J}\right\}c_{p,2}(m)c_{2,p}(n)^{2}(f(x_{0})-f_{low})}{\alpha (1-\alpha)\theta^{2}}\epsilon^{-2}+\left\lceil\left|\log_{2}\left(\dfrac{4\max\left\{\sigma,L_{J}\right\}c_{2,p}(n)}{\sigma(1-\alpha)\theta}\right)\right|\right\rceil\nonumber\\
& &+\log_{2}\left(\dfrac{4L_{h,p}\max\left\{\sigma,L_{J}\right\}c_{p,2}(m)c_{2,p}(n)^{2}\Delta_{0}}{(1-\alpha)\theta}\epsilon^{-1}\right)+1.
\label{eq:3.23}
\end{eqnarray}
\normalsize
\end{theorem}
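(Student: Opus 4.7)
The strategy is straightforward book-keeping: plug the three bounds from Lemmas \ref{lem:3.4}, \ref{lem:3.6}, and \ref{lem:3.5} into the decomposition identity (\ref{eq:motivation}), and simplify using the Remark identity (\ref{eq:3.22}). Concretely, if $T_g(\epsilon)=0$ the bound is trivial since the right-hand side of (\ref{eq:3.23}) is nonnegative, and the case $T_g(\epsilon)=1$ will be absorbed by the additive constant $+1$. So I would assume $T_g(\epsilon)\geq 2$, which is precisely the range in which all three lemmas are simultaneously applicable with $T=T_g(\epsilon)$.

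The plan is then to estimate each of the three terms on the right-hand side of (\ref{eq:motivation}) separately. First, Lemma \ref{lem:3.4} immediately controls $|\mathcal{S}_{T_g(\epsilon)-1}|$ by $\tfrac{8L_{h,p}\max\{\sigma,L_J\}c_{p,2}(m)c_{2,p}(n)^2(f(x_0)-f_{low})}{\alpha(1-\alpha)\theta^{2}}\epsilon^{-2}$. Second, Lemma \ref{lem:3.6} controls $|\mathcal{U}^{(1)}_{T_g(\epsilon)-1}\cup\mathcal{U}^{(3)}_{T_g(\epsilon)-1}|$ by $\lceil|\log_2(\tau_0\sqrt n/\Delta_{\min}(\epsilon))|\rceil$, and substituting the closed-form expression (\ref{eq:3.22}) for this ratio produces the logarithmic term that appears in (\ref{eq:3.23}). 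Third, Lemma \ref{lem:3.5} controls $|\mathcal{U}^{(2)}_{T_g(\epsilon)-1}\cup\mathcal{U}^{(3)}_{T_g(\epsilon)-1}|$ by the logarithmic term $\log_2\bigl(\tfrac{4L_{h,p}\max\{\sigma,L_J\}c_{p,2}(m)c_{2,p}(n)^2\Delta_0}{(1-\alpha)\theta}\epsilon^{-1}\bigr)$ plus an extra $|\mathcal{S}_{T_g(\epsilon)-1}|$ term.

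The key observation — and the only place where any thought is needed — is that the $|\mathcal{S}_{T_g(\epsilon)-1}|$ term coming out of Lemma \ref{lem:3.5} (which reflects the fact that each successful step is allowed to double $\Delta_k$ and thereby permit an additional unsuccessful shrinking step) combines with the direct occurrence of $|\mathcal{S}_{T_g(\epsilon)-1}|$ in (\ref{eq:motivation}) to contribute \emph{twice} the bound of Lemma \ref{lem:3.4}. This is exactly what produces the coefficient $16$ (rather than $8$) on the $\epsilon^{-2}$ term of (\ref{eq:3.23}). Adding the two logarithmic terms and the $+1$ (absorbing the edge case $T_g(\epsilon)=1$ and any discrepancy between $\lceil|\log_2(\cdot)|\rceil$ and $\log_2(\cdot)$) yields exactly (\ref{eq:3.23}).

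The main obstacle in this theorem is not in this last step but in the lemmas that feed it: the nontrivial mechanism is the coupling of the finite-difference stepsize $\tau_k$ with the trust-region radius $\Delta_k$ (analyzed in Lemma \ref{lem:3.1} and exploited in Lemmas \ref{lem:3.2}--\ref{lem:3.3}), which is what lets the model error $\|J_F(x_k)-A_k\|_2$ be absorbed into $\mathcal{O}(\Delta_k)$ and ultimately yields the sufficient-decrease inequality driving Lemma \ref{lem:3.4}. Once those are in hand, the present theorem is pure arithmetic.
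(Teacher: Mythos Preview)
Your proposal is correct and follows essentially the same route as the paper's own proof: handle the trivial case $T_g(\epsilon)\leq 1$, then combine the decomposition (\ref{eq:motivation}) with Lemmas \ref{lem:3.4}, \ref{lem:3.6}, \ref{lem:3.5} and the identity (\ref{eq:3.22}), noting that $|\mathcal{S}_{T_g(\epsilon)-1}|$ enters twice to produce the coefficient $16$. Your commentary on the role of the $+1$ and on where the real work lies (the coupling of $\tau_k$ and $\Delta_k$ established upstream) is accurate and more explicit than the paper's terse proof.
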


\begin{proof}
If $T_{g}(\epsilon)\leq 1$, then (\ref{eq:3.23}) is clearly true. Let us assume that $T_{g}(\epsilon)\geq 2$. By (\ref{eq:motivation}),
\begin{equation*}
T_{g}(\epsilon)\leq  \left|\mathcal{S}_{T_{g}(\epsilon)-1}\right|+\left|\mathcal{U}_{T_{g}(\epsilon)-1}^{(1)}\cup\mathcal{U}_{T_{g}(\epsilon)-1}^{(3)}\right|+\left|\mathcal{U}_{T_{g}(\epsilon)-1}^{(2)}\cup \mathcal{U}_{T_{g}(\epsilon)-1}^{(3)}\right|.
\end{equation*}
Then, (\ref{eq:3.23}) follows directly from Lemmas \ref{lem:3.4}, \ref{lem:3.6} and \ref{lem:3.5}, together with (\ref{eq:3.22}).
\end{proof}
\noindent Since each iteration of TRFD requires at most $(n+1)$ evaluations of $F(\,\cdot\,)$, from Theorem \ref{thm:3.1} we obtain the following upper bound on the total number of evaluations of $F(\,\cdot\,)$ required by TRFD to find an $\epsilon$-approximate stationary point.
\vspace{0.2cm}
\begin{corollary}
Suppose that A1-A4 hold and let $FE_{T_{g}(\epsilon)-1}$ be the total number of function evaluations executed by TRFD up to the $(T_{g}(\epsilon)-1)$-st iteration. Then
\small
\begin{eqnarray*}
FE_{T_{g}(\epsilon)-1}&\leq &(n+1)\left[\dfrac{16L_{h,p}\max\left\{\sigma,L_{J}\right\}c_{p,2}(m)c_{2,p}(n)^{2}(f(x_{0})-f_{low})}{\alpha (1-\alpha)\theta^{2}}\epsilon^{-2}\right.
\\ & &\left.+\left\lceil\left|\log_{2}\left(\dfrac{4\max\left\{\sigma,L_{J}\right\}c_{2,p}(n)}{\sigma(1-\alpha)\theta}\right)\right|\right\rceil
+\log_{2}\left(\dfrac{4L_{h,p}\max\left\{\sigma,L_{J}\right\}c_{p,2}(m)c_{2,p}(n)^{2}\Delta_{0}}{(1-\alpha)\theta}\epsilon^{-1}\right)+1\right].
\end{eqnarray*}
\label{cor:3.1}    
\end{corollary}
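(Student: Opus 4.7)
The corollary is essentially a per-iteration-cost bound applied on top of the iteration bound from Theorem \ref{thm:3.1}, so the plan splits naturally into two small pieces.

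First, I would argue that each iteration of TRFD requires at most $(n+1)$ new evaluations of $F(\,\cdot\,)$. The only places where $F$ is actually queried are Step 1, where the finite-difference Jacobian $A_k$ is built from $F(x_k)$ and from $F(x_k+\tau_k e_j)$ for $j=1,\ldots,n$, and Step 4, where the trial point is evaluated through $F(x_k+d_k)$. Naively these amount to $n+2$ queries at a single iteration, but $F(x_k)$ either equals the initialization value $F(x_0)$ (for $k=0$) or is carried over from iteration $k-1$ (it coincides with $F(x_{k-1}+d_{k-1})$ when $k-1\in\mathcal{S}$, and with $F(x_{k-1})$ otherwise). Amortizing the cost of $F(x_k)$ to the previous iteration therefore yields the clean uniform bound of at most $(n+1)$ new $F$-queries per iteration. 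The bound is in fact loose: iterations in $\mathcal{U}^{(1)}$ skip Step 4 altogether, and iterations that immediately follow an iteration in $\mathcal{U}^{(2)}$ reuse $A_{k+1}=A_k$ and thus bypass the $n$ perturbations of Step 1; but $(n+1)$ is a safe uniform upper bound valid for all four iteration classes $\mathcal{S},\mathcal{U}^{(1)},\mathcal{U}^{(2)},\mathcal{U}^{(3)}$.

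Second, I would combine this per-iteration count with Theorem \ref{thm:3.1}. Since iterations $0,\ldots,T_g(\epsilon)-1$ contribute at most $(n+1)$ evaluations each, we have
\begin{equation*}
FE_{T_g(\epsilon)-1}\;\leq\;(n+1)\,T_g(\epsilon).
\end{equation*}
Substituting the explicit three-term upper bound on $T_g(\epsilon)$ provided by Theorem \ref{thm:3.1} and distributing the factor $(n+1)$ gives exactly the displayed inequality in the corollary.

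The only real obstacle, and it is a mild one, is the bookkeeping underpinning the per-iteration claim: one has to be careful to go through each of the four iteration types and verify which of the potential queries $F(x_k)$, $F(x_k+\tau_k e_j)$, $F(x_k+d_k)$ are actually performed, and to correctly attribute the single "carry-over" evaluation so that the uniform bound $(n+1)$ holds without double-counting. No further analytical ingredient is required; none of the estimates on $\psi_{p,r}$, $\eta_{p,r}$, $\Delta_k$ or $\tau_k$ developed earlier needs to be reopened, since all the dependence on those quantities is already packaged inside Theorem \ref{thm:3.1}.
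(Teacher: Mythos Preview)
Your proposal is correct and matches the paper's approach exactly: the paper itself simply states (in the sentence preceding the corollary) that each iteration of TRFD requires at most $(n+1)$ evaluations of $F(\,\cdot\,)$ and then multiplies by the iteration bound from Theorem~\ref{thm:3.1}. Your additional bookkeeping about which steps query $F$ and how $F(x_k)$ is carried over is more detailed than what the paper provides, but the underlying argument is identical.
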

\vspace{0.2cm}
\noindent In view of Corollary \ref{cor:3.1}, TRFD needs no more than 
\begin{equation*}
\mathcal{O}\left(n\,c_{2,p}(n)^{2}c_{p,2}(m)L_{h,p}L_{J}(f(x_{0})-f_{low})\epsilon^{-2}\right)
\label{eq:3.24}
\end{equation*}
evaluations of $F(\,\cdot\,)$ to find $x_{k}$ such that $\psi_{p,\Delta_{*}}(x_{k})\leq\epsilon$.
In what follows, Tables \ref{tab:1} and \ref{tab:2} specify this complexity bound for the L1 and Minimax problems with respect to different choices of the $p$-norm used in TRFD.

\begin{table}[h!]
\centering
\begin{tabular}{|c|c|c|c|c|}
\hline
\multicolumn{5}{|c|}{\textbf{L1 Problems}: case $h(z)=\|z\|_{1}$ $\forall z\in\mathbb{R}^{m}$} \\ \hline
$p$-norm in TRFD & $L_{h,p}$ & $c_{p,2}(m)$ & $c_{2,p}(n)$ & Evaluation Complexity Bound \\ \hline
$p=1$ & 1 & $\sqrt{m}$ & 1 & $\mathcal{O}\left(n\sqrt{m}L_{J}(f(x_{0})-f_{low})\epsilon^{-2}\right)$ \\ \hline
$p=2$ & $\sqrt{m}$ & 1 & 1 & $\mathcal{O}\left(n\sqrt{m}L_{J}(f(x_{0})-f_{low})\epsilon^{-2}\right)$ \\ \hline
$p=\infty$ & $m$ & 1 & $\sqrt{n}$ & $\mathcal{O}\left(n^{2}mL_{J}(f(x_{0})-f_{low})\epsilon^{-2}\right)$ \\ \hline
\end{tabular}
\caption{Complexity bounds for problems with objective function of the form  $f(\,\cdot\,)=\|F(\,\cdot\,)\|_{1}$.}
\label{tab:1}
\end{table}

\begin{table}[h!]
\centering
\begin{tabular}{|c|c|c|c|c|}
\hline
\multicolumn{5}{|c|}{\textbf{Minimax Problems}: case $h(z)=\max_{i=1,\ldots,m}\left\{z_{i}\right\}$ $\forall z\in\mathbb{R}^{m}$} \\ \hline
$p$-norm in TRFD & $L_{h,p}$ & $c_{p,2}(m)$ & $c_{2,p}(n)$ & Evaluation Complexity Bound \\ \hline
$p=1$ & 1 & $\sqrt{m}$ & 1 & $\mathcal{O}\left(n\sqrt{m}L_{J}(f(x_{0})-f_{low})\epsilon^{-2}\right)$ \\ \hline
$p=2$ & 1 & 1 & 1 & $\mathcal{O}\left(n L_{J}(f(x_{0})-f_{low})\epsilon^{-2}\right)$ \\ \hline
$p=\infty$ & 1 & 1 & $\sqrt{n}$ & $\mathcal{O}\left(n^{2} L_{J}(f(x_{0})-f_{low})\epsilon^{-2}\right)$ \\ \hline
\end{tabular}
\caption{Complexity bounds for problems with objective function of the form $f(\,\cdot\,)=\max_{i=1,\ldots,m}\left\{F_{i}(\,\cdot\,)\right\}$.}
\label{tab:2}
\end{table}
Notice that in both cases, considering TRFD with $p=1$ or $p=2$, we obtain evaluation complexity bounds of $\mathcal{O}\left(n\epsilon^{-2}\right)$, with linear dependence on the number of variables $n$. This represents an improvement over the bound of $\mathcal{O}\left(n^2\epsilon^{-2}\right)$ established in \cite{garmanjani2016trust} for a model-based derivative-free trust-region method for composite nonsmooth optimization.
\subsection{Worst-Case Complexity Bound for Convex Problems}

Let us consider the additional assumptions:
\\[0.2cm]
\textbf{A5.} $F_{i}(\,\cdot\,)$ is convex for $i=1,\ldots,m$.
\\[0.2cm]
\textbf{A6.} $h(\,\cdot\,)$ is monotone, i.e., $h(u)\leq h(v)$ if $u_{i}\leq v_{i}$ for $i=1,\ldots,m$.
\\[0.2cm]
\textbf{A7.} $f(\,\cdot\,)=h(F(\,\cdot\,))$ has a global minimizer $x^{*}$ on $\Omega$ and 
\begin{equation*}
    D_{0}\equiv\sup_{x\in\mathcal{L}_{f}(x_{0})}\left\{\|x-x^{*}\|_{p}\right\}<+\infty,
\end{equation*}
for $\mathcal{L}_{f}(x_{0})=\left\{x\in\Omega\,:\,f(x)\leq f(x_{0})\right\}$.
\vspace{0.2cm}
\\
\noindent The lemma below establishes the relationship between the stationarity measure and the functional residual when the reference radius $r$ is sufficiently large.
\vspace{0.2cm}
\begin{lemma}
    \label{lem:3.2.1}
    Suppose that A1, A2, A5, A6 and A7 hold, and let $x_k\in\mathcal{L}_{f}(x_{0})$. If $r\geq D_{0}$, then
    \begin{equation*}
        \psi_{p,r}(x_k)\geq\dfrac{1}{r}(f(x_k)-f(x^{*})).
    \end{equation*}
\end{lemma}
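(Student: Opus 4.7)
The plan is to exhibit a single feasible direction in the subproblem defining $\psi_{p,r}(x_k)$ that immediately yields the desired bound on the functional residual.

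First I would observe that, since $x^*$ is a global minimizer on $\Omega$, we have $f(x^*)\leq f(x_k)\leq f(x_0)$, so $x^*\in\mathcal{L}_{f}(x_0)$. By the definition of $D_0$ in A7 and the hypothesis $r\geq D_0$, this gives $\|x^*-x_k\|_p\leq D_0\leq r$. Combined with $x^*\in\Omega$, the vector $\tilde{s}:=x^*-x_k$ belongs to $(\Omega-\{x_k\})\cap B_p[0;r]$, and hence is feasible for the minimization appearing in \eqref{psir}.

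Next I would use assumptions A5 and A6 to control $h(F(x_k)+J_F(x_k)\tilde{s})$ by $f(x^*)$. By A5, each component $F_i$ is convex, so $F_i(x_k)+\nabla F_i(x_k)^\top(x^*-x_k)\leq F_i(x^*)$ for every $i\in\{1,\ldots,m\}$, which means $F(x_k)+J_F(x_k)\tilde{s}\leq F(x^*)$ componentwise. Applying the monotonicity of $h$ from A6 then yields
\begin{equation*}
h\bigl(F(x_k)+J_F(x_k)\tilde{s}\bigr)\leq h(F(x^*))=f(x^*).
\end{equation*}

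Finally, since $\tilde{s}$ is feasible for the inner minimization, the minimum value is at most $h(F(x_k)+J_F(x_k)\tilde{s})\leq f(x^*)$, and substituting into the definition of $\psi_{p,r}(x_k)$ gives
\begin{equation*}
\psi_{p,r}(x_k)=\frac{1}{r}\Bigl(h(F(x_k))-\min_{\substack{s\in\Omega-\{x_k\}\\ \|s\|_p\leq r}} h(F(x_k)+J_F(x_k)s)\Bigr)\geq\frac{1}{r}\bigl(f(x_k)-f(x^*)\bigr),
\end{equation*}
as required. There is no real obstacle here: the only subtlety is recognizing that A7 provides exactly the ingredient needed to make $x^*-x_k$ an admissible step in the subproblem, after which convexity of the components of $F$ and monotonicity of $h$ combine in the natural way.
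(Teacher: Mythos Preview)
Your proof is correct and follows essentially the same approach as the paper: both arguments use the feasibility of $\tilde{s}=x^{*}-x_{k}$ (via A7 and $r\geq D_{0}$), the convexity of each $F_{i}$ (A5), and the monotonicity of $h$ (A6) to bound the inner minimum by $f(x^{*})$. The only cosmetic difference is that the paper first establishes $h(F(x_{k}+s))\geq h(F(x_{k})+J_{F}(x_{k})s)$ for all feasible $s$ and then specializes to $s=\tilde{s}$, whereas you apply convexity and monotonicity directly at the single step $\tilde{s}$; your route is marginally more direct but the content is identical.
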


\begin{proof}
    By A2 and A5, for each $i\in\left\{1,\ldots,m\right\}$ we have
    \begin{equation*}
        F_{i}(x_k+s)\geq F(x_k)+\langle\nabla F_{i}(x_k),s\rangle,\quad\forall s\in\Omega-\left\{x_k\right\}.
    \end{equation*}
    Thus, it follows from A6 that
    \begin{equation*}
        h(F(x_k+s))\geq h(F(x_k)+J_{F}(x_k)s),\quad\forall s\in\Omega-\{x_k\},
    \end{equation*}
    and so
    \begin{equation}
        \min_{s\in \Omega - \{x_k\} \atop \|s\|_p \leq r}\,h(F(x_k+s))\geq\min_{s\in \Omega - \{x_k\} \atop \|s\|_p \leq r}\,h(F(x_k)+J_{F}(x_k)s).
        \label{eq:3.2.1}
    \end{equation}
    Let $s^{*}=x^{*}-x_k$. Then $s^{*}\in\Omega-\left\{x_k\right\}$ and, by A7 and $r\geq D_{0}$, we also have $\|s^{*}\|_{p}\leq D_{0}\leq r$. Therefore
    \begin{equation}
        \min_{s\in \Omega - \{x_k\} \atop \|s\|_p \leq r}\,h(F(x_k+s))=h(F(x_k+s^{*}))=h(F(x^{*}))=f(x^{*}).
        \label{eq:3.2.2}
    \end{equation}
    Combining (\ref{eq:3.2.1}) and (\ref{eq:3.2.2}), we obtain
    \begin{equation*}
    f(x^{*})\geq\min_{s\in \Omega - \{x_k\} \atop \|s\|_p \leq r}\,h(F(x_k)+J_{F}(x_k)s),
    \end{equation*}
    which implies that
    \begin{equation*}
        \psi_{p,r}(x_k)=\frac{1}{r}\left(h(F(x_k))-\min_{s\in \Omega - \{x_k\} \atop \|s\|_p \leq r}\,h(F(x_k)+J_{F}(x_k)s)\right)\geq\frac{1}{r}\left(f(x_k)-f(x^{*})\right),
    \end{equation*}
    which concludes the proof.
\end{proof}
\noindent The lemma below provides a lower bound on the approximate stationarity measure in terms of the functional residual.
\vspace{0.2cm}
\begin{lemma}
    \label{lem:3.2.2}
    Suppose that A1-A3 and A5-A7 hold, and let $\left\{x_{k}\right\}$ be generated by TRFD. If $\Delta_{*}\geq D_{0}$, then
    \begin{equation*}
        \eta_{p,\Delta_{*}}(x_{k};A_{k})\geq\dfrac{f(x_{k})-f(x^{*})}{\left(\frac{L_{J}}{\sigma}+1\right)\Delta_{*}}
    \end{equation*}
    whenever $k\notin\mathcal{U}^{(1)}$.
\end{lemma}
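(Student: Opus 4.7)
The plan is to combine the lower bound on the true stationarity measure $\psi_{p,\Delta_*}(x_k)$ from Lemma \ref{lem:3.2.1} with a careful upper bound on the error $|\psi_{p,\Delta_*}(x_k) - \eta_{p,\Delta_*}(x_k;A_k)|$, exploiting the fact that $k \notin \mathcal{U}^{(1)}$ forces $\eta_{p,\Delta_*}(x_k;A_k) \geq \epsilon/2$. The key observation is that the finite-difference stepsize $\tau_k$ only decreases along the iterations, so $\tau_k \leq \tau_0$ for every $k$, and the prescribed value of $\tau_0$ at Step 0 is calibrated exactly so that the approximation error is bounded by a quantity proportional to $\epsilon$, which in turn is controlled by $\eta_{p,\Delta_*}(x_k;A_k)$ itself.

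First I would note that since TRFD never increases $f$ (at unsuccessful iterations $x_{k+1}=x_k$ and at successful iterations $\rho_k \geq \alpha > 0$ forces $f(x_{k+1}) \leq f(x_k)$), every iterate satisfies $x_k \in \mathcal{L}_f(x_0)$. Then, since $\Delta_* \geq D_0$, Lemma \ref{lem:3.2.1} applied with $r=\Delta_*$ yields
\begin{equation*}
\psi_{p,\Delta_*}(x_k) \geq \frac{f(x_k)-f(x^*)}{\Delta_*}.
\end{equation*}

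Next, I would invoke Lemma \ref{psieta} at $x=x_k$ with $\tau = \tau_k$ to obtain
\begin{equation*}
\psi_{p,\Delta_*}(x_k) \leq \eta_{p,\Delta_*}(x_k;A_k) + \frac{L_{h,p}L_J c_{p,2}(m) c_{2,p}(n) \sqrt{n}}{2}\tau_k.
\end{equation*}
Since $\tau_k \leq \tau_0$ by the update rules of TRFD (the stepsize $\tau_k$ is either kept or halved), substituting the explicit value of $\tau_0$ from Step 0 simplifies the error term to $\frac{L_J}{2\sigma}\epsilon$. Thus
\begin{equation*}
\psi_{p,\Delta_*}(x_k) \leq \eta_{p,\Delta_*}(x_k;A_k) + \frac{L_J}{2\sigma}\epsilon.
\end{equation*}

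The main step, and the only one requiring any cleverness, is to use the hypothesis $k \notin \mathcal{U}^{(1)}$, which by the classification of iterations gives $\eta_{p,\Delta_*}(x_k;A_k) \geq \epsilon/2$, i.e., $\epsilon \leq 2\eta_{p,\Delta_*}(x_k;A_k)$. Plugging this into the previous inequality absorbs the error term into the main quantity:
\begin{equation*}
\psi_{p,\Delta_*}(x_k) \leq \eta_{p,\Delta_*}(x_k;A_k)\left(1 + \frac{L_J}{\sigma}\right).
\end{equation*}
Combining this with the lower bound on $\psi_{p,\Delta_*}(x_k)$ from the first step yields the desired inequality. The main obstacle is noticing this absorbing trick—without the hypothesis $k\notin\mathcal{U}^{(1)}$, the error term would not be controllable by $\eta_{p,\Delta_*}(x_k;A_k)$ itself, and the bound would degrade into an $\epsilon$-dependent form rather than the clean ratio $(f(x_k)-f(x^*))/((L_J/\sigma+1)\Delta_*)$.
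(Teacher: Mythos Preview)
Your proof is correct and follows essentially the same route as the paper: bound $\psi_{p,\Delta_*}(x_k)$ from above by $\eta_{p,\Delta_*}(x_k;A_k)+\tfrac{L_J}{2\sigma}\epsilon$ via Lemma~\ref{psieta} and $\tau_k\le\tau_0$, use $k\notin\mathcal{U}^{(1)}$ to replace $\epsilon$ by $2\eta_{p,\Delta_*}(x_k;A_k)$, and then combine with Lemma~\ref{lem:3.2.1}. Your explicit remark that $x_k\in\mathcal{L}_f(x_0)$ (needed to invoke Lemma~\ref{lem:3.2.1}) is a small clarification the paper leaves implicit.
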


\begin{proof}
    Suppose that $k\notin\mathcal{U}^{(1)}$. In this case, we have $\eta_{p,\Delta_{*}}(x_{k};A_{k})\geq\epsilon/2$. Then, it follows from Lemma \ref{psieta} and from the definition of $\tau_0$ at Step 0 of TRFD that 
    \begin{eqnarray*}
        \psi_{p,\Delta_{*}}(x_{k})&\leq &|\psi_{p,\Delta_{*}}(x_{k})-\eta_{p,\Delta_{*}}(x_{k};A_{k})|+|\eta_{p,\Delta_{*}}(x_{k};A_{k})|\\
        &\leq &\dfrac{L_{h,p}L_{J}c_{p,2}(m)c_{2,p}(n)\sqrt{n}}{2}\tau_{k}+\eta_{p,\Delta_{*}}(x_{k};A_{k})\\
        &\leq &\dfrac{L_{h,p}L_{J}c_{p,2}(m)c_{2,p}(n)\sqrt{n}}{2}\tau_{0}+\eta_{p,\Delta_{*}}(x_{k};A_{k})\\
        &=&\dfrac{L_{J}\epsilon}{2\sigma}+\eta_{p,\Delta_{*}}(x_{k};A_{k})\\
        &\leq&\left(\frac{L_{J}}{\sigma}+1\right)\eta_{p,\Delta_{*}}(x_{k};A_{k}).
    \end{eqnarray*}
    Therefore, by Lemma \ref{lem:3.2.1}, we obtain
    \begin{equation*}
        \eta_{p,\Delta_{*}}(x_{k};A_{k})\geq\dfrac{\psi_{p,\Delta_{*}}(x_{k})}{\left(\frac{L_{J}}{\sigma}+1\right)}\geq\dfrac{f(x_{k})-f(x^{*})}{\left(\frac{L_{J}}{\sigma}+1\right)\Delta_{*}}.
    \end{equation*}
    Therefore, the statement is proved.
\end{proof}
\noindent Next we establish an upper bound for $\frac{f(x_{k})-f(x^{*})}{\Delta_{k}}$ when the functional residual is sufficiently large.
\vspace{0.2cm}
\begin{lemma}
    \label{lem:3.2.3}
    Suppose that A1-A3 and A5-A7 hold, and let $\left\{x_{k}\right\}_{k=0}^{T}$ be generated by TRFD. If $\Delta_{*}\geq\Delta_{0}$ and
    \begin{equation}
        f(x_{k})-f(x^{*})>\Delta_{*}\epsilon\quad\text{for}\,\,k=0,\ldots,T-1,
        \label{eq:3.2.3}
    \end{equation}
    then
    \small
    \begin{equation}
    \left(\frac{1}{\Delta_{k}}\right)(f(x_{k})-f(x^{*}))\leq\max\left\{\left(\frac{1}{\Delta_{0}}\right)(f(x_{0})-f(x^{*})),\dfrac{2\left(\frac{L_{J}}{\sigma}+1\right)\Delta_{*}L_{h,p}L_{J}c_{p,2}(m)c_{2,p}(n)^{2}}{(1-\alpha)\theta}\right\}\equiv\beta
    \label{eq:3.2.4}
    \end{equation}
    \normalsize
    for $k=0,\ldots,T$.
\end{lemma}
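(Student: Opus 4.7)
The plan is to proceed by induction on $k$, tracking the ratio $R_k := (f(x_k) - f(x^*))/\Delta_k$ under the four iteration types of TRFD. The base case $k = 0$ is immediate because $R_0$ is precisely the first argument of the $\max$ defining $\beta$.

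For the inductive step, assume $R_k \leq \beta$ and classify iteration $k$. If $k \in \mathcal{U}^{(1)}$, both $x_k$ and $\Delta_k$ are unchanged, so $R_{k+1} = R_k \leq \beta$ trivially. If $k \in \mathcal{S}$, I would first observe that $d = 0$ is feasible in the trust-region subproblem \eqref{eq:subproblem}, making the predicted reduction $h(F(x_k)) - h(F(x_k) + A_k d_k^*)$ nonnegative; together with \eqref{sdcdf} and $\rho_k \geq \alpha > 0$, this yields $f(x_{k+1}) \leq f(x_k)$. Since also $\Delta_{k+1} = \min\{2\Delta_k, \Delta_*\} \geq \Delta_k$, the ratio cannot increase, so $R_{k+1} \leq R_k \leq \beta$.

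The crux, and the main obstacle, is the remaining case $k \in \mathcal{U}^{(2)} \cup \mathcal{U}^{(3)}$, where $x_{k+1} = x_k$ and $\Delta_{k+1} = \Delta_k/2$ so that $R_{k+1} = 2 R_k$ and the inductive hypothesis alone is not enough. To control $R_k$ in this case, I would chain three earlier results. First, the hypothesis $f(x_k) - f(x^*) > \Delta_* \epsilon$ combined with Lemma \ref{lem:3.2.1} applied with $r = \Delta_*$ gives $\psi_{p,\Delta_*}(x_k) > \epsilon$ (which implicitly uses $\Delta_* \geq D_0$, the natural companion of $\Delta_* \geq \Delta_0$ in the convex setting, and uses $x_k \in \mathcal{L}_f(x_0)$, which is automatic from the monotonicity established case-by-case above). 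Second, since $k \notin \mathcal{S}$, the contrapositive of Lemma \ref{lem:3.2} gives
\begin{equation*}
\Delta_k > \frac{(1-\alpha)\theta \, \eta_{p,\Delta_*}(x_k; A_k)}{L_{h,p} L_J c_{p,2}(m) c_{2,p}(n)^2}.
\end{equation*}
Third, since $k \notin \mathcal{U}^{(1)}$, Lemma \ref{lem:3.2.2} provides
\begin{equation*}
\eta_{p,\Delta_*}(x_k; A_k) \geq \frac{f(x_k) - f(x^*)}{(L_J/\sigma + 1)\Delta_*}.
\end{equation*}

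Substituting the third inequality into the second and solving for $R_k$ yields
\begin{equation*}
R_k < \frac{(L_J/\sigma + 1)\Delta_* L_{h,p} L_J c_{p,2}(m) c_{2,p}(n)^2}{(1-\alpha)\theta},
\end{equation*}
which is exactly half of the second argument of the $\max$ defining $\beta$. Doubling then gives $R_{k+1} < \beta$, closing the induction. The only delicate bookkeeping step is verifying that $x_k$ remains in $\mathcal{L}_f(x_0)$ for every $k$ so that Lemma \ref{lem:3.2.1} is always applicable, but this follows directly from the monotonicity $f(x_{k+1}) \leq f(x_k)$ recorded in all four cases.
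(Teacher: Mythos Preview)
Your proposal is correct and follows essentially the same inductive argument as the paper's proof, with the same case split and the same chaining of Lemmas~\ref{lem:3.2.1}, \ref{lem:3.2}, and~\ref{lem:3.2.2} in the unsuccessful case. You also correctly flag that the argument actually requires $\Delta_{*}\geq D_{0}$ (not merely $\Delta_{*}\geq\Delta_{0}$ as stated), which the paper's own proof invokes as well without adjusting the lemma's hypothesis.
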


\begin{proof}
    By the definition of $\beta$, (\ref{eq:3.2.4}) is true for $k=0$. Suppose that (\ref{eq:3.2.4}) is true for some $k\in\left\{0,\ldots,T-1\right\}$. Let us show that it is also true for $k+1$.
    \\[0.3cm]
    \noindent\textbf{Case 1:} $k\in\mathcal{U}^{(1)}\cup\mathcal{S}$
    \\[0.2cm]
    In this case, we have $\Delta_{k+1}\geq\Delta_{k}$. Since $f(x_{k+1})\leq f(x_{k})$, it follows that
    \begin{equation*}
    \left(\frac{1}{\Delta_{k+1}}\right)(f(x_{k+1})-f(x^{*}))\leq\left(\frac{1}{\Delta_{k}}\right)(f(x_{k})-f(x^{*}))\leq\beta,
    \end{equation*}
    where the last inequality is the induction assumption. Therefore, (\ref{eq:3.2.4}) holds for $k+1$ in this case.
    \\[0.2cm]
    \noindent\textbf{Case 2:} $k\in\mathcal{U}^{(2)}\cup\mathcal{U}^{(3)}$
    \\[0.2cm]
    In this case we have 
    \begin{equation}
          \Delta_{k+1}=\frac{1}{2}\Delta_{k}.
        \label{eq:3.2.5}
    \end{equation}
    In addition, in view of (\ref{eq:3.2.3}) and $\Delta_{*}\geq D_{0}$, it follows from Lemma \ref{lem:3.2.1} that $\psi_{p,\Delta_{*}}(x_{k})>\epsilon$. Therefore, we must have
    \begin{equation}
        \Delta_{k}>\dfrac{(1-\alpha)\theta\eta_{p,\Delta_{*}}(x_{k};A_{k})}{L_{h,p}L_{J}c_{p,2}(m)c_{2,p}(n)^{2}}
        \label{eq:3.2.6}
    \end{equation}
    since otherwise, by Lemma \ref{lem:3.2}, we would have $k\in\mathcal{S}$, contradicting our assumption that $k\in\mathcal{U}^{(2)}\cup\mathcal{U}^{(3)}$. Notice that (\ref{eq:3.2.6}) is equivalent to
    \begin{equation*}
     \left(\frac{1}{\Delta_{k}}\right)\eta_{p,\Delta_{*}}(x_{k};A_{k})<\dfrac{L_{h,p}L_{J}c_{p,2}(m)c_{2,p}(n)^{2}}{(1-\alpha)\theta}.   
    \end{equation*}
    Finally, it follows from (\ref{eq:3.2.5}), Lemma \ref{lem:3.2.2} and (\ref{eq:3.2.6}) that
    \begin{eqnarray*}
        \left(\frac{1}{\Delta_{k+1}}\right)(f(x_{k+1})-f(x^{*}))&=&\left(\frac{2}{\Delta_{k}}\right)(f(x_{k+1})-f(x^{*}))\leq \left(\frac{2}{\Delta_{k}}\right)(f(x_{k})-f(x^{*}))\\
        &\leq &\dfrac{2\left(\frac{L_{J}}{\sigma}+1\right)\Delta_{*}}{\Delta_{k}}\eta_{p,\Delta_{*}}(x_{k};A_{k})\\
        & < & 2\left(\frac{L_{J}}{\sigma}+1\right)\Delta_{*}\dfrac{L_{h,p}L_{J}c_{p,2}(m)c_{2,p}(n)^{2}}{(1-\alpha)\theta}\\
        & = &\dfrac{2\left(\frac{L_{J}}{\sigma}+1\right)\Delta_{*}L_{h,p}L_{J}c_{p,2}(m)c_{2,p}(n)^{2}}{(1-\alpha)\theta}\\
        &\leq &\beta,
    \end{eqnarray*}
that is, (\ref{eq:3.2.4}) also holds for $k+1$ in this case.
\end{proof}

Let 
\begin{equation}
T_{f}(\epsilon)=\inf\left\{k\in\mathbb{N}\,:\,f(x_{k})-f(x^{*})\leq\Delta_{*}\epsilon\right\}
\label{eq:3.2.7}
\end{equation}
be the index of the first iteration in which $\left\{x_{k}\right\}_{k\geq 0}$ reaches a $\Delta_{*}\epsilon$-approximate solution of (\ref{eq:first}), if it exists. Our goal is to establish a finite upper bound for $T_{f}(\epsilon)$. In this context, the lemma below provides an upper bound on $\left|\mathcal{S}_{T_{f}(\epsilon)-1}\right|$.
\vspace{0.2cm}
\begin{lemma}
    \label{lem:3.2.4}
    Suppose that A1-A7 hold. Given $\epsilon>0$, if $T_{f}(\epsilon)\geq 2$ and $\Delta_{*}\geq D_{0}$, then
    \begin{equation}
    \left|\mathcal{S}_{T_{f}(\epsilon)-1}\right|\leq 1+\dfrac{\left(\frac{L_{J}}{\sigma}+1\right)\beta}{\alpha\theta}\epsilon^{-1},
    \label{eq:3.2.8}
    \end{equation}
    where $\beta$ is defined in \eqref{eq:3.2.4}.
\end{lemma}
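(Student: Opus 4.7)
The plan is to prove Lemma \ref{lem:3.2.4} using the classical reciprocal-of-residual telescoping technique: show each successful iteration decreases $1/r_k$, where $r_k := f(x_k)-f(x^{*})$, by at least a fixed constant, then sum over successful iterations before reaching the target accuracy.

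First, I would derive a quadratic-in-residual sufficient decrease on $\mathcal{S}_{T_f(\epsilon)-1}$. The chain used to obtain \eqref{eq:3.12} in the proof of Lemma \ref{lem:3.4} only uses $\rho_k\geq\alpha$, condition \eqref{sdcdf} and Lemma \ref{lem:last}, so for every $k\in\mathcal{S}$ it still gives $r_k-r_{k+1}\geq\alpha\theta\Delta_k\eta_{p,\Delta_*}(x_k;A_k)$. Since $k\in\mathcal{S}$ forces $k\notin\mathcal{U}^{(1)}$, Lemma \ref{lem:3.2.2} (applicable since $\Delta_*\geq D_0$) gives $\eta_{p,\Delta_*}(x_k;A_k)\geq r_k/((L_J/\sigma+1)\Delta_*)$. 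Moreover, $\Delta_*\geq\Delta_0$ by Step 0 of TRFD and hypothesis \eqref{eq:3.2.3} holds for $k=0,\ldots,T_f(\epsilon)-2$ directly from the definition of $T_f(\epsilon)$, so Lemma \ref{lem:3.2.3} applied with $T=T_f(\epsilon)-1$ gives $\Delta_k\geq r_k/\beta$ on the whole range $\{0,\ldots,T_f(\epsilon)-1\}$. Combining these bounds, for every $k\in\mathcal{S}_{T_f(\epsilon)-1}$,
\[
r_k-r_{k+1}\geq\frac{\alpha\theta}{(L_J/\sigma+1)\beta\Delta_*}\,r_k^{2}\,=:\,c\,r_k^{2}.
\]

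Second, I would convert this into a reciprocal-difference bound. The iterate update guarantees $r_{k+1}\leq r_k$ for every $k$ (trivially when $k\notin\mathcal{S}$ since $x_{k+1}=x_k$, and by the displayed descent otherwise), hence for $k\in\mathcal{S}$,
\[
\frac{1}{r_{k+1}}-\frac{1}{r_k}=\frac{r_k-r_{k+1}}{r_k r_{k+1}}\geq\frac{c r_k^{2}}{r_k r_{k+1}}\geq c,
\]
while the difference vanishes for $k\notin\mathcal{S}$. Telescoping from $0$ up to $T_f(\epsilon)-2$ yields
\[
\frac{1}{r_{T_f(\epsilon)-1}}\geq\frac{1}{r_0}+c\,|\mathcal{S}_{T_f(\epsilon)-2}|\geq c\,|\mathcal{S}_{T_f(\epsilon)-2}|.
\]
Since $T_f(\epsilon)-1<T_f(\epsilon)$, definition \eqref{eq:3.2.7} gives $r_{T_f(\epsilon)-1}>\Delta_*\epsilon$, so $1/r_{T_f(\epsilon)-1}<1/(\Delta_*\epsilon)$ and therefore $|\mathcal{S}_{T_f(\epsilon)-2}|\leq 1/(c\Delta_*\epsilon)=(L_J/\sigma+1)\beta/(\alpha\theta)\cdot\epsilon^{-1}$. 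The bound \eqref{eq:3.2.8} then follows from $|\mathcal{S}_{T_f(\epsilon)-1}|\leq|\mathcal{S}_{T_f(\epsilon)-2}|+1$.

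The only delicate point is to stop the telescope one step early at $T_f(\epsilon)-2$ rather than $T_f(\epsilon)-1$: by definition of $T_f(\epsilon)$ we only know $r_{T_f(\epsilon)}\leq\Delta_*\epsilon$, so $1/r_{T_f(\epsilon)}$ cannot be bounded above, whereas $1/r_{T_f(\epsilon)-1}$ can. This one-step trim is precisely the origin of the additive $+1$ in \eqref{eq:3.2.8}.
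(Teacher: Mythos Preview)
Your proposal is correct and follows essentially the same approach as the paper: the paper likewise derives the quadratic decrease $\delta_k-\delta_{k+1}\geq c\,\delta_k^{2}$ on $\mathcal{S}_{T_f(\epsilon)-2}$ by chaining \eqref{sdcdf}, Lemma \ref{lem:last}, Lemma \ref{lem:3.2.2} and Lemma \ref{lem:3.2.3}, converts this to $\tfrac{1}{\delta_{k+1}}-\tfrac{1}{\delta_k}\geq c$, telescopes to $T_f(\epsilon)-1$, uses $\delta_{T_f(\epsilon)-1}>\Delta_*\epsilon$, and adds the final $+1$. Your explicit remark on why the telescope must stop one step early is exactly the point.
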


\begin{proof}
    Let $k\in\mathcal{S}_{T_{f}(\epsilon)-2}$. By Lemmas \ref{lem:last}, \ref{lem:3.2.2} and \ref{lem:3.2.3}, we have
    \begin{eqnarray}
    f(x_{k})-f(x_{k+1})&\geq &\alpha\left[h(F(x_{k}))-h(F(x_{k})+A_{k}d_{k})\right]\nonumber\\
    &\geq &\alpha\theta\left[h(F(x_{k}))-h(F(x_{k})+A_{k}d_{k}^{*})\right]\nonumber\\
    &=&\alpha\theta\Delta_{k}\eta_{p,\Delta_{k}}(x_{k};A_{k})\nonumber\\
    &\geq &\alpha\theta\Delta_k\eta_{p,\Delta_{*}}(x_{k};A_{k})\nonumber\\
    &\geq &\alpha\theta\Delta_{k}\dfrac{f(x_{k})-f(x^{*})}{\left(\frac{L_{J}}{\sigma}+1\right)\Delta_{*}}\nonumber\\
    &=&\dfrac{\alpha\theta (f(x_{k})-f(x^{*}))^{2}}{\left(\frac{L_{J}}{\sigma}+1\right)\Delta_{*}\left(\frac{1}{\Delta_{k}}\right)(f(x_{k})-f(x^{*}))}\nonumber\\
    &\geq &\dfrac{\alpha\theta (f(x_{k})-f(x^{*}))^{2}}{\left(\frac{L_{J}}{\sigma}+1\right)\Delta_{*}\beta}.
    \label{eq:3.2.9}
    \end{eqnarray}
    Denoting $\delta_{k}=f(x_{k})-f(x^{*})$, (\ref{eq:3.2.9}) becomes
    \begin{equation*}
    \delta_{k}-\delta_{k+1}\geq\dfrac{\alpha\theta}{\left(\frac{L_{J}}{\sigma}+1\right)\Delta_{*}\beta}\delta_{k}^{2}.
    \end{equation*}
    Consequently,
    \begin{equation}
    \dfrac{1}{\delta_{k+1}}-\dfrac{1}{\delta_{k}}=\dfrac{\delta_{k}-\delta_{k+1}}{\delta_{k}\delta_{k+1}}\geq\dfrac{\frac{\alpha\theta}{\left(\frac{L_{J}}{\sigma}+1\right)\Delta_{*}\beta}\delta_{k}^{2}}{\delta_{k}^{2}}=\dfrac{\alpha\theta}{\left(\frac{L_{J}}{\sigma}+1\right)\Delta_{*}\beta}.
    \label{eq:3.2.10}    
    \end{equation}
    Since $\delta_{k+1}=\delta_{k}$ for any $k\in \{0,...,T_f(\epsilon)-2\}\setminus \mathcal{S}_{T_{f}(\epsilon)-2}$, it follows from (\ref{eq:3.2.10}) that
    \begin{eqnarray*}
        \dfrac{1}{\delta_{T_{f}(\epsilon)-1}}-\dfrac{1}{\delta_{0}}&=&\sum_{k=0}^{T_{f}(\epsilon)-2}\dfrac{1}{\delta_{k+1}}-\dfrac{1}{\delta_{k}}=\sum_{k\in\mathcal{S}_{T_{f}(\epsilon)-2}}\dfrac{1}{\delta_{k+1}}-\dfrac{1}{\delta_{k}}\\
        &\geq &\left|\mathcal{S}_{T_{f}(\epsilon)-2}\right|\dfrac{\alpha\theta}{\left(\frac{L_{J}}{\sigma}+1\right)\Delta_{*}\beta}.
    \end{eqnarray*}
    Therefore
    \begin{equation*}
        \Delta_{*}\epsilon<f(x_{T_{f}(\epsilon)-1})-f(x^{*})=\delta_{T_f(\epsilon)-1}\leq\dfrac{\left(\frac{L_{J}}{\sigma}+1\right)\Delta_{*}\beta}{\alpha\theta\left|\mathcal{S}_{T_{f}(\epsilon)-2}\right|},
    \end{equation*}
    which implies that
    \begin{equation*}
        \left|\mathcal{S}_{T_{f}(\epsilon)-1}\right|\leq 1+\left|\mathcal{S}_{T_{f}(\epsilon)-2}\right|\leq 1+\dfrac{\left(\frac{L_{J}}{\sigma}+1\right)\beta}{\alpha\theta}\epsilon^{-1},
    \end{equation*}
    that is, \eqref{eq:3.2.8} is true.
\end{proof}
\noindent The next lemma establishes the relationship between $T_{f}(\epsilon)$ and $T_{g}(\epsilon)$.
\vspace{0.2cm}
\begin{lemma}\label{lem:tf}
    Suppose that A2, A5, A6 and A7 hold, and let $T_f(\epsilon)$ and $T_{g}(\epsilon)$ be defined by \eqref{eq:3.2.7} and \eqref{eq:hitting}, respectively. If $\Delta_* \geq D_0$, then $T_f(\epsilon) \leq T_{g}(\epsilon)$.
\end{lemma}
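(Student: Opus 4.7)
The plan is to prove this by a direct one-step argument leveraging Lemma \ref{lem:3.2.1}, which already supplies the bridge between the stationarity measure $\psi_{p,\Delta_*}$ and the functional residual $f(x_k)-f(x^*)$ whenever the iterate lies in the level set $\mathcal{L}_f(x_0)$.

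First I would dispatch the trivial case $T_g(\epsilon)=\infty$, for which the inequality $T_f(\epsilon)\leq T_g(\epsilon)$ holds vacuously. So assume $T_g(\epsilon)<\infty$ and set $k=T_g(\epsilon)$, so that by \eqref{eq:hitting} we have $\psi_{p,\Delta_*}(x_k)\leq\epsilon$.

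Next I would verify that $\{f(x_j)\}_{j=0}^{k}$ is nonincreasing, which ensures $x_k\in\mathcal{L}_f(x_0)$. For $j\notin\mathcal{S}$, the algorithm sets $x_{j+1}=x_j$, so $f(x_{j+1})=f(x_j)$. For $j\in\mathcal{S}$, the ratio test together with \eqref{sdcdf} and the feasibility of $d=0$ in \eqref{eq:subproblem} give $h(F(x_j))-h(F(x_j)+A_jd_j)\geq 0$, and then $\rho_j\geq\alpha>0$ forces $f(x_{j+1})=h(F(x_j+d_j))\leq h(F(x_j))=f(x_j)$. Hence $x_k\in\mathcal{L}_f(x_0)$.

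With $\Delta_*\geq D_0$ and $x_k\in\mathcal{L}_f(x_0)$, Lemma \ref{lem:3.2.1} yields
\begin{equation*}
\frac{f(x_k)-f(x^*)}{\Delta_*}\leq\psi_{p,\Delta_*}(x_k)\leq\epsilon,
\end{equation*}
so $f(x_k)-f(x^*)\leq\Delta_*\epsilon$ and the definition \eqref{eq:3.2.7} gives $T_f(\epsilon)\leq k=T_g(\epsilon)$. The only subtle point, and therefore the one I would write out carefully, is the monotonicity $f(x_{j+1})\leq f(x_j)$ on successful iterations, since it depends on the subproblem admitting the nonnegative decrease $h(F(x_j))-h(F(x_j)+A_jd_j^*)\geq 0$; everything else is a one-line application of an already-established lemma.
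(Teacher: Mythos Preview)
Your proof is correct and follows essentially the same route as the paper: both arguments reduce immediately to Lemma~\ref{lem:3.2.1} applied at the iterate $x_{T_g(\epsilon)}$, the paper phrasing it as a one-line contradiction while you give the direct version. Your explicit verification that $x_{T_g(\epsilon)}\in\mathcal{L}_f(x_0)$ via monotonicity of $\{f(x_j)\}$ is a detail the paper leaves implicit, so if anything your write-up is slightly more careful.
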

\begin{proof}
    Suppose by contradiction that $T_f(\epsilon) > T_{g}(\epsilon).$ In this case, by $\Delta_* \geq D_0$ and Lemma \ref{lem:3.2.1}, we would arrive at the contradiction 
    \begin{equation*}
        \epsilon<\frac{1}{\Delta_{*}}(f(x_{T_{g}(\epsilon)})-f(x^{*})) \leq \psi_{p,\Delta_*}(x_{T_{g}(\epsilon)})\leq\epsilon.
    \end{equation*}
    Therefore, we must have $T_{f}(\epsilon)\leq T_{g}(\epsilon)$.
\end{proof}
\vspace{0.2cm}
\noindent The following theorem gives an upper bound on the number of iterations required by TRFD to reach a $\Delta_{*}\epsilon$-approximate solution of (\ref{eq:first}).
\vspace{0.2cm}
\begin{theorem}
    Suppose that A1-A7 and let $T_f(\epsilon)$ be defined by \eqref{eq:3.2.7}. If $\Delta_* \geq D_0$, then
    \begin{align}
        T_f(\epsilon) \leq & 2\left[1+\frac{(\frac{L_J}{\sigma}+1)\beta}{\alpha\theta}\epsilon^{-1}\right] + \left\lceil\left|\log_{2}\left(\dfrac{4\max\left\{\sigma,L_{J}\right\}c_{2,p}(n)}{\sigma(1-\alpha)\theta}\right)\right|\right\rceil \notag \\ &+ \log_{2}\left(\dfrac{4L_{h,p}\max\left\{\sigma,L_{J}\right\}c_{p,2}(m)c_{2,p}(n)^{2}\Delta_{0}}{(1-\alpha)\theta}\epsilon^{-1}\right),
        \label{eq:3.14.1}
    \end{align}
    where $\beta$ is defined in \eqref{eq:3.2.4}.
    \label{thm:convex}
\end{theorem}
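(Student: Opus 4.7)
The plan is to mirror the proof of Theorem \ref{thm:3.1} (the nonconvex case) and recycle as much of its machinery as possible. First, the case $T_f(\epsilon)\leq 1$ is trivial since the right-hand side of \eqref{eq:3.14.1} is nonnegative (indeed, the first bracketed term alone exceeds $2$). So assume $T_f(\epsilon)\geq 2$. Then, exactly as in \eqref{eq:motivation}, partition $\{0,\ldots,T_f(\epsilon)-1\}$ into the four iteration classes and bound
\begin{equation*}
T_f(\epsilon)\leq |\mathcal{S}_{T_f(\epsilon)-1}|+|\mathcal{U}^{(1)}_{T_f(\epsilon)-1}\cup\mathcal{U}^{(3)}_{T_f(\epsilon)-1}|+|\mathcal{U}^{(2)}_{T_f(\epsilon)-1}\cup\mathcal{U}^{(3)}_{T_f(\epsilon)-1}|.
\end{equation*}

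The successful-iteration count is controlled directly by Lemma \ref{lem:3.2.4} (which requires $\Delta_*\geq D_0$, as assumed), giving $|\mathcal{S}_{T_f(\epsilon)-1}|\leq 1+\frac{(L_J/\sigma+1)\beta}{\alpha\theta}\epsilon^{-1}$. To reuse Lemmas \ref{lem:3.6} and \ref{lem:3.5}, I invoke Lemma \ref{lem:tf}: under the hypothesis $\Delta_*\geq D_0$, we have $T_f(\epsilon)\leq T_g(\epsilon)$. Hence $T:=T_f(\epsilon)$ is an admissible index in both Lemma \ref{lem:3.6} (which needs $T\in\{2,\ldots,T_g(\epsilon)\}$) and Lemma \ref{lem:3.5} (which needs $T\in\{1,\ldots,T_g(\epsilon)\}$). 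Applying Lemma \ref{lem:3.6} together with the identity \eqref{eq:3.22} gives
\begin{equation*}
|\mathcal{U}^{(1)}_{T_f(\epsilon)-1}\cup\mathcal{U}^{(3)}_{T_f(\epsilon)-1}|\leq\left\lceil\left|\log_2\left(\dfrac{4\max\{\sigma,L_J\}c_{2,p}(n)}{\sigma(1-\alpha)\theta}\right)\right|\right\rceil,
\end{equation*}
and Lemma \ref{lem:3.5} yields
\begin{equation*}
|\mathcal{U}^{(2)}_{T_f(\epsilon)-1}\cup\mathcal{U}^{(3)}_{T_f(\epsilon)-1}|\leq\log_2\!\left(\dfrac{4L_{h,p}\max\{\sigma,L_J\}c_{p,2}(m)c_{2,p}(n)^2\Delta_0}{(1-\alpha)\theta}\epsilon^{-1}\right)+|\mathcal{S}_{T_f(\epsilon)-1}|.
\end{equation*}

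Summing the three bounds, the successful-iteration count appears twice (once directly, once through Lemma \ref{lem:3.5}), which is precisely where the factor of $2$ in \eqref{eq:3.14.1} comes from. Substituting the estimate from Lemma \ref{lem:3.2.4} for $2|\mathcal{S}_{T_f(\epsilon)-1}|$ delivers exactly \eqref{eq:3.14.1}, completing the proof.

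There is no real obstacle here: the hard analytical work (the sublinear $\mathcal{O}(\epsilon^{-1})$ decay of the functional residual on successful steps, proved via the telescoping $1/\delta_{k+1}-1/\delta_k\geq\text{const}$ argument, and the bookkeeping on the trust-region radii and finite-difference stepsizes) has already been carried out in Lemmas \ref{lem:3.2.4}, \ref{lem:3.6}, and \ref{lem:3.5}. The only subtlety is ensuring that $T_f(\epsilon)$ lies in the valid ranges of those lemmas, which is handled cleanly by Lemma \ref{lem:tf} under the assumption $\Delta_*\geq D_0$.
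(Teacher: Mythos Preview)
Your proposal is correct and follows essentially the same approach as the paper's own proof: handle the trivial case $T_f(\epsilon)\leq 1$, decompose $T_f(\epsilon)$ into the three iteration-class counts, invoke Lemma~\ref{lem:tf} to ensure $T_f(\epsilon)\leq T_g(\epsilon)$ so that Lemmas~\ref{lem:3.6} and~\ref{lem:3.5} apply with $T=T_f(\epsilon)$, and then combine these with Lemma~\ref{lem:3.2.4} and \eqref{eq:3.22}. Your explicit remark that $|\mathcal{S}_{T_f(\epsilon)-1}|$ is counted twice (once directly and once via Lemma~\ref{lem:3.5}), producing the factor of $2$, is a nice clarification the paper leaves implicit.
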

\begin{proof}
    If $T_f(\epsilon) \leq 1$, then \eqref{eq:3.14.1} is true. Let us assume that $T_f(\epsilon) \geq 2$. As in the proof of Theorem \ref{thm:3.1}, we have
    \begin{equation}
        T_f(\epsilon) \leq \left|\mathcal{S}_{T_f(\epsilon)-1}\right|+\left|\mathcal{U}_{T_f(\epsilon)-1}^{(1)}\cup\mathcal{U}_{T_f(\epsilon)-1}^{(3)}\right|+\left|\mathcal{U}_{T_f(\epsilon)-1}^{(2)}\cup \mathcal{U}_{T_f(\epsilon)-1}^{(3)}\right|.\label{eq:3.14.2}
    \end{equation}
    By Lemma \ref{lem:tf}, we have $T_f(\epsilon) \leq T_g(\epsilon)$. Thus, it follows from Lemmas \ref{lem:3.6} and \ref{lem:3.5} that
    \begin{equation}
\left|\mathcal{U}_{T_f(\epsilon)-1}^{(1)}\cup\mathcal{U}_{T_f(\epsilon)-1}^{(3)}\right|\leq\left\lceil\left|\log_{2}\left(\frac{\tau_{0}\sqrt{n}}{\Delta_{\min}(\epsilon)}\right)\right|\right\rceil\label{eq:3.14.4}
\end{equation}
and
\begin{equation}
\left|\mathcal{U}_{T_f(\epsilon)-1}^{(2)}\cup\,\mathcal{U}_{T_f(\epsilon)-1}^{(3)}\right|\leq\log_{2}\left(\dfrac{4L_{h,p}\max\left\{\sigma,L_{J}\right\}c_{p,2}(m)c_{2,p}(n)^{2}\Delta_{0}}{(1-\alpha)\theta}\epsilon^{-1}\right)+|\mathcal{S}_{T_f(\epsilon)-1}|,\label{eq:3.14.3}
    \end{equation}
where $\Delta_{\min}(\epsilon)$ is defined in (\ref{eq:3.7}). Then, combining \eqref{eq:3.14.2}, Lemma \ref{lem:3.2.4}, \eqref{eq:3.14.4}, \eqref{eq:3.14.3} and \eqref{eq:3.22}, we conclude that \eqref{eq:3.14.1} is true.
\end{proof}
\noindent Since each iteration of TRFD requires at most $(n+1)$ evaluations of $F(\,\cdot\,)$, from Theorem \ref{thm:convex} we obtain the following upper bound on the total number of evaluations of $F(\,\cdot\,)$ required by TRFD to find a $\Delta_{*}\epsilon$-approximate solution of (\ref{eq:first}). 
\vspace{0.2cm}
\begin{corollary}\label{coro:3.15}
    Suppose that A1-A7 hold and let $FE_{T_f(\epsilon)-1}$ be the total number of function evaluations executed by TRFD up to the $(T_f(\epsilon)-1)$-$st$ iteration. If $\Delta_* \geq D_0$, then
    \begin{eqnarray*}
FE_{T_f(\epsilon)-1}&\leq &(n+1)\left[2\left[1+\frac{(\frac{L_J}{\sigma}+1)\beta}{\alpha\theta}\epsilon^{-1}\right]+\left\lceil\left|\log_{2}\left(\dfrac{4\max\left\{\sigma,L_{J}\right\}c_{2,p}(n)}{\sigma(1-\alpha)\theta}\right)\right|\right\rceil\right.\\
&    &\left.+\log_{2}\left(\dfrac{4L_{h,p}\max\left\{\sigma,L_{J}\right\}c_{p,2}(m)c_{2,p}(n)^{2}\Delta_{0}}{(1-\alpha)\theta}\epsilon^{-1}\right)\right].
\end{eqnarray*}
\end{corollary}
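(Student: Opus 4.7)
The plan is to combine the iteration complexity bound from Theorem \ref{thm:convex} with a straightforward per-iteration accounting of function evaluations. The statement follows essentially by multiplication, so the main work is justifying the factor $(n+1)$ as a uniform upper bound on the number of evaluations of $F(\,\cdot\,)$ performed at each iteration of TRFD.

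First, I would argue the per-iteration evaluation count. At Step 1 of an iteration $k$, constructing the forward-difference Jacobian $A_k$ requires evaluating $F(x_k + \tau_k e_j)$ for $j=1,\ldots,n$; the value $F(x_k)$ itself is either produced at iteration $k-1$ (when $x_k = x_{k-1} + d_{k-1}$ at a successful step) or is inherited unchanged when $x_k = x_{k-1}$. At Step 4, computing the ratio $\rho_k$ in (\ref{ratio_generalset}) requires the single additional evaluation $F(x_k + d_k)$. Iterations of type $\mathcal{U}^{(1)}$ bypass Step 4 (saving the $F(x_k+d_k)$ evaluation); iterations of type $\mathcal{U}^{(2)}$ and the first branch of Step 5 reuse $A_k$ and thus avoid the $n$ finite-difference evaluations at the next iteration. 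In every case, the number of new evaluations performed during a single iteration is at most $n+1$.

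Second, I would apply Theorem \ref{thm:convex}, which under A1-A7 and $\Delta_* \geq D_0$ bounds $T_f(\epsilon)$ by the right-hand side of (\ref{eq:3.14.1}). Multiplying this iteration bound by $(n+1)$ yields the advertised bound on $FE_{T_f(\epsilon)-1}$.

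The main obstacle is purely bookkeeping: ensuring that the $(n+1)$-per-iteration count is honest, given that TRFD's control flow distinguishes four iteration types and that some of them reuse $A_k$ or $F(x_k)$ from preceding iterations. Since any reuse only decreases the per-iteration cost, the uniform bound $n+1$ is safely conservative. The corollary then follows directly, with no further analytic content beyond Theorem \ref{thm:convex}.
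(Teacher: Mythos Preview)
Your proposal is correct and matches the paper's approach exactly: the paper derives this corollary simply by noting that each iteration of TRFD requires at most $(n+1)$ evaluations of $F(\,\cdot\,)$ and multiplying the iteration bound from Theorem~\ref{thm:convex} by this factor. Your per-iteration accounting is in fact more detailed than what the paper provides.
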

In view of Corollary \ref{coro:3.15} and the definition of $\beta$ in \eqref{eq:3.2.4}, if $h(\,\cdot\,)$ is monotone and the components $F_i(\,\cdot\,)$ are convex, then TRFD, with a sufficiently large $\Delta_*$, needs no more than $$\mathcal{O}\left(n\,c_{2,p}(n)^{2}c_{p,2}(m)L_{h,p}L_{J}\Delta_*\epsilon^{-1}\right)$$ function evaluations to find $x_k$ such that $$f(x_k)-f(x^*) \leq \Delta_*\epsilon.$$ Thus, given $\epsilon_f > 0$, if we use TRFD with $\epsilon = \epsilon_f/\Delta_*$, then it will need no more than $$\mathcal{O}\left(n\,c_{2,p}(n)^{2}c_{p,2}(m)L_{h,p}L_{J}\Delta_*^2\epsilon_f^{-1}\right)$$ function evaluations to find $x_k$ such that $$f(x_k)-f(x^*) \leq \epsilon_f.$$ Table \ref{tab:3} below specifies the complexity bound for the Minimax problem, which is a composite nonsmooth problem of the form \eqref{eq:first} whose function $h(\,\cdot\,)$ is monotone.
\newpage
\begin{table}[h!]
\centering
\begin{tabular}{|c|c|c|c|c|}
\hline
\multicolumn{5}{|c|}{\textbf{Minimax Problems}: case $h(z)=\max_{i=1,\ldots,m}\left\{z_{i}\right\}$ $\forall z\in\mathbb{R}^{m}$} \\ \hline
$p$-norm in TRFD & $L_{h,p}$ & $c_{p,2}(m)$ & $c_{2,p}(n)$ & Evaluation Complexity Bound \\ \hline
$p=1$ & 1 & $\sqrt{m}$ & 1 & $\mathcal{O}\left(n\sqrt{m} L_{J}\Delta_*^2\epsilon_f^{-1}\right)$ \\ \hline
$p=2$ & 1 & 1 & 1 & $\mathcal{O}\left(n L_{J}\Delta_*^2\epsilon_f^{-1}\right)$ \\ \hline
$p=\infty$ & 1 & 1 & $\sqrt{n}$ & $\mathcal{O}\left(n^{2} L_{J}\Delta_*^2\epsilon_f^{-1}\right)$ \\ \hline
\end{tabular}
\caption{Complexity bounds for problems with objective function of the form $f(\,\cdot\,)=\max_{i=1,\ldots,m}\left\{F_{i}(\,\cdot\,)\right\}$.}
\label{tab:3}
\end{table}
When $\Omega$ is a polyhedron, for $p=1$ and $p=\infty$, the computation of $\eta_{p,\Delta_{*}}(x_{k};\Delta_{k})$ and $d_{k}$ in TRFD can be performed by solving linear programming problems. The complexity bounds in Table \ref{tab:3} suggest that one should use $p=1$ when $\sqrt{m}< n$, and $p=\infty$ otherwise. On the other hand, the best complexity bound, of $\mathcal{O}\left(n\epsilon^{-1}\right)$, is obtained with $p=2$. However, in this case, the computation of $\eta_{p,\Delta_{*}}(x_{k};\Delta_{k})$ and $d_{k}$ requires solving linear problems subject to a quadratic constraint.

\section{Numerical experiments}\label{sec:4}

We performed numerical experiments with Matlab implementations of TRFD. Specifically, two classes of test problems were considered: unconstrained L1 problems (see subsection \ref{sub:l1}) and unconstrained Minimax problems (see subsection \ref{sub:minimax}). We compared TRFD against Manifold Sampling Primal \cite{LM2} and against the derivative-free trust-region method proposed in \cite{grapiglia2016derivative}. For each problem, a budget of 100 simplex gradients was allowed to each solver\footnote{One simplex gradient corresponds to $n+1$ function evaluations, with $n$ being the number of variables of the problem.}. In addition, our implementations of TRFD were 
equipped with the following stopping criteria: $$\Delta_k \leq 10^{-13} \quad \text{or} \quad \eta_{p,\Delta_{*}}(x_k;A_k) \leq 10^{-13}.$$ Implementations are compared using data profiles \cite{more2009benchmarking}\footnote{The data profiles were generated using the code \textit{data\_profile.m}, freely available at the website\\ \url{https://www.mcs.anl.gov/~more/dfo/}.}, where a code $M$ is said to solve a given problem when it reaches $x_M$ such that $$\frac{f(x_0)-f(x_{M})}{f(x_0)-f(x_{Best})} \geq 1-\textit{\text{Tolerance}},$$ where $f(x_{Best})$ is the lowest function value found among all the methods. All experiments were performed with MATLAB (R2023a) on a PC with microprocessor 13-$th$ Gen Intel(R) Core(TM) i5-1345U 1.60 GHz and 32 GB of RAM memory.

\subsection{L1 problems}\label{sub:l1}

Here we considered problems of the form $$\min_{x\in \mathbb{R}^n} \|F(x)\|_1.$$ We tested 53 functions $F:\mathbb{R}^n \to \mathbb{R}^m$ defined by Moré and Wild \cite{more2009benchmarking}, for which $2\leq n \leq 12$ and $2\leq m \leq 65$. The following codes were compared:
\\[0.15cm]
- \textbf{TRFD-L1}: Implementation of TRFD with $p=1$ and parameters $\epsilon=10^{-15}$, $\alpha=0.15$, $L_{h,p}=1$, $\Delta_0=\max\{1, \tau_0\sqrt{n}\}$, $\Delta_{*}=1000$ and $$\sigma=\frac{\epsilon}{L_{h,p}c_{p,2}(m)c_{2,p}(n)\sqrt{n}\sqrt{eps}},$$ where $eps$ is the machine precision, $c_{1,2}(m)=\sqrt{m}$ and $c_{2,1}(n)=1$. The computation of $\eta_{p,\Delta_{*}}(x_{k};\Delta_{k})$ and $d_{k}$ is performed using the MATLAB function \textit{linprog.m}.
\\[0.15cm]
- \textbf{MS-P}: Implementation of Manifold Sampling Primal \cite{LM2}, freely available on \\ GitHub\footnote{\url{https://github.com/POptUS/IBCDFO}.}. The initial parameters are given in the file \textit{check\_inputs\_and\_initialize.m}, while the outer function $h(\,\cdot\,)$ was provided by the file \textit{one\_norm.m}.
\\[0.15cm]
- \textbf{DFL1S}: Implementation of the trust-region method in \cite{grapiglia2016derivative} adapted to the case $h(\,\cdot\,)=\|\,\cdot\,\|_1$.
\\[0.15cm]
Data profiles are shown in Figure \ref{fig:l1_results}. As we can see, in this particular test set, TRFD-L1 outperforms both MS-P and DFL1S, being able to solve a higher percentage of problems within the allowed budget of $100(n+1)$ evaluations of $F(\,\cdot\,)$ across all the tolerances considered.
\begin{figure}[h!]
    \centering
    \subfigure
    {\includegraphics[width=0.33\textwidth]{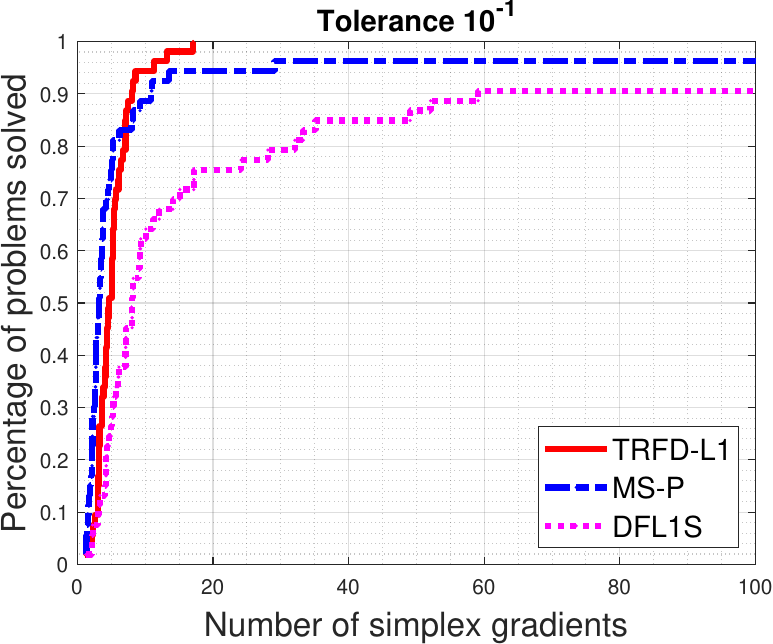}}
    \hspace{5mm}
    \subfigure
    {\includegraphics[width=0.33\textwidth]{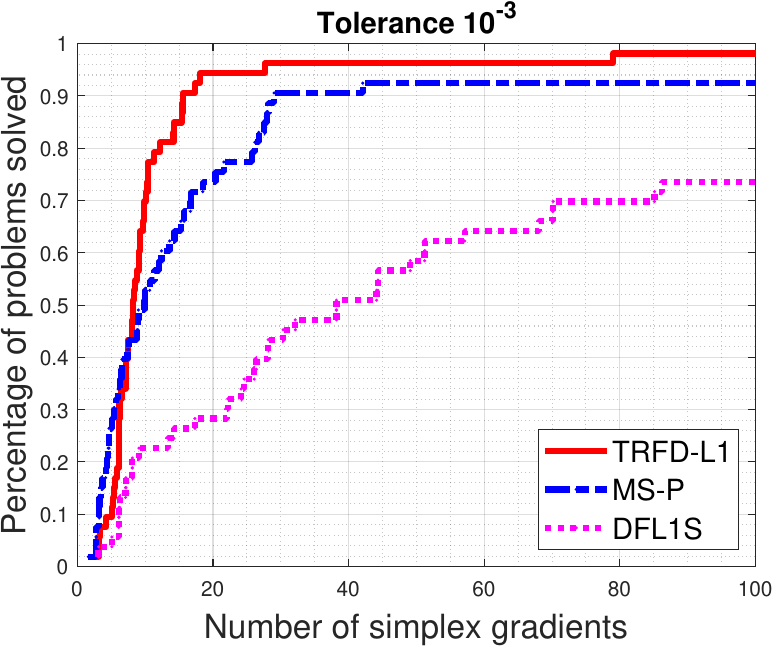}} 
    \subfigure
    {\includegraphics[width=0.33\textwidth]{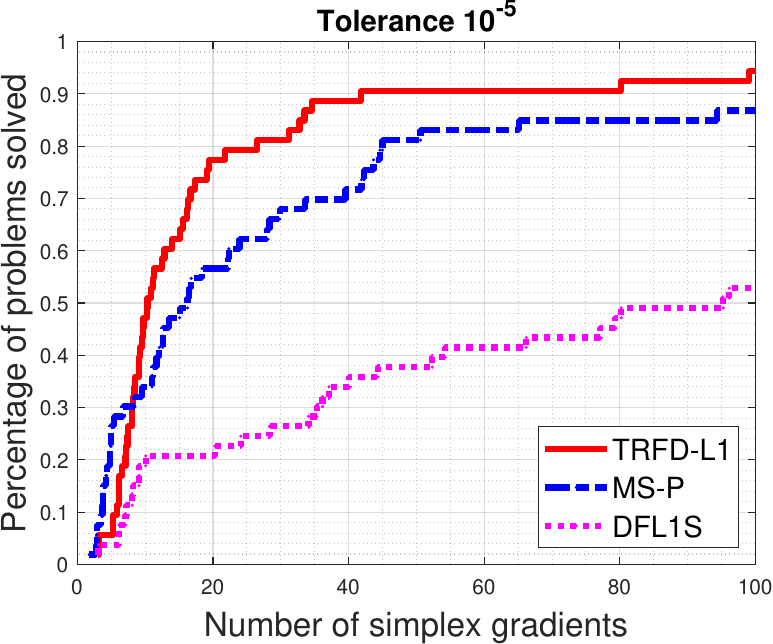}}
    \hspace{5mm}
    \subfigure
    {\includegraphics[width=0.33\textwidth]{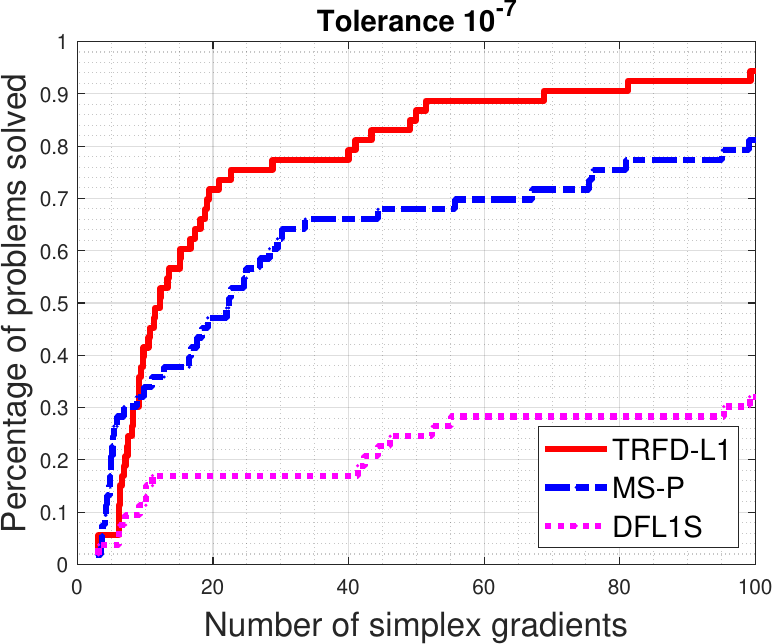}}
    \caption{Data profiles of TRFD-L1, MS-P and DFL1S on L1 problems}
    \label{fig:l1_results}
\end{figure}

\subsection{Minimax problems}\label{sub:minimax}

We also considered problems of the form $$\min_{x\in \mathbb{R}^n} \max_{i=1,...,m}\{F_i(x)\}.$$ We tested 43 functions $F:\mathbb{R}^n \to \mathbb{R}^m$ defined by Lukšan and Vlcek \cite{lukvsan2000test} and Di Pillo et al. \cite{di1993smooth}, for which $2\leq n \leq 50$ and $2\leq m\leq 130$. On these problems, the following codes were compared:
\\[0.15cm]
- \textbf{TRFD-M} Implementation of TRFD with $p=1$ if $\sqrt{m}<n$, and $p=\infty$ if $\sqrt{m}\geq n$. Parameters are the same used in TRFD-L1, with constants $L_{h,1}=1$, $L_{h,\infty}=1$, $c_{1,2}(m)=\sqrt{m}$, $c_{2,1}(n)=1$, $c_{\infty,2}(m)=1$ and $c_{2,\infty}(n)=\sqrt{n}$. Subproblems are solved using the MATLAB function \textit{linprog.m}.
\\[0.15cm]
- \textbf{TRFD-M2} Implementation of TRFD with $p=2$. Parameters are the same used in TRFD-L1, with constants $L_{h,2}=1$, $c_{2,2}(m)=1$ and $c_{2,2}(n)=1$. Subproblems are solved using the MATLAB function \textit{fmincon.m}.
\\[0.15cm]
- \textbf{MS-P}: Implementation of Manifold Sampling Primal \cite{LM2}, with the outer function $h(\,\cdot\,)$ provided by the file \textit{pw\_maximum}. 
\\[0.15cm]
- \textbf{DFMS}: Implementation described in Section 7 of \cite{grapiglia2016derivative}.
\\[0.15cm]
Figure \ref{fig:minimax_results} presents the data profiles comparing TRFD-M, MS-P and DFMS. As shown, TRFD-M and MS-P exhibited similar performances and both outperformed DFMS across all tolerances considered.
\vspace{0.15cm}
\begin{figure}[h!]
    \centering
    \subfigure
    {\includegraphics[width=0.33\textwidth]{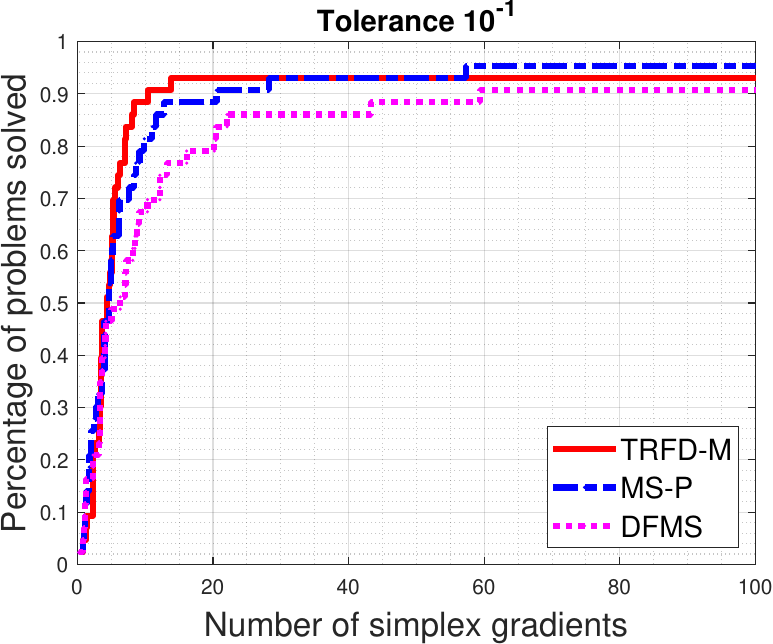}}
    \hspace{5mm}
    \subfigure
    {\includegraphics[width=0.33\textwidth]{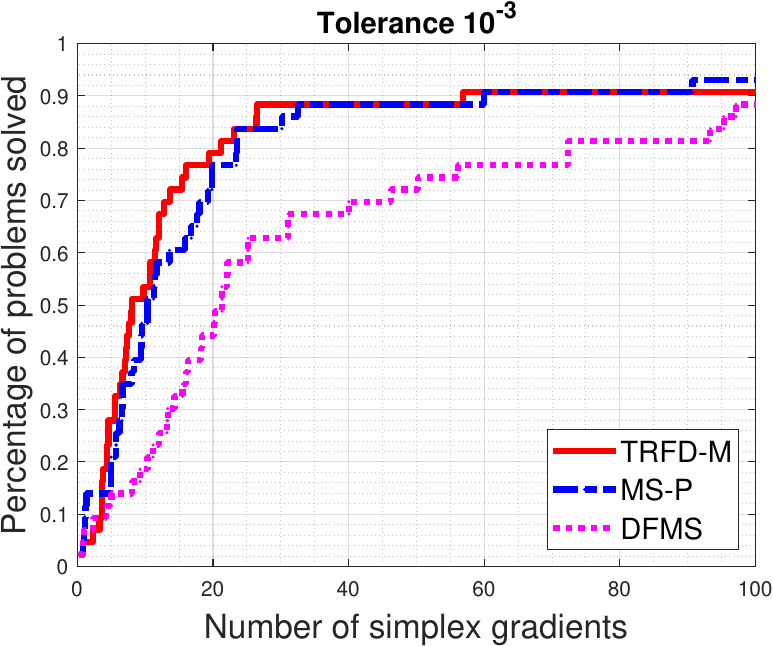}} 
    \subfigure
    {\includegraphics[width=0.33\textwidth]{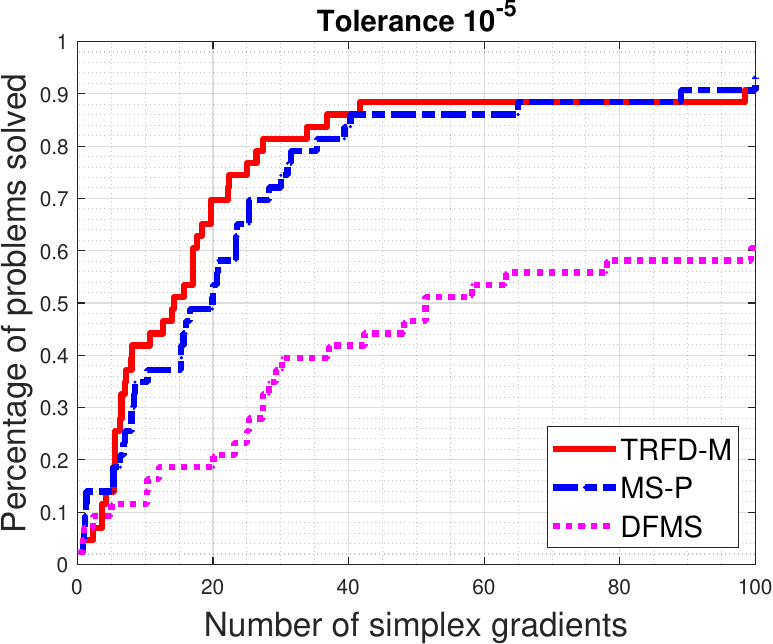}}
    \hspace{5mm}
    \subfigure
    {\includegraphics[width=0.33\textwidth]{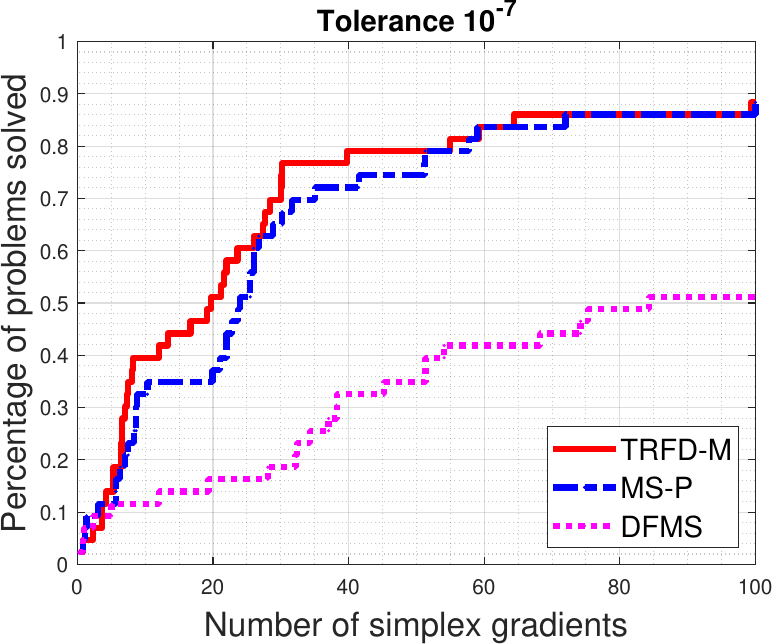}}
    \caption{Data profiles of TRFD-M, MS-P and DFMS on Minimax problems}
    \label{fig:minimax_results}
    \vspace{-3mm}
\end{figure}
\newpage
We also compared TRFD-M2 against MS-P and TRFD-M. The data profiles are shown in Figure \ref{fig:minimax_results_p=2}. For tolerances $10^{-3}$ and $10^{-5}$, TRFD-M2 performed slightly better than MS-P and TRFD-M. However, for tolerance $10^{-7}$, both MS-P and TRFD-M outperformed TRFD-M2.
\vspace{0.2cm}
\begin{figure}[h!]
    \centering
    \subfigure
    {\includegraphics[width=0.33\textwidth]{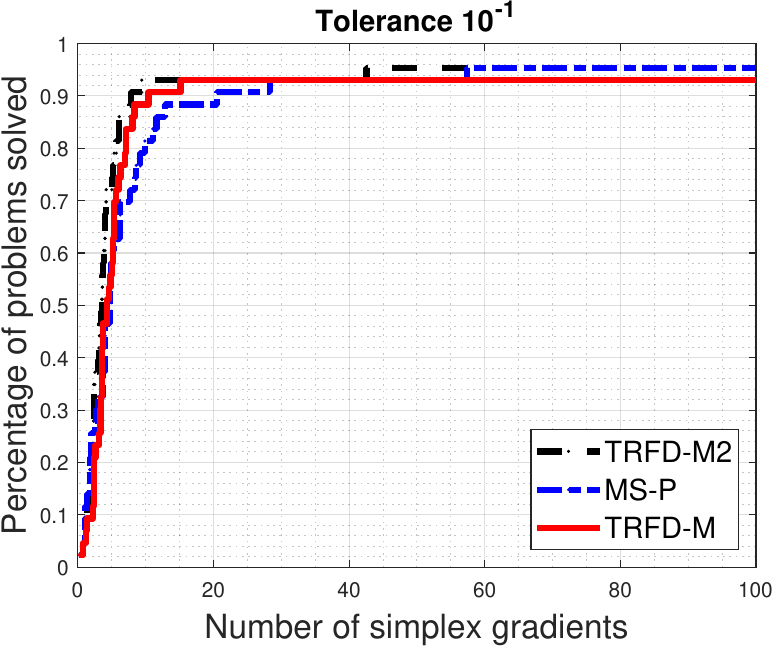}}
    \hspace{5mm}
    \subfigure
    {\includegraphics[width=0.33\textwidth]{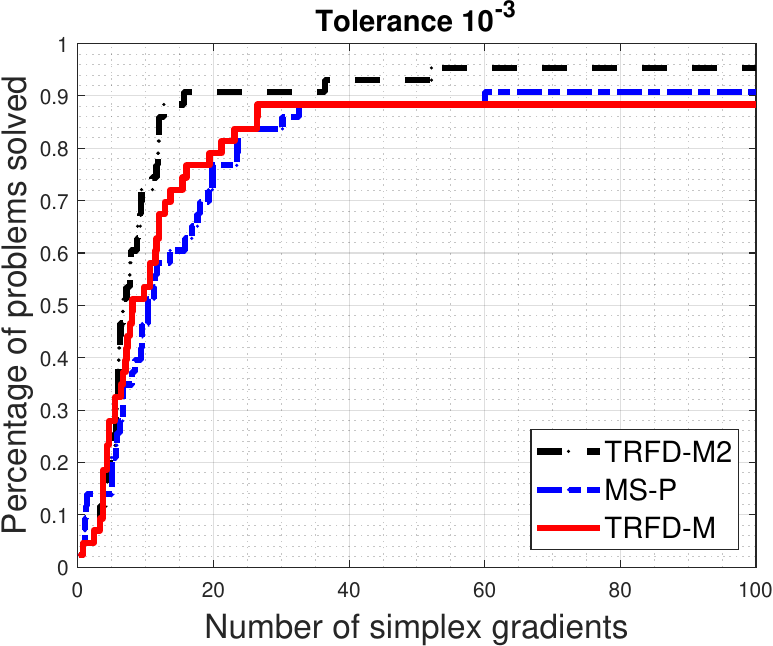}} 
    \subfigure
    {\includegraphics[width=0.33\textwidth]{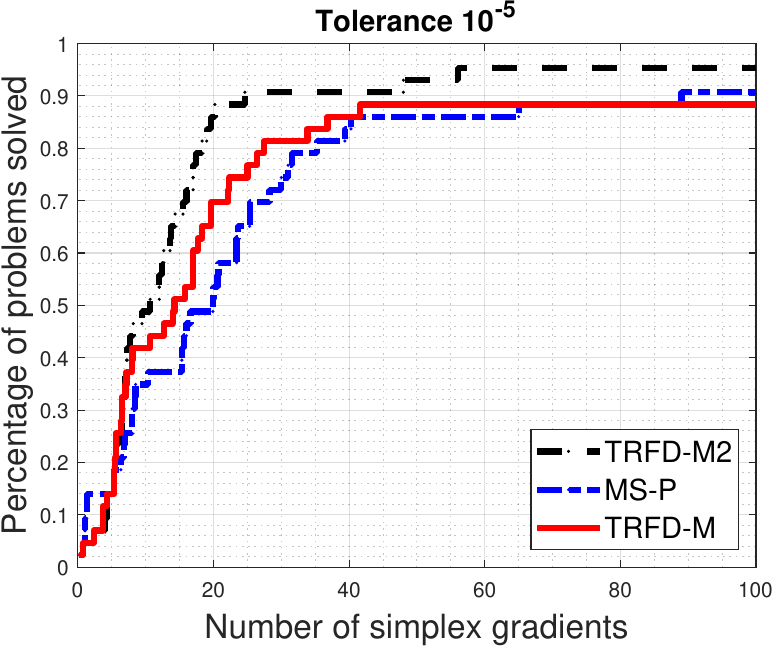}}
    \hspace{5mm}
    \subfigure
    {\includegraphics[width=0.33\textwidth]{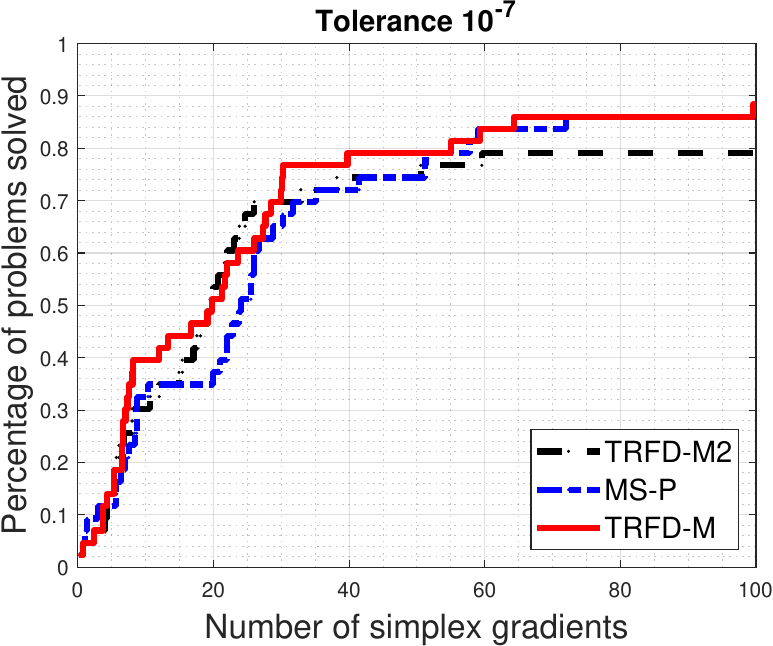}}
    \caption{Data profiles of TRFD-M2, MS-P and TRFD-M on Minimax problems}
    \label{fig:minimax_results_p=2}
\end{figure}
\section{Conclusion}\label{sec:conclusions}
In this paper, we introduced TRFD, a derivative-free trust-region method for minimizing composite functions of the form $f(x)=h(F(x))$ over a convex set $\Omega$. In the proposed method, trial points are obtained by minimizing models of the form $h(M_{k}(x_{k}+d))$ subject to the constraints $\|d\|_{p}\leq\Delta_{k}$ and $x_{k}+d\in\Omega$. Unlike existing model-based derivative-free methods for composite nonsmooth optimization, in which $M_{k}(x_{k}+d)$ is built as a linear or quadratic interpolation model of $F$ around $x_{k}$, TRFD employs $M_{k}(x_{k}+d)=F(x_{k})+A_{k}d$, where $A_{k}$ is an approximation for the Jacobian of $F$ at $x_{k}$, constructed using finite differences defined by a stepsize $\tau_{k}$. Special rules for updating $\tau_{k}$ and $\Delta_{k}$ allowed us to establish improved evaluation complexity bounds for TRFD in the nonconvex case. In particular, for L1 and Minimax problems, we proved that TRFD with $p=1$ and $p=2$ requires no more than $O(n \epsilon^{-2})$ evaluations of $F(\,\cdot\,)$ to find an $\epsilon$-approximate stationary point. Moreover, under the assumptions that $h(\,\cdot\,)$ is monotone and that the components of $F(\,\cdot\,)$ are convex, we established a complexity bound for the number of evaluations of $F(\,\cdot\,)$ that TRFD requires to find an $\epsilon$-approximate minimizer of $f(\,\cdot\,)$ on $\Omega$. For Minimax problems, our bound reduces to $O(n \epsilon^{-1})$ when we use $p=1$ or $p=2$ in TRFD. We concluded by presenting numerical results comparing implementations of TRFD against two model-based derivative-free trust-region methods, namely, Manifold Sampling \cite{LM2} and the derivative-free method from \cite{grapiglia2016derivative}. For L1 problems, TRFD outperformed the other two solvers, while for Minimax problems, TRFD demonstrated a competitive performance with Manifold Sampling.

\section*{Acknowledgments}
We are grateful to Jeffrey Larson for interesting discussions about composite optimization and for his assistance with the MS-P code.

\bibliographystyle{siamplain}

\end{document}